\mathchardef\mhyphen="2D
\theoremstyle{plain}
\newtheorem{theorem}{Theorem}[section]
\newtheorem{prop}[theorem]{Proposition}
\newtheorem{lemma}[theorem]{Lemma}
\newtheorem{cor}[theorem]{Corollary}
\newcommand{\Brt}{{}_2\mathrm{Br}}
\newtheorem*{nonu-theorem}{Theorem}
\newtheorem{notation}[theorem]{Notation}
\theoremstyle{definition}
\theoremstyle{remark}
\newcommand{\sheaf}[1]{\mathscr{#1}}
\newcommand{\OO}{\mc{O}}
\renewcommand{\AA}{\sheaf{A}}
\newcommand{\PP}{\sheaf{P}}
\newcommand{\XX}{\sheaf{X}}
\newcommand{\YY}{\sheaf{Y}}
\newcommand{\mc}[1]{\mathcal #1}
\DeclareFontFamily{U}{wncy}{}
    \DeclareFontShape{U}{wncy}{m}{n}{<->wncyr10}{}
    \DeclareSymbolFont{mcy}{U}{wncy}{m}{n}
    \DeclareMathSymbol{\Sha}{\mathord}{mcy}{"58}
\begin{document}

\title[ Local-global principle]
{ Local-global principle for  hermitian  spaces over semi global fields}

\author{Jayanth Guhan} 

\begin{abstract}   Let $K$ be a  complete discrete valued field with residue field $k$
and $F$ the function field of a curve over $K$. Let $A \in {}_2Br(F)$ be  a central simple algebra with an involution 
$\sigma$ of any kind and $F_0 =F^{\sigma}$.  Let $h$ be an hermitian space over $(A, \sigma)$ and 
 $G = SU(A, \sigma, h)$ if $\sigma$ is of  first kind and $G = U(A, \sigma, h)$ if $\sigma$ is of second kind.
 Suppose that char$(k) \neq 2$ and ind$(A)\leq 4$.  Then we  prove that 
 projective  homogeneous spaces under $G$ over $F_0$ satisfy a local-global principle for rational points
with respect to discrete valuations of $F$. 
\end{abstract} 
 
\maketitle

Let $K$ be a complete discrete valued field and $F$ the function field of a curve over $K$.
Let  $\Omega$ be  a set of discrete valuations of $F$.  For $\nu \in F$, let $F_\nu$ denote the 
completion of $F$ at $\nu$. Let $G$ be a connected linear algebraic group over $F$ and 
 $X$ a projective homogeneous variety  under $G$   over $F$.
We say that a {\it local-global principle}  holds for $X$ with respect to $\Omega$
if $X(F_\nu) \neq \emptyset$ for all $\nu \in \Omega$ implies $X(F) \neq \emptyset$. 

If $K$ is a $p$-adic field, $F$ the function field of a curve over $K$
and $G$ a connected linear algebraic group which of classical type with characteristic of the residue field $k$ a
'good' prime for $G$, then local-global principle holds for projective homogeneous spaces under $G$ over $F$ with 
respect to  the set $\Omega_F$ of all 'divisorial' discrete valuations (\cite{patchingLGPpadic}, \cite{reddy}, \cite{wu},
 \cite{parimala2020localglobal}).
The aim of this paper is  to  obtain analogues  results for certain unitary groups  if $K$ is a complete discrete 
valued field with any residue field. 

Suppose $K$ is any complete discrete valued field with residue field $k$. 
Let $q$ be a quadratic form over $F$ of rank at least 3 and $G = SO(q)$. 
If char$(k) \neq 2$, 
then Colliot-Th\'el\`ene-Parimala-Suresh  proved that local-global principle holds for 
projective homogeneous spaces under $G$ over $F$ with respect to $\Omega_F$ (\cite[Theorem 3.1]{patchingLGPpadic}). 
Let $A$ be a central simple algebra over $F$ of degree $n$ and $G = PGL_1(A)$. 
If $n$ is coprime to char$(k)$ and $k$ contains a primitive $n^{\rm th}$ root of unity,
then Reddy-Suresh proved that local-global principle holds for 
projective homogeneous spaces under $G$ over $F$ with respect to $\Omega_F$ (\cite[Theorem 2.6]{reddy}).
 
 Let $A \in {}_2Br(F)$ be a central simple algebra over $F$ with an involution $\sigma$ and $h$ a  hermitian form 
 over $(A, \sigma)$ and $G = SU(A, \sigma, h)$ if $\sigma$ is of first kind and $G = U(A, \sigma, h)$ if $\sigma$ is of second kind.
 Suppose that char$(k) \neq 2$. 
 A theorem of Wu (\cite[Theorem 1.2]{wu}) asserts that if   ind$(A) \leq 2$ or for every finite extension 
 $\ell/k$, $\mid \ell^*/\ell^{*2} \mid \leq 2$, 
 then local-global principle holds for 
projective homogeneous spaces under $G$ over $F$ with respect to $\Omega_F$. 
Since for every finite field ${\mathbb F}$ of characteristic not 2,  $\mid {\mathbb F}^*/{\mathbb F}^{*2} \mid = 2$, 
  if $K$ is a $p$-adic field with $p \neq 2$, then  local-global principle holds for 
projective homogeneous spaces under $G$ over $F$ with respect $\Omega_F$.

These results are in the direction of a Hasse principle for projective homogeneous spaces over general semiglobal fields. 
We however point out that a Hasse principle for principal  or projective homogeneous spaces over function fields of $p$-adic curves is 
a conjecture in (\cite{patchingLGPpadic}) and proved in many cases (cf.  \cite{parimala2020localglobal}).
There are  examples  to show  that a Hasse principle may fail for principal homogeneous space for arbitrary semiglobal field
(\cite{colliotthélène2021localglobal}, \cite{tori}).

In this paper we prove the following (cf. \ref{main-theorem1}, \ref{main-theorem2}).

 \begin{theorem}  
  Let $K$ be a complete discretely valued field with valuation ring $T$ and residue field $k$.
 Suppose that char$(k) \neq 2$. Let $F $ be the function field of a smooth projective geometrically 
 integral curve over $K$.   
 Let $A \in {}_2Br(F)$ be a central simple algebra over $F$ with an involution $\sigma$ of any kind, $F_0 = F^\sigma$
  and $h$ a hermitian form over $(A, \sigma)$. 
   Let $G = SU(A, \sigma, h)$ if $\sigma$ is first kind or 
 $U(A,  \sigma, h)$ if $\sigma$ is of second kind.  Suppose that one of the following holds:\\
 i) ind$(A) \leq 4$ \\
 ii) for every finite extension $\ell/k$,   every element  $D \in {}_2Br(\ell)$ has index at most 2. 
  Let $X$ be a projective homogeneous variety under $G$ over $F_0$.
 If $X(F_{0\nu}) \neq \emptyset$ for all divisorial  discrete valuations $\nu$ of $F $, then $X(F_0 ) \neq \emptyset$.  
 \end{theorem}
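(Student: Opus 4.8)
The plan is to reduce the general statement to the two known input theorems -- Wu's theorem (\cite[Theorem 1.2]{wu}) for the case $\mathrm{ind}(A)\le 2$, and the Colliot-Th\'el\`ene--Parimala--Suresh result (\cite[Theorem 3.1]{patchingLGPpadic}) for orthogonal groups -- by a case analysis on the kind of the involution $\sigma$ and the index of $A$. Case (ii) is immediate from Wu's theorem, so the content is in establishing (i), i.e. handling $\mathrm{ind}(A)=4$ (the cases $\mathrm{ind}(A)\in\{1,2\}$ being again covered by Wu). First I would set up the Witt-index stratification: a projective homogeneous variety $X$ under $G=SU(A,\sigma,h)$ or $U(A,\sigma,h)$ corresponds to a conjugacy class of parabolics, hence to a type of isotropic flag in the hermitian space $(A,\sigma,h)$; having a rational point on $X$ is equivalent to the existence of an isotropic subspace (or flag) of a prescribed type. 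The standard dévissage is: $X$ has a point over a field $E$ iff $h_E$ has Witt index at least some $r$ together with a condition on the anisotropic kernel, and one argues by induction on $r$, peeling off hyperbolic planes one at a time, so that at each stage one is reduced to a local-global statement for isotropy of a single hermitian form over $(A,\sigma)$ -- equivalently, a rational point question for the projective quadric or involution variety attached to $h$.

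The crux is therefore a local-global principle for isotropy of hermitian forms over $(A,\sigma)$ with $\mathrm{ind}(A)\le 4$ over the semiglobal field $F_0$. Here the mechanism is to pass from the hermitian form to an associated quadratic form via a Scharlau-type or Jacobson-type trace-form/transfer construction, using that $\sigma$ has a well-understood structure when $\mathrm{ind}(A)\le 4$: when $\sigma$ is of the first kind, $A$ is (over $F_0=F$) Brauer-equivalent to a quaternion or biquaternion algebra, and a hermitian or skew-hermitian form over such an algebra with involution is, by the classical correspondence (Jacobson, and its refinements for biquaternion algebras), detected by an explicitly bounded-dimension quadratic form $q_h$ over $F$ (or over a quadratic étale extension, when $\sigma$ is of the second kind). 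Isotropy of $h$ then becomes isotropy of $q_h$, up to controlled correction terms. The point of the hypothesis $\mathrm{ind}(A)\le 4$ is exactly to keep this associated quadratic form in a range where the orthogonal local-global principle of \cite[Theorem 3.1]{patchingLGPpadic} -- which has no restriction on $k$ beyond $\mathrm{char}(k)\ne 2$ -- applies to $SO(q_h)$; for $\mathrm{ind}(A)\ge 8$ the transfer produces forms whose isotropy is not controlled by $\mathrm{char}(k)\ne2$ alone, which is why the theorem is only claimed in this range.

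The main obstacle I anticipate is the bookkeeping in the second-kind case and in handling the \emph{outer} part of the flag variety: for $U(A,\sigma,h)$ with $\sigma$ of the second kind, $F_0$ is a field with $F/F_0$ quadratic, $F$ is itself a semiglobal field, and one must check that ``divisorial discrete valuations of $F$'' restrict appropriately and that the transfer $F/F_0$ interacts correctly with completions; one needs that the places of $F_0$ one must test are exactly (the restrictions of) the divisorial places of $F$, so that local solvability hypotheses transfer without loss. A second delicate point is that the projective homogeneous variety may be of a type (e.g. corresponding to a maximal isotropic subspace, where the variety of maximal isotropic subspaces can break into two components governed by the discriminant/Clifford algebra) whose rational points are not literally ``isotropic flag of type $r$'' but carry an extra Brauer-theoretic or discriminant obstruction; one must verify that this obstruction also satisfies a local-global principle over $F_0$, which reduces to the local-global principle for the Brauer group / for $\mu_2$-torsors over semiglobal fields, a known ingredient in \cite{patchingLGPpadic}. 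Once these compatibilities are in place, the induction on Witt index closes and the theorem follows.
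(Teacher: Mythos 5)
Your proposal takes a genuinely different route from the paper, and it has a gap that I don't think can be closed as stated.

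The paper does \emph{not} reduce to the orthogonal local-global principle via a Jacobson/Scharlau trace-form transfer. Its mechanism is the Harbater--Hartmann--Krashen patching theorem \cite[Theorem 3.7]{HHK-quadratic}, applied directly to the variety $X$ over $F_0$ using the rationality of $G$. The heart of the argument is a local analysis over two-dimensional complete regular local rings: one writes the division algebra $D$ Brauer-equivalent to $A$ as an explicit tensor product of quaternion algebras (following Saltman) whose slots are units or products of units with $\pi,\delta$ (Propositions~\ref{disc-2dim}, \ref{disc-2dim-second}, \ref{deg8}), then constructs explicit maximal $R$-orders $\Gamma$ of the form $\Gamma_0\otimes R(\cdot,\cdot)\otimes\cdots$ (Corollary~\ref{maximal-order-2dim}), verifies that $h$ can be diagonalized with entries in $\Gamma$ of controlled reduced norm, and invokes a local criterion (Proposition~\ref{local2}, Corollary~\ref{local-results-1}) descended from Wu's work showing that local nonemptiness at a single branch $\pi$ or $\delta$ already forces $X(F_{0P})\neq\emptyset$. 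Blowups (Section~\ref{blowups}) are used to arrange the normal-crossings and structure conditions at all closed points, and then patching finishes. The index $\leq4$ (resp.\ $\leq 8$ under hypothesis~(ii)) hypothesis is there to make the Saltman-style decomposition of $D$ into quaternion factors possible via the Albert/commutant argument of Lemma~\ref{overk}, not because of any range restriction in the orthogonal local-global principle of \cite{patchingLGPpadic}, which works for quadratic forms of any rank $\geq 3$.

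The concrete gap in your proposal: the transfer you invoke does not exist in the form you describe once $\mathrm{ind}(A)=4$. Jacobson's correspondence between (skew-)hermitian forms and quadratic forms is special to quaternion algebras. For biquaternion algebras with involution (of either kind), there is no clean dimension-bounded quadratic form $q_h$ whose isotropy is equivalent to that of $h$, and, even leaving isotropy aside, there is certainly no functorial identification of projective homogeneous varieties under $SU(A,\sigma,h)$ with those under $SO(q_h)$ that would let you import the orthogonal theorem wholesale. Your stated reason for the index restriction (``to keep this associated quadratic form in a range where the orthogonal local-global principle applies'') therefore doesn't match either the mathematics or the actual bottleneck. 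You would also need the patching theorem somewhere — the CTPS theorem you cite is itself a consequence of patching, and there is no known reduction of the hermitian/unitary case to the orthogonal case over arbitrary residue fields that sidesteps a fresh patching argument. The Witt-index dévissage idea and the remarks on discriminant/Clifford obstructions and on divisorial places of $F$ versus $F_0$ are reasonable things to keep in mind, but the central reduction you rely on is not available.
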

 
 As a consequence we get the following (\ref{springer}). 
 
 \begin{cor}
 Let $K$ be a complete discretely valued field with valuation ring $T$ and residue field $k$.
 Suppose that  $k$ is a local field   with  the characteristic of the  residue field   not equal  2. 
 Let $F $ be the function field of a smooth projective geometrically 
 integral curve over $K$.   
 Let $A \in {}_2Br(F)$ be a central simple algebra over $F$ with an   involution $\sigma$   and
 $h$ a hermitian form over $(A, \sigma)$ and $F_0 =  F^\sigma$.   
If $h \otimes L$ is isotropic for some odd degree extension $L/F_0$, then $h$ is isotropic. 
 \end{cor}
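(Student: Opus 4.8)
The plan is to deduce the corollary from the Theorem above, by recognizing isotropy of $h$ as the existence of a rational point on a projective homogeneous $G$-variety and then performing a Springer-type descent one valuation at a time. Write $(V,h)$ for the underlying hermitian $(A,\sigma)$-module; we may assume $V$ has rank at least $3$, the lower-rank cases being elementary (a rank-one form is anisotropic over every extension, so the hypothesis is vacuous, and for rank two hyperbolicity is governed by an invariant of degree at most $2$, which cannot become trivial over an odd-degree extension without being trivial over the base). Let $X$ be the variety of rank-one totally isotropic $A$-submodules of $V$; this is a projective homogeneous variety under $G$ over $F_0$, and for every field extension $E/F_0$ one has $X(E)\neq\emptyset$ if and only if $h\otimes E$ is isotropic. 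In particular $h$ is isotropic over $F_0$ precisely when $X(F_0)\neq\emptyset$.

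The next step is to check that the hypotheses of the Theorem hold for this $X$. Since $k$ is a local field, so is every finite extension $\ell/k$; hence every class in ${}_2\mathrm{Br}(\ell)$ is split by a quadratic extension and has index at most $2$, so alternative (ii) of the Theorem is satisfied. Consequently the Theorem reduces the corollary to showing that $h\otimes F_{0\nu}$ is isotropic for every divisorial discrete valuation $\nu$ of $F$.

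So fix such a $\nu$ and work over the complete discretely valued field $F_{0\nu}$, whose residue characteristic is $\neq 2$. Replacing $L$ by its separable closure over $F_0$ (purely inseparable base change does not affect isotropy, and the degree stays odd), we may assume $L/F_0$ is separable, so that $L\otimes_{F_0}F_{0\nu}\cong\prod_i L_i$ is a finite product of finite separable field extensions $L_i/F_{0\nu}$ with $\sum_i[L_i:F_{0\nu}]=[L:F_0]$ odd. Hence $[L_1:F_{0\nu}]$ is odd for some index $i=1$, and composing $L\hookrightarrow L\otimes_{F_0}F_{0\nu}\twoheadrightarrow L_1$ gives a field embedding over which an isotropic vector of $h\otimes L$ pushes forward to one of $h\otimes L_1=(h\otimes F_{0\nu})\otimes_{F_{0\nu}}L_1$. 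Thus $h\otimes F_{0\nu}$ becomes isotropic over the odd-degree extension $L_1$, and it remains to descend this isotropy to $F_{0\nu}$ itself. That is exactly Springer's theorem for hermitian forms over the field $F_{0\nu}$; I would either cite the general hermitian Springer theorem (Bayer-Fluckiger and Lenstra), or, to keep the argument elementary, transfer to an associated quadratic form over $F_{0\nu}$ whose isotropy is equivalent to that of $h\otimes F_{0\nu}$ and apply the classical Springer theorem for quadratic forms.

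I expect two main obstacles. The first is making the translation ``$h$ isotropic $\iff X(F_0)\neq\emptyset$'' precise: one must confirm that $X$ is homogeneous under $G$ itself, rather than merely under the full unitary or similitude group, and must handle the degenerate low-rank cases without circularity. The second, and the only input not already furnished by the Theorem, is the Springer-type descent for hermitian forms at the completions $F_{0\nu}$: while classical for quadratic forms, for hermitian forms over a central simple algebra with an involution of either kind it must be quoted, or derived via a transfer, with care.
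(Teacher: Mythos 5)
Your plan — reduce to completions via the local–global Theorem, split $L\otimes_{F_0}F_{0\nu}$ into an odd‑degree factor, then descend by a Springer theorem at each completion — matches the broad shape of the paper's argument (which quotes the proof scheme of Wu's Theorem 5.8 together with Corollary~\ref{cor1}). But the pivotal step, namely a Springer-type odd-degree descent of isotropy for hermitian forms over the complete field $F_{0\nu}$, is not adequately supported by either of your two proposed routes, and this is exactly the point at which the paper has to work harder than you do.

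First, the Bayer--Fluckiger--Lenstra theorem gives \emph{isomorphism} descent (equivalently, injectivity of $H^1(F_0,G)\to H^1(L,G)$ for reductive $G$ and odd-degree $L/F_0$). That does not by itself give \emph{isotropy} descent: if $h$ is anisotropic over $F_0$ and $h\otimes L\cong\mathbb H\perp h'$ with $h'$ a form over $A_L$, you cannot invoke BFL unless $h'$ is already known to descend to $F_0$, which is precisely what you do not know. Second, your fallback via a transfer to a quadratic form works cleanly only when $\mathrm{Sym}(A,\sigma)$ is one-dimensional over $F_0$, i.e.\ essentially for quaternion algebras with the canonical involution; for orthogonal involutions, unitary involutions, and the index $4$ or $8$ division algebras that genuinely arise here (and that the paper is built to handle), $h(v,v)$ lives in a higher-dimensional space of symmetric elements and isotropy of the associated trace form is not equivalent to isotropy of $h$. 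The paper circumvents precisely this difficulty: it identifies the residue field $\kappa(\nu)$ as either a finite extension of $K$ or a function field of a curve over a finite extension of $k$, invokes Wu's Lemma 5.1, Lemma 5.6 and Theorem 4.4 to establish a Springer property for hermitian forms over those \emph{residue} fields, and then lifts through the complete discrete valuation to $F_{0\nu}$ and finally applies the local–global principle, as in Wu's Theorem 5.8. In other words, the paper descends one further step (residue field $\rightarrow$ completion $\rightarrow$ global) precisely because a completely general hermitian Springer theorem over $F_{0\nu}$ is not an off-the-shelf fact. If you wish to keep your more direct route you would have to supply a bona fide reference or proof of odd-degree isotropy descent for hermitian forms over $(A,\sigma)$ of the relevant indices over $F_{0\nu}$ — citing BFL or a trace-form transfer does not do this.

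Two smaller points. Your reduction to separable $L$ claims ``purely inseparable base change does not affect isotropy''; that requires an argument (and works in the direction you need because odd-degree inseparable extensions are covered by the same Springer-type statements you are trying to establish — so there is a mild circularity as phrased). And your low-rank discussion is essentially right, though the cleanest justification for rank one is that an odd-degree extension cannot lower $\mathrm{ind}(A)$ (since $\mathrm{ind}(A)$ is a $2$-power), so $A_L$ stays division and rank-one forms over division algebras are anisotropic.
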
 

The main technique in   the proof of the main theorem  is the patching technique of Harbater, Hartman and Krashen 
(\cite[Theorem 3.7]{HHK-quadratic}) and the existence of maximal orders over two dimensional complete regular local rings  for some suitable division algebras. 
We now give a brief description of  the structure of paper. 
In  \S(\ref{prel}),  we recall the patching set up and some known results. 
Let $R$ be a 2-dimensional complete regular local ring  with residue field $k$, maximal ideal $(\pi, \delta)$
 and field of fractions $F$.
Suppose that char$(k) \neq 2$. Let $D $  be a division algebra over $F$ which is unramified on $R$ except possibly at 
$(\pi)$ and $(\delta)$. 
If $D$ has an involution of any kind, 
in \S \ref{disc} and  \S\ref{disc2}, following Saltman (\cite[Theorem 2.1]{saltman-cyclic} \& \cite[Theorem 1.2]{saltman-divison}), we give a description of $D$ if ind$(D) \leq 4$ or  
 if for every finite extension $\ell$ of $k$, every element of $_2Br(\ell)$ has index at most 2.
 In \S \ref{maximal-order}, we produce some suitable maximal $R$-orders for such algebras. 
 In \S \ref{lgp-local}, under some assumptions on the existence of maximal $R$-orders in $D$, we show that 
 hermitian spaces satisfy a certain local-global principle.  In \S\ref{blowups}, we study the behavior of the maximal $R$-orders
 constructed in \S \ref{maximal-order} under blowups.   Finally using all these we prove the main results in \S\ref{main}.

\section{Preliminaries}
\label{prel} 

In this section we recall some basic facts about hermitian forms (\cite[Chapter 7 \& 8]{scharlau}), 
projective homogeneous spaces  under  unitary groups  (\cite{merkurjevflagvar}, \cite{merkurjevflagvar2}) and patching techniques (\cite{HHK-quadratic}). 

Let $F$ be a field and $A$  a central simple algebra with an involution $\sigma$.
Let $F_0 = F^{\sigma} = \{ a \in F\mid \sigma(a) = a \}$. Then $[F : F_0] \leq 2$.
If $F = F_0$, then $\sigma$ is  called an involution of the \textit{first kind} if $[F : F_0] = 2$, then 
$\sigma$ is   called an involution of the \textit{second kind}. 
We  also say that the involution $\sigma$ is an $F/F_0$-involution.  
Let $h$ be an hermitian form over $(A, \sigma)$.
Let $G(A, \sigma, h) = SU(A, \sigma, h)$ if $F = F_0$ and $G(A, \sigma, h) = U(A, \sigma, h)$ if $[F : F_0] = 2$.
Then $G(A, \sigma, h)$ is a connected  linear algebraic group over $F_0$ which is rational (\cite[Lemma 5]{chernusov}). 

By a theorem of Wedderburn (\cite[Chapter 8, Corollary 1.6 \& Theorem 1.9]{scharlau}), we have $A \simeq M_n(D)$ for a central division algebra over $D$ over $F$.
 Since  $A$ has a  $F/F_0$-involution, $D$ has a  $F/F_0$-involution (\cite[Chapter 8, Corollary 8.3]{scharlau}). 
 Let $\sigma_0$ be a $F/F_0$-involution on $D$. Then, by Morita equivalence (\cite[Chapter 1, 9.3.5]{knus-hermitian}), $h$ corresponds to a hermitian 
 $h_0$ over $(D, \sigma_0)$.

Let $X$ be a projective homogeneous space under $G(A, \sigma, h)$ over $F_0$.
Then $X$ corresponds to a projective homogeneous space $X_0$  under $G(D, \sigma_0, h_0)$ over $F_0$ such that 
for any extension $L/F_0$,  $X(L) \neq \emptyset$ if and only if $X_0(L) \neq \emptyset$ (\cite[Section 5]{merkurjevflagvar}, \cite[Section 9]{merkurjevflagvar2}, \cite[Theorem 16.10]{karpenko}). 
Hence the study of rational points on   projective homogeneous space under $G(A, \sigma, h)$  reduces to 
the study of rational points on   projective homogeneous space under $G(D, \sigma_0, h_0)$.  
Suppose   $\sigma'$ is an involution on  $A$ of the same kind as $\sigma$. Then $\sigma' = $ int$(u) \sigma$ for some 
$u \in A^*$ and $h' = uh$ is an hermitian form over $(A, \sigma')$. Further  $X$ corresponds to a 
projective homogeneous space $X'$ under $G(A,\sigma', h')$  over $F_0$ such that 
for any extension $L/F_0$,  $X(L) \neq \emptyset$ if and only if $X'(L) \neq \emptyset$. 
For more details on the structure of projective homogeneous space under $G(A, \sigma, h)$, we refer to 
(\cite{merkurjevflagvar}, \cite{merkurjevflagvar2}).

Let $\XX$ be a regular integral scheme with function field $F$. 
For a point $P$ of $\XX$, let  $\OO_P$ denote the local ring at $P$ on $\XX$, 
$\hat{\OO}_P$ the completion of the local ring $\OO_P$.  
Let $A$ be a central simple algebra over $F$.  Let $P \in \XX$.
We say that $A$ is {\it unramified}  at $P$, if there exists an Azumaya algebra  $\AA$ 
over the local ring $\OO_P$ at $P$ such that $\AA \otimes F \simeq A$. 
If $A$ is not unramified at $P$, we say that $A$ is {\it ramified } at $P$.

Let $T$ be a  complete discrete  valuation ring   with residue field $k$ and field of fractions $K$. 
Let $F$ be the function field of a smooth projective  geometrically integral   curve over $K$.
A two dimensional projective scheme $\XX \to $ Spec$(T)$ with function field $F$ is  called a {\it model} of $F$. 
The fibre $X_0$ over the closed point of Spec$(T)$ is called the {\it closed fibre} of $\XX$. 

Let $\XX$  be  a  model  of $F$ with $X_0$ its closed fibre.  
Every codimension one point $x$ of $\XX$ gives a discrete valuation  $\nu_x$ on $F$.
A discrete   valuation  of $F$ is called a {\it divisorial discrete valuation} of $F$, if it is given by a codimension 
one point of a model of  $F$.   For any discrete valuation $\nu$ of $F$, we denote the 
completion of $F$ at $\nu$ by $F_\nu$. 

Let $A$ be a central simple algebra over $F$.
The {\it ramification divisor}  ram$_\XX(A)$ of $A$ on $\XX$ is
 the union of codimension one points of $\XX$ where $A$ is ramified. 
 Note that there are only finitely many codimension one point of $\XX$ where $A$ ramified.  

For any point $P \in \XX$, let $\hat{\OO}_P$ be the   completion of the local ring 
$\OO_P$ at $P$ on $\XX$ and $F_P$ the field of fractions of $\hat{\OO}_P$. 
Let $U \subset X_0$ be a nonempty open subset of the nonsingular points 
of  $X_0$ which is properly contained in an irreducible component of 
$X_0$. Let $R_U$ be the set of $f \in F$ which are regular at every point of $U$. Then $T \subset R_U$. 
Let $t \in T$ be a parameter and  $\hat{R}_U$ be  the $(t)$-adic completion of $R_U$.
Let $F_U$ be the field of fractions of $\hat{R}_U$.  For any nonempty subset $V$ of $U$, we have 
$F_U \subset F_V$.
For any point $P \in U$, we have $F_U \subset  F_P$. For more details about $F_U$ and $F_P$, we refer to \cite{HHK-quadratic}.

We record the following theorem  from
 (\cite[Proposition 5.8]{HHK-torsors}).

\begin{theorem} 
\label{finite} Let $T$ be a  complete discrete  valuation ring   with residue field $k$ and field of fractions $K$. 
Let $F$ be the function field of a smooth projective  geometrically integral   curve over $K$ and  $\XX  $  a  model  of $F$
with $X_0$ its closed fibre.  Let $Y$ be a variety over $F$. Suppose that $Y(F_\nu) \neq \emptyset$ for all 
divisorial discrete valuations $\nu$ of $F$.  For every irreducible component $C$ of $X_0$, there exists a nonempty 
proper open subset $U$ of $C$ such that $Y(F_{U}) \neq \emptyset$. In particular there exists a finite subset 
$\PP$ of  closed points of $X_0$ such that $Y(F_{P}) \neq \emptyset$ for all $P \in X_0 \setminus \PP$. 
\end{theorem}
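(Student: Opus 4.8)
The plan is to follow the proof of \cite[Proposition 5.8]{HHK-torsors}; there are two assertions, and the second follows formally from the first.

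\emph{The ``in particular'' clause.} Suppose that for each irreducible component $C$ of $X_0$ we have found a nonempty open subset $U_C$ of $C$, properly contained in $C$ and contained in the nonsingular locus of $X_0$, with $Y(F_{U_C})\neq\emptyset$. Since $F_{U_C}\subseteq F_P$ for every closed point $P$ of $U_C$, we get $Y(F_P)\neq\emptyset$ for all such $P$. The complement $\PP:=X_0\setminus\bigcup_C U_C=\bigcup_C(C\setminus U_C)$ is a finite set of closed points, because each $C$ is an irreducible curve and each $U_C$ is a nonempty open subset of it; and $Y(F_P)\neq\emptyset$ for all $P\in X_0\setminus\PP$. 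So it suffices to produce the open sets $U_C$.

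\emph{Spreading out a point at the generic point of a component.} Fix an irreducible component $C$ of $X_0$ with generic point $\eta_C$. Since $\eta_C$ is a codimension one point of the model $\XX$, the valuation $\nu_{\eta_C}$ it defines on $F$ is divisorial, and $\hat{\OO}_{\XX,\eta_C}$ is a complete discrete valuation ring with residue field $k(C)$ and fraction field $F_{\nu_{\eta_C}}$; by hypothesis $Y(F_{\nu_{\eta_C}})\neq\emptyset$. I would spread $Y$ out to a proper $\XX$-scheme: assuming first that $Y$ is projective over $F$ (as it will be in the applications, where $Y$ is a projective homogeneous space --- for general $Y$ one works with a compactification and checks that the point produced lies in $Y$, as in \cite{HHK-torsors}), take $\YY$ to be the scheme-theoretic closure of $Y$ in some $\mathbb{P}^N_\XX$, so that $\YY\to\XX$ is projective with $\YY\times_\XX\Spec F=Y$. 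The valuative criterion of properness, applied over the complete discrete valuation ring $\hat{\OO}_{\XX,\eta_C}$, turns a point of $Y$ over $F_{\nu_{\eta_C}}$ into a section of $\YY\to\XX$ over $\Spec\hat{\OO}_{\XX,\eta_C}$. Its reduction modulo the maximal ideal is a $k(C)$-point $p$ of the fibre $\YY\times_\XX\Spec k(C)$; after replacing $\XX$ by a blow-up (which only adds divisorial valuations and hence preserves the hypothesis) and using that $Y$ is smooth to put the point in general position, one arranges that $p$ lies in the smooth locus of $\YY\to\XX$.

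\emph{Descending to a patch $F_U$.} The fibre of $\YY$ over $\eta_C$ is of finite type over $k(C)=\varinjlim_U\OO(U)$, the colimit over nonempty opens $U\subseteq C$ of their rings of regular functions, so $p$ spreads out to a section of $\YY\times_\XX U\to U$ for some nonempty open $U\subseteq C$; shrinking $U$, I may assume $U$ lies in the nonsingular locus of $X_0$, is properly contained in $C$, and $\YY\to\XX$ is smooth at every point of the image of this section. Now $\hat{R}_U$ is $(t)$-adically complete, so $(\hat{R}_U,(t))$ is a Henselian pair, and $\hat{R}_U/(t)\isom\OO(U)$ (cf. \cite{HHK-quadratic}). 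Restricting $\YY$ to the open subscheme $Z$ of $\YY\times_\XX\Spec\hat{R}_U$ that is smooth over $\Spec\hat{R}_U$ and contains the image of our section, the surjectivity $Z(\hat{R}_U)\twoheadrightarrow Z(\OO(U))$ for the Henselian pair $(\hat{R}_U,(t))$ lifts the $\OO(U)$-point to a section of $Z\to\Spec\hat{R}_U$; its restriction to the generic point $\Spec F_U$ of $\Spec\hat{R}_U$ gives a point of $Y$ over $F_U$, so $Y(F_U)\neq\emptyset$. I expect the main obstacle to be the analytic and commutative-algebra core of this argument: proving that $(\hat{R}_U,(t))$ is Henselian with $\hat{R}_U/(t)\isom\OO(U)$, relating $\hat{\OO}_{\XX,\eta_C}$ to a completion of a localization of $\hat{R}_U$, and --- the most delicate step --- arranging, via blow-ups and the smoothness of $Y$, that the $F_{\nu_{\eta_C}}$-point reduces into the smooth locus, so that the Henselian lifting is available. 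All of this is carried out in \cite[Proposition 5.8]{HHK-torsors}.
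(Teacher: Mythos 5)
Your proposal is correct and takes essentially the same approach as the paper: both reduce the first assertion to \cite[Proposition 5.8]{HHK-torsors} applied at the generic points of the components of $X_0$, and both deduce the ``in particular'' clause by observing that $\PP = \bigcup_C (C\setminus U_C)$ is a finite set of closed points. The only difference is that the paper cites the HHK result as a black box, whereas you unwind its internal argument (valuative criterion, reduction to the special fibre, Henselian lifting over $\hat R_U$); that is an expository expansion rather than a different route.
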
  

\begin{proof} Let  $C$ be an  irreducible component  of $X_0$. Then  for the generic point $\eta$ of $C$, $F_\eta$ is 
the completion of  $F$ at the discrete valuation given by $\eta$ and hence $Y(F_\eta) \neq \emptyset$. 
Then, by  (\cite[Proposition 5.8]{HHK-torsors}), 
there exists a nonempty 
proper open subset $U_\eta$ of $C$ such that $Y(F_{U_\eta}) \neq \emptyset$. Let $\PP = X_0 \setminus \cup_\eta U_\eta$.
Since $U_\eta$ is a nonempty open subset of the irreducible curve $C$, $C - U_\eta$ is a finite set.
Since there are only finitely many  irreducible components  of $X_0$, $\PP$ is a finite set.
Let $P \in X_0 \setminus \PP$. Then $P \in U_\eta$ for some $\eta$. 
Since $F_{U_\eta} \subset F_P$ and $Y(F_{U_\eta}) \neq \emptyset$, $Y(F_{P}) \neq \emptyset$. 
\end{proof}

We record the following from (\cite[Proposition 2.4]{reddy}).

\begin{prop}
\label{reddy}
Let $(R,\mathfrak{m})$ be a 2-dimensional complete regular ring with maximal ideal $\mathfrak{m} = (\pi,\delta)$,
  field of fractions $F$ and residue field $k$.  Let $D \in {}_d\text{Br}(F)$ be  a division algebra over $F$ which is  
  unramified  on $R$   except  possibly at $\langle \pi \rangle$ or $\langle \delta \rangle$. Suppose that  $d$ is coprime to 
   $\text{char}(k)$ and $F$ contains a primitive $d^{\rm th}$ root of unity. Then 
   ind$(A) = $ ind$(A\otimes F_\pi)$.
\end{prop}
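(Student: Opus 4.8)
The plan is to prove the nontrivial divisibility $\mathrm{ind}(D) \mid \mathrm{ind}(D\otimes F_\pi)$; the reverse divisibility is automatic since $F\subseteq F_\pi$. \textbf{Step 1 (normalizing $D$).} Since $F$ contains a primitive $d$-th root of unity, identify $\muu_d$ with $\Z/d$, so that $\partial_{(\pi)}$ takes values in $H^1(\kappa_\pi,\Z/d)\cong\kappa_\pi^\times/\kappa_\pi^{\times d}$, where $\kappa_\pi := \Frac(R/(\pi))$ is the fraction field of the complete discrete valuation ring $S:=R/(\pi)$, with residue field $k$ and uniformizer the image $\bar\delta$ of $\delta$. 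Writing $\partial_{(\pi)}(D)=\overline{a}\,\bar\delta^{\,c}$ with $\overline a\in S^\times$, lifting $\overline a$ to a unit $a\in R^\times$, and subtracting the symbol $(a,\pi)+c\,(\pi,\delta)$, we may assume $D$ is unramified at $(\pi)$. Repeating at $(\delta)$ and using that $D$ is unramified on $R$ away from $(\pi)$ and $(\delta)$ together with purity for the Brauer group of the regular ring $R$ (Auslander--Goldman), we arrive at a presentation
\[
D \sim D_0 + (a,\pi) + (b,\delta) + c\,(\pi,\delta) \quad\text{in }\Br(F),
\]
with $a,b\in R^\times$, $c\in\Z/d$, and $D_0$ inflated from $\Br(k)\cong\Br(R)$ --- the last isomorphism because $R$ is complete local with residue characteristic prime to $d$. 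This is the sort of description of $D$ made available by Saltman's analysis, which the paper uses later in any case.

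\textbf{Step 2 (computing $\mathrm{ind}(D\otimes F_\pi)$).} Over the complete discretely valued field $F_\pi$ (uniformizer $\pi$, residue field $\kappa_\pi$) the element $\delta$ is a unit, so $D_0+(b,\delta)$ lies in $\Br(\hat{\OO}_{(\pi)})$ and $(a\delta^c,\pi)$ carries all the ramification, with residue character $\chi:=\overline{a\delta^c}\in\kappa_\pi^\times/\kappa_\pi^{\times d}$ of order $m\mid d$. By the structure theory of Brauer groups of complete discretely valued fields --- the index of a tamely ramified class is the order of its residue character times the index of its inertial part after the corresponding residue extension --- one gets $\mathrm{ind}(D\otimes F_\pi)=m\cdot\mathrm{ind}\big((D_0+(b,\delta))\otimes_{\kappa_\pi}(\kappa_\pi)_\chi\big)$. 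Since $a,\delta$ are units of $S$, the extension $(\kappa_\pi)_\chi$ is again the fraction field of a complete discrete valuation ring with uniformizer $\bar\delta$ and residue field a finite extension $k_1/k$; iterating, one peels off the order $m'$ of the residue of $(b,\delta)$ over $k_1$ and is left with $\mathrm{ind}(D\otimes F_\pi)=m\cdot m'\cdot\mathrm{ind}(D_0\otimes k')$ for an explicit finite extension $k'/k$.

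\textbf{Step 3 (matching with $\mathrm{ind}(D)$).} It remains to produce a splitting field of $D$ over $F$ of degree $m\cdot m'\cdot\mathrm{ind}(D_0\otimes k')$. Because $a,b$ are units of $R$ and $d$ is invertible, each cyclic extension appearing in Step 2 is given by radicals and lifts to a finite \'etale (again complete regular local) extension of $R$; adjoining to $F$ the relevant roots of $a$, $\delta$, $b$ and then a splitting field of $D_0\otimes k'$ produces an extension $L/F$ of exactly this degree over which $D$ becomes unramified on the corresponding regular local ring and constant, hence split. Combined with the trivial divisibility this gives $\mathrm{ind}(D)=\mathrm{ind}(D\otimes F_\pi)$.

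\textbf{Main obstacle.} The crux is Step 3: a priori the splitting data obtained over $F_\pi$ reflects only the behavior of $D$ away from the closed fibre of $\Spec R$, and the content of the proposition is exactly that no index is lost upon completing at $(\pi)$, even though $\mathrm{cd}(F)=\mathrm{cd}(\kappa_\pi)+1$. This works because the only remaining ramification of $D$ lives on the two regular parameters $(\pi)$ and $(\delta)$: then $R/(\pi)$ is itself a complete discrete valuation ring faithfully recording the $\delta$-ramification, and the residual class $D_0$ is already defined over $k$, so every ingredient of the splitting field over $F_\pi$ descends to one defined over $R$, hence over $F$, of the same degree. The hypotheses that $d$ is prime to $\mathrm{char}(k)$ and that $\muu_d\subseteq F$ are used precisely to ensure these radical extensions and $d$-th-power classes lift without changing the relevant degrees.
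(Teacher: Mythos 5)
The paper's proof of this proposition is a one-sentence citation to Reddy--Suresh (their Proposition 2.4), observing that the hypothesis there can be relaxed from ``$F$ contains a primitive $n$-th root of unity'' to ``$F$ contains a primitive $d$-th root of unity.'' You instead give a from-scratch argument. Your overall plan --- Saltman normal form $D\sim D_0+(a,\pi)+(b,\delta)+c(\pi,\delta)$, the Witt/Nakayama index formula over the complete discretely valued field $F_\pi$, then descent of the splitting data back to $F$ --- is the right shape and in fact closely tracks what Reddy--Suresh actually do, so the route is not really new, just written out.

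However, Steps 2 and 3 have a genuine gap in exactly the delicate case. The sentence ``Since $a,\delta$ are units of $S$'' is false: $\bar\delta$ is a \emph{uniformizer} of $S=R/(\pi)$, not a unit. Consequently, whenever the cross-term coefficient $c$ is nonzero, the residue character $\chi=\overline{a\delta^{\pm c}}$ at $(\pi)$ is \emph{ramified} over $\kappa_\pi$, and the extension $(\kappa_\pi)_\chi$ is not of the simple form you describe (a complete discretely valued field with uniformizer still $\bar\delta$ and residue field a finite extension of $k$); the uniformizer changes, the valuation of $\delta$ in $(\kappa_\pi)_\chi$ changes, and the subsequent ``peeling off $(b,\delta)$'' does not proceed as written. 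The same issue infects Step 3: the cyclic extension of $F_\pi$ cut out by $\chi$ is then ramified over $(\delta)$ as well as $(\pi)$, so it does not lift to a finite \'etale extension of $R$ as you claim; one would have to argue separately that the relevant radical extensions still yield regular local rings and that the degree bookkeeping survives. The $(\pi,\delta)$ cross-term is precisely what makes the statement delicate, and your argument essentially only covers the case $c=0$. To repair it you would need to run the index computation more carefully in the ramified case --- e.g.\ by first normalizing the pair $(\pi,\delta)$ so that the residue at $\pi$ becomes a unit class, or by invoking the actual reduction lemma Reddy--Suresh use (their Lemma 2.1) with an appropriate choice of generators.
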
 

\begin{proof} Let $n = $ deg$(A)$.  This result is  proved in (\cite[Proposition 2.4]{reddy}) under the assumption that 
$F$ contains a primitive $n^{\rm th}$ root of unity. However we note that the same proof goes through if $F$ contains a primitive 
$d^{\rm th}$ root of unity. 
\end{proof}

We end this section with the following.  

\begin{theorem} 
\label{pgla} Let $T$ be a  complete discrete  valuation ring   with residue field $k$ and field of fractions $K$. 
Suppose that char$(k) \neq 2$. 
Let $F$ be the function field of a smooth projective  geometrically integral   curve over $K$ and  $\XX  $  a  model  of $F$. 
Let $A$ be a central simple algebra over $F$ of period  at most 2.  Let  $L$ be the field $F$ or $F_P$ for some closed point $P$ of $\XX$. 
Let $Z$ be a projective homogeneous space under $PGL(A)$ over $L$.  If $Z(L_\nu) \neq \emptyset$ for all divisorial discrete valuations of $L$,
then $Z(L) \neq \emptyset$.
\end{theorem}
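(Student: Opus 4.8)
The plan is to reduce to the known local-global principles for projective homogeneous spaces under $PGL_1$ of a division algebra, i.e.\ to the situation covered by \cite{reddy} via Proposition \ref{reddy}, but now working over the field $L$, which is either $F$ itself or one of the ``point fields'' $F_P$. The starting observation is that a projective homogeneous space under $PGL(A)$ is determined by a conjugacy class of parabolic subgroups, and since the Brauer class of $A$ has period at most $2$, the group $PGL(A)$ is an inner form of $PGL_n$ of type $\Dynkin{A}_{n-1}$; its projective homogeneous varieties are the generalized Severi--Brauer varieties $\SB_d(A)$ (and products of twisted flag varieties built from these), whose $L$-points detect exactly whether $\mathrm{ind}(A \otimes L')$ divides the appropriate integers for the relevant field extensions. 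So it suffices to show: for each $d$, if $\SB_d(A)$ has an $L_\nu$-point for all divisorial $\nu$ of $L$, then it has an $L$-point; equivalently, a divisibility statement on $\mathrm{ind}(A)$ can be checked divisorial-locally.

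First I would handle the case $L = F$. Here one invokes the structure of $F$ as the function field of a curve over a complete discretely valued field together with the result that the index of a period-$2$ algebra over such a field is governed by its completions at divisorial valuations --- this is essentially where the hypothesis $\mathrm{char}(k)\neq 2$ and the patching machinery of \cite{HHK-quadratic}, \cite{HHK-torsors} enters, and where one can cite the quadratic-form local-global principle of Colliot-Th\'el\`ene--Parimala--Suresh (\cite[Theorem 3.1]{patchingLGPpadic}) in the degree at most $2$ case: a conic or a generalized Severi--Brauer variety of a quaternion or biquaternion algebra has a rational point iff it has one over every $F_\nu$. More precisely, after adjoining a square root of $-1$ if necessary (a quadratic extension, controlled since $\mathrm{char}(k)\neq 2$) one is in the situation of Proposition \ref{reddy}, and a standard ``second Tate--Shafarevich group vanishes'' type argument --- reduce modulo a model, use Theorem \ref{finite} to get a nonempty open $U\subset C$ on each component with a point over $F_U$, and patch over the finitely many bad points --- shows that the index, and hence the isotropy/splitting behavior of all the $\SB_d(A)$, is detected divisorially.

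The case $L = F_P$ requires an extra input: $F_P$ is the fraction field of a two-dimensional complete regular local ring $\hat{\OO}_P$ with residue field a finite-type extension of $k$, and its divisorial discrete valuations correspond (after possibly blowing up) to the height-one primes of regular two-dimensional complete local rings dominating $\hat{\OO}_P$. Here one again uses Proposition \ref{reddy} --- possibly after a quadratic constant field extension to get the relevant roots of unity, which is harmless since it can be undone by a corestriction/norm argument in the period-$2$ setting --- to conclude that $\mathrm{ind}(A\otimes F_P)$ equals $\mathrm{ind}(A\otimes F_{P,\nu})$ for a suitable divisorial $\nu$, and then that the $\SB_d$ behavior over $F_P$ is forced by the $\SB_d$ behavior over the $F_{P,\nu}$. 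Combining the two cases and translating back from generalized Severi--Brauer varieties to an arbitrary projective homogeneous space $Z$ under $PGL(A)$ (which decomposes, over $L$, according to a subset of vertices of the Dynkin diagram into a condition on several $\mathrm{ind}(A^{\otimes i})=\mathrm{ind}(A^{\otimes i})$, all of period dividing $2$) yields $Z(L)\neq\emptyset$.

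The main obstacle I anticipate is the passage from divisorial valuations of $L=F_P$ to a usable local-global statement over $F_P$: one must be careful that ``divisorial discrete valuations of $F_P$'' is the right object and that Proposition \ref{reddy} genuinely applies after a blow-up, since $A$ is only assumed unramified away from two prescribed divisors on the original model, and blowing up introduces new exceptional curves over which ramification must be tracked. Controlling the ramification of $A\otimes F_P$ on a regular model of $\Spec(\hat{\OO}_P)$, so that Proposition \ref{reddy} is in force, is the technical heart; the rest is bookkeeping with twisted flag varieties and the already-cited patching results.
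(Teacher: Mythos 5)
Your overall strategy matches the paper's: the paper's proof is precisely to invoke the proof of \cite[Theorem 10.1]{parimala2020localglobal} (which carries out the reduction to generalized Severi--Brauer varieties, the patching via statements like Theorem \ref{finite}, and the treatment of both $L=F$ and $L=F_P$) and to substitute Proposition \ref{reddy} as the input controlling the index. Your sketch reproduces that route.

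There is, however, one concrete confusion to correct. You twice say you may need to ``adjoin a square root of $-1$'' to place yourself in the situation of Proposition~\ref{reddy}, and in the $F_P$ case you propose to ``undo'' this quadratic extension by a corestriction/norm argument. This is both unnecessary and, as written, a gap. Proposition~\ref{reddy} only requires $F$ to contain a primitive $d$-th root of unity where $d$ is the \emph{period} of $D$; here $d=2$, and the primitive square root of unity is $-1$ itself, which lies in $F$ automatically because $\mathrm{char}(k)\neq 2$ forces $\mathrm{char}(F)\neq 2$. No field extension is ever needed. The distinction matters: if the extension were genuinely required, the fallback you gesture at would not be routine, since the index of $A$ can drop under $F(\sqrt{-1})/F$, so $\SB_d(A)$ acquiring a point over $F(\sqrt{-1})$ does not descend to $F$, and a ``corestriction/norm'' descent for projective homogeneous varieties under $PGL(A)$ is not a one-liner. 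Simply delete the detour. Separately, your parenthetical citation of \cite[Theorem 3.1]{patchingLGPpadic} for ``a generalized Severi--Brauer variety of a quaternion or biquaternion algebra'' overreaches: that result is a local-global principle for quadratic forms and applies to conics ($\SB_1$ of a quaternion algebra), but $\SB_d(A)$ for $\mathrm{ind}(A)=4$ is not a quadric, so it should not be leaned on in the degree-$4$ case; the correct tool is the $PGL(A)$ statement you are in the process of establishing.
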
 

\begin{proof}  This follows from the proof of (\cite[Theorem 10.1]{parimala2020localglobal}) and 
 (\ref{reddy}).
  \end{proof} 
 
 \begin{cor} 
 \label{gla}
 Let $L$ and $A$ be as in (\ref{pgla}). Let $Z$ be a projective homogenous space under $GL(A)$ over $L$.
  If $Z(L_\nu) \neq \emptyset$ for all divisorial discrete valuations of $L$,
then $Z(L) \neq \emptyset$.
 \end{cor}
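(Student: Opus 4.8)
The plan is to deduce Corollary~\ref{gla} from Theorem~\ref{pgla} by reducing statements about projective homogeneous spaces under $GL(A)$ to statements about projective homogeneous spaces under $PGL(A)$. The key observation is that $GL(A)$ and $PGL(A)$ differ only by the central torus $\Gm$, and a central torus acts trivially on any projective homogeneous variety: if $X$ is a projective homogeneous space under $GL(A)$, then the center $\Gm \subset GL(A)$ acts trivially on $X$, so the action of $GL(A)$ on $X$ factors through the quotient $GL(A)/\Gm = PGL(A)$. Thus $X$ is also a projective homogeneous space under $PGL(A)$ over $L$.

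First I would make precise the claim that the center acts trivially. A projective homogeneous variety under a connected group $H$ is of the form $H/P$ for a parabolic subgroup $P$; since every parabolic subgroup contains the (scheme-theoretic) center of $H$, the center acts trivially on $H/P$, and hence the $H$-action descends to an action of $H$ modulo its center. Applying this with $H = GL(A)$, whose center is $\Gm$ (the scalars), the quotient is $PGL(A)$, and $X = GL(A)/P$ is naturally a projective homogeneous space under $PGL(A)$. One must check that $X$ remains homogeneous, i.e. that the map $PGL(A) \to \AAut(X)$ still has a transitive orbit on $L$-points after base change, which is immediate since $GL(A) \to PGL(A)$ is surjective as a map of algebraic groups and transitivity of the $GL(A)$-action on geometric points passes to the quotient.

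Next I would simply invoke Theorem~\ref{pgla}: given that $Z(L_\nu) \neq \emptyset$ for all divisorial discrete valuations $\nu$ of $L$, and that $Z$ is a projective homogeneous space under $PGL(A)$ over $L$ by the previous paragraph, Theorem~\ref{pgla} yields $Z(L) \neq \emptyset$. Note the hypotheses needed by Theorem~\ref{pgla} — namely that $L$ is $F$ or $F_P$ for a closed point $P$, and that $A$ has period at most $2$ — are exactly the hypotheses carried over from the statement ``Let $L$ and $A$ be as in~(\ref{pgla})'', so nothing extra is required.

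I do not expect a genuine obstacle here; the corollary is essentially formal. The only point deserving care is the assertion that a central torus acts trivially on a projective homogeneous variety, and more specifically the identification of the relevant quotient group as $PGL(A)$ rather than some isogenous group; once that is stated cleanly the rest is a direct citation of Theorem~\ref{pgla}. If one preferred to avoid the general structure theory of parabolic subgroups, an alternative is to note that Tits' classification identifies projective homogeneous spaces under $GL(A)$ with certain varieties of flags of right ideals in $A$ (e.g. generalized Severi--Brauer varieties and their flag analogues), on which scalar matrices manifestly act trivially, giving the same reduction to $PGL(A)$.
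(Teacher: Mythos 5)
Your proof is correct and, as far as can be determined, takes essentially the same route as the paper's. The paper simply points to the proof of Corollary~10.3 in the cited reference together with Theorem~\ref{pgla}; that cited corollary performs precisely the reduction you describe, namely that the central torus $\Gm \subset GL(A)$ lies in every parabolic and hence acts trivially on any projective homogeneous variety, so the action factors through $PGL(A)$ and the statement reduces to Theorem~\ref{pgla}.
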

 
 \begin{proof}  This follows from the proof of (\cite[Corollary 10.3]{parimala2020localglobal}) and  (\ref{pgla}).
  \end{proof}

\section{Division algebras  with involution of first kind over two dimensional local fields }
\label{disc}

 Let $(R,\mathfrak{m})$ be a 2-dimensional complete regular ring with maximal ideal $(\pi,\delta)$,
  field of fractions $F$ and residue field $k$. Suppose that  $\text{char}(k) \neq 2$.  Let $D \in {}_2\text{Br}(F)$ be  a division algebra over $F$ which is  
  unramified  on $R$   except  possibly at $\langle \pi \rangle$ or $\langle \delta \rangle$.  In this section we show that  if ind$(D) = 4$, then 
  $D$ is   a tensor product of  two quaternion  algebras with some properties.   
  Suppose  that for any central simple algebra  $A\in  {}_2\text{Br}(k)$,   ind$(A) \leq 2$. Then we show that ind$(D) \leq 8$. 
  Further we show that  if  ind$(D) = 8$, then $D$  is isomorphic to a  tensor product of three quaternion algebras with some properties.

We begin with the following well know lemma

\begin{lemma}
\label{overk} Let $A \in {}_2\text{Br}(k)$ be a  central  division  algebra over  $k$. \\
1) If $ A \otimes_k k(\sqrt{a})$ has index at most 2 for some $a \in k^*$, then $A = (a, c) \otimes (b, d) \in Br(k)$ for some $b, c, d, \in k^*$. \\
2) If $ A \otimes_k k(\sqrt{a}, \sqrt{b})$ is a matrix algebra for some $a, b \in k^*$, then $A = (a,c) \otimes (b, d) \in Br(k)$ for some $c, d, \in k^*$. 
\end{lemma}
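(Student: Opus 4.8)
The plan is to reduce both statements to the classical structure theory of central simple algebras split by a quadratic (or biquadratic) extension. For part 1), suppose $A \otimes_k k(\sqrt a)$ has index at most $2$. If it is split, then $A$ itself is split by $k(\sqrt a)$, hence by the standard description of algebras of exponent $2$ split by a quadratic extension, $A \cong (a,c)$ for some $c \in k^*$ (this is the rank-$2$ case of the ``chain lemma''/corestriction argument, or simply Albert's theorem on quaternion algebras), and we may take $(b,d)$ to be any split quaternion algebra. If $A \otimes_k k(\sqrt a)$ has index exactly $2$, write $Q = A \otimes_k k(\sqrt a)$, a quaternion division algebra over $k(\sqrt a)$. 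The key point is that since $A$ has exponent $2$, the corestriction $\cores_{k(\sqrt a)/k}(Q)$ is trivial in $\Br(k)$, so $Q$ descends in a controlled way. Concretely, I would invoke the structure result (due to Albert, see also \cite[Chapter 8]{scharlau}) that a central simple $k$-algebra of degree $4$ and exponent $2$ whose class becomes that of a quaternion algebra over $k(\sqrt a)$ is Brauer-equivalent to a biquaternion algebra $(a,c) \otimes (b,d)$; more precisely one shows $A \otimes (a,c)$ is split by $k(\sqrt a)$ for a suitable $c$, forcing $A \otimes (a,c) \sim (a, c')$, and then absorbs. The cleanest route: $A$ has a maximal subfield containing $k(\sqrt a)$ only if $\deg A \le 4$; treating $\deg A = 2$ (then $A = (a,c)$ directly since $A$ is split by $k(\sqrt a)$, is itself quaternion) and $\deg A = 4$ separately, and using that a biquaternion algebra is exactly a degree-$4$ exponent-$2$ algebra (Albert), we get $A \sim (a,c) \otimes (b,d)$; equality in $\Br(k)$ is what the statement asks for.

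For part 2), suppose $A \otimes_k k(\sqrt a, \sqrt b)$ is a matrix algebra, i.e. $A$ is split by the biquadratic extension $E = k(\sqrt a, \sqrt b)$. Then $A$ is split by each of the three intermediate quadratic extensions' common refinement; I would use the restriction-corestriction sequence for $E/k$. Since $\Gal(E/k) \cong \Z/2 \times \Z/2$, an element of $\Br(k)$ killed by $\mathrm{res}_{E/k}$ lies in the image of the ``corestriction from the three quadratic subfields'' map — equivalently, by a Hochschild–Serre / inflation-restriction computation with $H^2$ of the Klein four-group, the relative Brauer group $\Br(E/k)$ is generated by the classes of algebras split by $k(\sqrt a)$ and by $k(\sqrt b)$. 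By part 1) applied twice (or directly by the quaternion case), a class split by $k(\sqrt a)$ is $(a,c)$ for some $c$ and one split by $k(\sqrt b)$ is $(b,d)$ for some $d$, so $A = (a,c) \otimes (b,d)$ in $\Br(k)$.

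I expect the main obstacle to be pinning down the exact classical reference and making the exponent-$2$ hypothesis do its work cleanly: the statement is ``well known'' precisely because it follows from Albert's theory of biquaternion algebras together with the cohomology of the Klein four-group, but assembling these into a self-contained two-line argument requires care about whether one gets Brauer-equivalence or genuine isomorphism of algebras, and about the degenerate (split) sub-cases. Since the conclusion is only asserted at the level of Brauer classes ($A = \cdots \in \Br(k)$), I would phrase everything in $\Br(k)$ and simply cite \cite[Chapter 8]{scharlau} (or a standard reference on biquaternion algebras) for the degree-$4$ exponent-$2$ structure theorem, and handle part 2) by the $H^2(\Z/2 \times \Z/2)$ computation, noting that $\mathrm{char}(k) \neq 2$ guarantees all the quadratic extensions in sight are separable so the Galois-cohomological input applies.
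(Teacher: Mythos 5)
Your part 1 sketch does not, as written, produce the conclusion. After pinning the degree down to $4$, invoking Albert's theorem that a degree-$4$ exponent-$2$ algebra is biquaternion only gives $A \simeq Q_1 \otimes Q_2$ for \emph{some} quaternion algebras $Q_1$, $Q_2$; the entire content of the lemma is that $a$ can be placed in one of the slots, and nothing in your ``cleanest route'' produces that. Your earlier attempt (``$A \otimes (a,c)$ is split by $k(\sqrt a)$ for a suitable $c$'') cannot work either, since $(a,c) \otimes k(\sqrt a)$ is always split, so tensoring with $(a,c)$ never changes the index of $A \otimes k(\sqrt a)$. For part 2, the claim that $\Br(k(\sqrt a, \sqrt b)/k)$ is generated by $\Br(k(\sqrt a)/k)$ and $\Br(k(\sqrt b)/k)$ is essentially the theorem to be proved, and it is \emph{not} a formal consequence of $H^2$ of the Klein four-group: the $G$-module $E^*$ matters, and the analogous generation statement already fails for triquadratic extensions (Amitsur--Rowen--Tignol). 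Invoking it as ``a Hochschild--Serre computation'' is circular here.

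The paper's argument is uniform for both parts and avoids these issues. Since $A$ is division and $\text{ind}(A \otimes k(\sqrt a)) \le 2$, one gets $\deg A = 4$ and $k(\sqrt a)$ embeds in $A$. Let $A_1$ be the commutant of $k(\sqrt a)$ in $A$; it is a quaternion algebra over $k(\sqrt a)$. Because $A \in {}_2\Br(k)$, $A$ carries an involution, and one can choose an involution $\sigma$ on $A$ restricting to the nontrivial automorphism of $k(\sqrt a)/k$; since $\sigma$ preserves $k(\sqrt a)$, it preserves $A_1$, so $\sigma|_{A_1}$ is an involution of the second kind. Albert's descent theorem for quaternion algebras with unitary involutions then gives $A_1 \simeq Q_1 \otimes_k k(\sqrt a)$ for a quaternion $Q_1$ over $k$. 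Taking $Q_2$ to be the commutant of $Q_1$, one has $A = Q_2 \otimes Q_1$ with $k(\sqrt a) \subset Q_2$, hence $Q_2 = (a,c)$, which is part 1; part 2 adds the observation that since $k(\sqrt a, \sqrt b)$ is a maximal subfield, $Q_1$ is split by one of the quadratic subfields, from which $A = (a,c') \otimes (b,d)$ after an easy rearrangement. This is the step your sketch omits, and it is where the hypothesis $A \in {}_2\Br(k)$ (existence of an involution) is actually used.
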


\begin{proof}    If deg$(A) = 1$ or $2$, then 1) and 2) are immediate.    Suppose that deg$(A) \geq 4$.

Suppose $ A \otimes_k k(\sqrt{a})$ has index at most 2 for some $a \in k^*$.  Then  deg$(A) = $ ind$(A) = 4$ and $ K = k(\sqrt{a})$ is a subfield of $A$. 
Let $A_1$ be the commutant of $ K$ in $A$. Then $A_1$ is a quaternion algebra over $K$ (\cite[Theorem 5.4]{scharlau}). Since $A \in   {}_2\text{Br}(k)$, $A$ admits an involution (\cite[Chapter 8, Theorem 8.4]{scharlau})
and the non trivial automorphism of $K/k$ can be extended to an involution $\sigma$ on $A$ (\cite[Chapter 8, Theorem 10.1]{scharlau}). Since $\sigma(K)= K$,  the restriction of $\sigma$ to $A_1$
is an   involution of second kind. Thus, by a theorem of Albert (\cite[Chapter 10, Theorem 21]{albert}),  $A_1 = K \otimes Q_1$ for some quaternion algebra $Q_1$. 
Let $Q_2$ be the commutant  of $Q_1$ in $A$. Then $K \subset Q_2$ and $A = Q_2 \otimes Q_1$. 
Since $K \subset Q_2$, $Q_2 = (a, c)$ for some $c \in k^*$. Since $Q_1$ is a quaternion algebra, $Q_1 = (b, d)$ for some $b, d \in k^*$.
Hence $A = (a, c) \otimes (b, d)$.

Suppose $ A \otimes_k k(\sqrt{a}, \sqrt{b})$ is a matrix algebra for some $a, b \in k^*$.  Then deg$(A) = $ ind$(A) = 4$ and $ K = k(\sqrt{a}, \sqrt{b} )$ is a 
maximal subfield of $A$. 
Then as above we have $A = Q_2 \otimes Q_1$ with $K = k(\sqrt{a}) \subset Q_2$. Since $k(\sqrt{a}, \sqrt{b})$ is a maximal subfield of $A$, it follows that 
$k(\sqrt{b}) \subset Q_1$. Hence $Q_2 = (a, c)$ and $Q_1 = (b, d)$ for some $c, d \in k^*$. Thus $A = (a,c) \otimes (b, d)$. 
\end{proof}

\begin{lemma}
\label{deg4-u-field}
 Let $R$ be a  complete regular local ring with residue field $k$ and field of fractions $F$.
Suppose that char$(k) \neq 2$. Let $\Gamma_0$ be an Azumaya algebra over $R$ and 
$D_0 = \Gamma_0 \otimes_RF \in {}_2\text{Br}(F)$.  Let $u \in R$ be a unit. 
If   ind$(D_0\otimes_F(F(\sqrt{u}))) \leq 2$.
Then there exists $v, w, t \in R^*$ such that $D_0 =  (u, v) \otimes (w, t) \in Br(F)$.
\end{lemma}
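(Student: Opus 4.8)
The plan is to deduce this ``Azumaya'' refinement from the field-theoretic statement in Lemma~\ref{overk}(1) by reducing modulo the maximal ideal and then lifting. First I would pass to the residue algebra: since $\Gamma_0$ is Azumaya over the complete (hence Henselian) local ring $R$ with residue field $k$, reduction mod $\mm$ gives a central simple algebra $\ol{\Gamma_0} = \Gamma_0\otimes_R k$ over $k$, and by rigidity of Azumaya algebras over a Henselian pair the index of $D_0$ over $F$ equals the index of $\ol{\Gamma_0}$ over $k$; likewise $D_0$ has period~$2$ forces $\ol{\Gamma_0}\in {}_2\Br(k)$. The hypothesis $\mathrm{ind}(D_0\otimes_F F(\sqrt u))\le 2$ needs to be transported to the residue field: because $u\in R^*$ is a unit, $R[\sqrt u]$ is again a (finite \'etale, hence) complete regular local ring whose residue field is $k(\sqrt{\bar u})$ (if $\bar u\notin k^{*2}$; if $\bar u\in k^{*2}$ the statement is vacuous as $\mathrm{ind}\,D_0\le 2$ already and one takes $w,t$ trivial), and $\Gamma_0\otimes_R R[\sqrt u]$ is Azumaya with residue algebra $\ol{\Gamma_0}\otimes_k k(\sqrt{\bar u})$, so again indices match. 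Hence $\ol{\Gamma_0}\otimes_k k(\sqrt{\bar u})$ has index at most~$2$.

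Now I would apply Lemma~\ref{overk}(1) to $A = \ol{\Gamma_0}$ and $a = \bar u$: there exist $\bar b,\bar c,\bar d\in k^*$ with $\ol{\Gamma_0}\isom (\bar u,\bar c)\otimes(\bar b,\bar d)$ in $\Br(k)$. Lift $\bar b,\bar c,\bar d$ arbitrarily to units $v := $ a lift of $\bar c$, $w := $ a lift of $\bar b$, $t := $ a lift of $\bar d$ in $R^*$ (possible since $R\to k$ is surjective and units lift to units in a local ring). The quaternion algebras $(u,v)$ and $(w,t)$ over $F$ are unramified on $R$ (their symbols are in units), hence extend to Azumaya algebras over $R$, and $(u,v)\otimes_R(w,t)$ is an Azumaya $R$-algebra whose residue algebra is Brauer-equivalent to $(\bar u,\bar c)\otimes(\bar b,\bar d)\sim\ol{\Gamma_0}$. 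So $\Gamma_0$ and $(u,v)\otimes(w,t)$ are two Azumaya $R$-algebras with Brauer-equivalent reductions; by injectivity of $\Br(R)\to\Br(k)$ for $R$ complete local (equivalently, the specialization map is an isomorphism), they are Brauer-equivalent over $R$, and hence $D_0 = \Gamma_0\otimes_R F \sim (u,v)\otimes(w,t)$ in $\Br(F)$. Since both sides have the same period and (by the index computation above, comparing with the residue) the same index $\le 4$, and since $(u,v)\otimes(w,t)$ has degree~$4$, Brauer equivalence together with a degree/index count gives the asserted isomorphism $D_0\isom (u,v)\otimes(w,t)$ when $\mathrm{ind}(D_0)=4$; in the remaining cases $\mathrm{ind}(D_0)\le 2$ one adjusts the choice of the second quaternion factor to be split or to absorb $D_0$ directly, which is routine.

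The main obstacle I anticipate is the bookkeeping in the case $\bar u\in k^{*2}$ versus $\bar u\notin k^{*2}$ and, more substantively, making precise the claim that index is preserved under reduction for Azumaya algebras over a complete regular local ring: this is standard (the Brauer group of $R$ injects into that of $k$, and index is read off from splitting by the residue field since finite separable extensions of $k$ lift to finite \'etale extensions of $R$ with the same residue degree), but it must be invoked cleanly. A secondary point is that Lemma~\ref{overk}(1) only gives a Brauer-class identity, so to get an \emph{isomorphism} of $R$- or $F$-algebras one must additionally match degrees: $(u,v)\otimes(w,t)$ has degree~$4$, and if $\mathrm{ind}(D_0)=4$ then $D_0$ itself has degree~$4$, so Brauer equivalence upgrades to isomorphism; if $\mathrm{ind}(D_0)<4$ one simply arranges that the tensor product of the two symbols again has the right index by choosing the factors appropriately (e.g.\ taking $(w,t)$ split or replacing it so that the product reproduces $D_0$ on the nose). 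None of this is deep, but it is where the care is needed.
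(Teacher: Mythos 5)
Your proposal follows the paper's proof almost verbatim: reduce modulo the maximal ideal to a central simple algebra over $k$, transport the index bound across $F(\sqrt{u})/F$ to $k(\sqrt{\bar u})/k$, apply Lemma~\ref{overk}(1) to write the residue algebra as a product of two quaternion symbols, lift the units, and conclude via $\Br(R)\cong\Br(k)$ for $R$ complete local (Auslander--Goldman). The only difference is that you expend effort upgrading the Brauer-class identity to an algebra isomorphism, which is unnecessary since the lemma only asserts equality in $\Br(F)$, and the case $\bar u\in k^{*2}$ is already absorbed into Lemma~\ref{overk}(1)'s treatment of low-degree algebras.
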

\begin{proof}
Suppose that ind$(D_0\otimes_F(F(\sqrt{u}))) \leq 2$. Let $\mathfrak{D}_0 = \Gamma_0 \otimes_R k$.   Since ind$(D_0 \otimes F(\sqrt{u}) \leq 2$, 
  $\mathfrak{D}_0 \otimes k(\sqrt{\overline{u}}) \leq 2$ (\cite[Lemma 1.1]{reddy}), where $\overline{u}$ is the image of $u$ in $k$. 
   Thus there exist $b, c, d \in k^*$ such that $\mathfrak{D}_0 = (\overline{u}, c) \otimes (b, d)$ (cf. \ref{overk}). 
   Let $v,w,t \in R^*$ be the lifts of $ b, c, d \in k^*$. 
   Since $R$ is a complete regular  local ring,  $\text{Br}(R) \cong \text{Br}(k)$ (\cite{auslander}, 6.5).  Hence  $D_0 = (u, v) \otimes (w, t) \in Br(F)$. 
\end{proof}
 
\begin{lemma}
\label{deg4-uv-field}
 Let $R$ be a  complete regular local ring with residue field $k$ and field of fractions $F$.
Suppose that char$(k) \neq 2$. 
Let $\Gamma_0 $ be an Azumaya algebra over $R$  with 
$D_0 = \Gamma_0 \otimes_RF \in {}_2\text{Br}(F)$.  
Let $u, v \in R$ be units. 
If   $D_0\otimes_F(F(\sqrt{u}, \sqrt{v})) $ is a matrix algebra, 
then there exists $ w, t \in R$ units such that $D_0 = (u, w) \otimes (v, t) \in \text{Br}(F)$.
\end{lemma}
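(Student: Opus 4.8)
The plan is to run essentially the same argument as in the proof of Lemma~\ref{deg4-u-field}, descending to the residue field via the isomorphism $\mathrm{Br}(R) \cong \mathrm{Br}(k)$ and invoking part~(2) of Lemma~\ref{overk} in place of part~(1). Set $\mathfrak{D}_0 = \Gamma_0 \otimes_R k$ and write $\bar u, \bar v \in k^*$ for the images of the units $u, v$. Since $[D_0] \in {}_2\mathrm{Br}(F)$ and $\mathrm{Br}(R) \cong \mathrm{Br}(k)$, we have $[\mathfrak{D}_0] \in {}_2\mathrm{Br}(k)$, so $\mathfrak D_0$ is a candidate for Lemma~\ref{overk}.

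The first step is to transfer the splitting hypothesis to the residue field. From the assumption that $D_0 \otimes_F F(\sqrt u, \sqrt v)$ is a matrix algebra I would deduce that $\mathfrak{D}_0 \otimes_k k(\sqrt{\bar u}, \sqrt{\bar v})$ is a matrix algebra, by applying the specialization lemma \cite[Lemma~1.1]{reddy} (already used in the proof of Lemma~\ref{deg4-u-field}) twice: once to the unit $u$ over $R$, and then to the unit $v$ over $R[\sqrt u]$, which is again a complete regular local ring with residue field $k(\sqrt{\bar u})$ when $\bar u \notin k^{*2}$ (and, when $\bar u \in k^{*2}$, $u$ is itself a square in $R$ by Hensel's lemma, so that step is vacuous; likewise for $v$). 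Then part~(2) of Lemma~\ref{overk} applied to $\mathfrak{D}_0$ produces $c, d \in k^*$ with $\mathfrak{D}_0 = (\bar u, c) \otimes (\bar v, d) \in \mathrm{Br}(k)$.

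Finally, choose units $w, t \in R^*$ lifting $c, d$ (the reduction map $R^* \to k^*$ being surjective). As $2 \in R^*$, the quaternion $R$-algebras $(u,w)$ and $(v,t)$ are Azumaya over $R$, so $(u,w) \otimes_R (v,t)$ is Azumaya over $R$ and reduces modulo $\mathfrak m$ to $(\bar u, c) \otimes (\bar v, d) = \mathfrak{D}_0$, which is also the reduction of $\Gamma_0$. Since $R$ is complete local, $\mathrm{Br}(R) \to \mathrm{Br}(k)$ is an isomorphism (\cite{auslander}, 6.5), so $[\Gamma_0] = [(u,w) \otimes (v,t)]$ in $\mathrm{Br}(R)$; extending scalars to $F$ gives $D_0 = (u,w) \otimes (v,t)$ in $\mathrm{Br}(F)$, as required. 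The only genuine content, as opposed to bookkeeping, is the first step — that forming the residue algebra commutes with adjoining square roots of units and does not raise the index — which is exactly the role played by the completeness of $R$ (equivalently, by the unramifiedness of $D_0$ witnessed by $\Gamma_0$) and is precisely what \cite[Lemma~1.1]{reddy} provides; everything else is formal descent along $\mathrm{Br}(R)\cong\mathrm{Br}(k)$.
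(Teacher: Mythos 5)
Your proof is correct and takes essentially the same route as the paper's: reduce the splitting hypothesis modulo the maximal ideal, invoke Lemma~\ref{overk}(2) over $k$, lift the coefficients to units of $R$, and conclude via $\mathrm{Br}(R)\cong\mathrm{Br}(k)$. The only difference is that you spell out the reduction step (iterated use of \cite[Lemma~1.1]{reddy} and the Hensel-square observations) which the paper leaves implicit; this is an expansion of detail, not a different argument.
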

\begin{proof} Let $\mathfrak{D}_0 = \Gamma_0 \otimes_R k$.
Since  $D_0\otimes_F(F(\sqrt{u}, \sqrt{v})) $ is a matrix algebra, $\mathfrak{D}_0 \otimes k(\sqrt{\overline{u}},  \sqrt{\overline{v}}) $ is a matrix algebra.
Hence $\mathfrak{D}_0  = (\overline{u}, c) \otimes ( \overline{v}, d) \in  Br(k) $ (cf. \ref{overk}). 
Let $w,t \in R^*$ be lifts of $c, d \in k^*$.  Since $R$ is a complete regular  local ring, $D_0  = (u, w) \otimes (v, t) \in \text{Br}(F)$.  
\end{proof}

\begin{lemma}  
\label{index-cal}
Let $R$ be a two dimensional 
 complete regular local ring  with residue field $k$, maximal ideal 
 $(\pi, \delta)$ and field of fractions $F$.
 Suppose that char$(k) \neq 2$.  
 Let $\Gamma_0$ be an Azumaya algebra on $R$  and $D_0 = \Gamma_0 \otimes F$.
 Suppose that  $D_0 \in \Brt(F)$.
 Let   $u, v \in R$ units. \\
 i) If $D =  D_0   \otimes (u, \pi)  \otimes (v, \delta)$, then ind$(D) = $ ind$(D_0\otimes F(\sqrt{u}, \sqrt{v})) [ F(\sqrt{u}, \sqrt{v}): F]$. \\
 ii) If $D = D_0 \otimes (u\pi, v\delta)$, then ind$(D) = $ 2 ind$(D_0)$. 
 \end{lemma}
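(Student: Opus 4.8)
The plan is to prove both statements by sandwiching $\mathrm{ind}(D)$ between an easy upper bound coming from an explicit splitting field and a lower bound of the shape $\mathrm{ind}(D) \geq \mathrm{ind}(D \otimes F_\pi)$, and then to evaluate $\mathrm{ind}(D \otimes F_\pi)$ exactly using the structure theory of the Brauer group of a complete discretely valued field. Note first that $F_\pi$ is complete discretely valued with residue field $\kappa(\pi) = \Frac(R/(\pi))$, which is itself complete discretely valued with residue field $k$ and uniformizer the image $\bar\delta$ of $\delta$; so $F_\pi$ is a two-dimensional local field, and the residue--index formula (in the tame case $\mathrm{char}(k)\neq 2$: the index of a class equals the degree of the residue-field extension cut out by its residue character, times the index of the unramified remainder over that extension) can be applied, iteratively. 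I will also use that for an Azumaya algebra $\Gamma$ over a complete regular local ring of dimension at most $2$, the index over the fraction field equals the index over the residue field; this follows by iterating Proposition \ref{reddy}.

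For part \emph{i)}, put $L = F(\sqrt u, \sqrt v)$. Over $L$ both $(u,\pi)$ and $(v,\delta)$ split, so $D\otimes L$ and $D_0\otimes L$ have the same Brauer class, whence $\mathrm{ind}(D) \mid [L:F]\,\mathrm{ind}(D_0\otimes L)$. For the reverse divisibility it suffices, since $\mathrm{ind}(D\otimes F_\pi)\mid\mathrm{ind}(D)$, to prove $\mathrm{ind}(D\otimes F_\pi) = [L:F]\,\mathrm{ind}(D_0\otimes L)$. I would write $D\otimes F_\pi$ as $[\,\Gamma_0\otimes F_\pi + (v,\delta)\,] + (u,\pi)$: the bracketed class is $\nu_\pi$-unramified with residue $\Gamma_0\otimes\kappa(\pi) + (\bar v,\bar\delta)$ over $\kappa(\pi)$, while $(u,\pi)$ has residue the class of $\bar u$. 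Applying the residue--index formula once along the $\pi$-adic valuation and once more along the $\bar\delta$-adic valuation of $\kappa(\pi)$, and using that $\mathrm{ind}(\Gamma_0\otimes k_L) = \mathrm{ind}(D_0\otimes L)$ for $\Gamma_0$ over the complete regular local ring $R_L$ (the integral closure of $R$ in $L$, with residue field $k_L$; it is \'etale over $R$ since $\mathrm{char}(k)\neq 2$), one obtains the asserted value. This needs a brief case analysis according to which of $\bar u,\bar v,\bar u\bar v$ are squares in $k$ --- equivalently which of $u, v, uv$ are squares in $F$ --- controlling whether each of the two (possibly nested) residues genuinely produces a factor $2$; in each case those factors multiply to exactly $[L:F]$.

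For part \emph{ii)}, $(u\pi,v\delta)$ is a quaternion algebra, so $\mathrm{ind}(D)\mid 2\,\mathrm{ind}(D_0)$. For the matching lower bound I pass again to $F_\pi$. Using $(u\pi,v\delta) = (u,v\delta) + (v\delta,\pi)$, the class $D\otimes F_\pi$ equals $[\,\Gamma_0\otimes F_\pi + (u,v\delta)\,] + (v\delta,\pi)$ with the bracketed part $\nu_\pi$-unramified, and the residue of $(v\delta,\pi)$ is the class of $\overline{v\delta}$, which has \emph{odd} valuation in $\kappa(\pi)$ because $\bar\delta$ is a uniformizer of $R/(\pi)$; hence $D$ is ramified at $\nu_\pi$ with nontrivial residue character. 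Splitting it over the \emph{totally ramified} quadratic extension $\kappa(\pi)(\sqrt{\overline{v\delta}})$, whose residue field is still $k$, also kills $(u,v\delta)$ (since $\overline{v\delta}$ becomes a square there), leaving the unramified algebra $\Gamma_0\otimes\kappa(\pi)(\sqrt{\overline{v\delta}})$ of index $\mathrm{ind}(\Gamma_0\otimes k) = \mathrm{ind}(D_0)$. Thus $\mathrm{ind}(D\otimes F_\pi) = 2\,\mathrm{ind}(D_0)$ by the residue--index formula, and as this divides $\mathrm{ind}(D)$ the two bounds coincide.

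The hard part will be the exact evaluation of $\mathrm{ind}(D\otimes F_\pi)$ --- obtaining an equality rather than just a divisibility --- which is precisely where the structure theory of Brauer groups of complete discretely valued fields (applied iteratively to the two-dimensional local field $F_\pi$) is essential, together with the somewhat delicate bookkeeping of the squares and valuations of $u, v, \pi, \delta$ in the residue fields $\kappa(\pi)$ and $k$. Everything else --- the upper bounds and the index-preservation statement for unramified classes via Proposition \ref{reddy} --- is routine.
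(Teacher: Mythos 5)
Your strategy matches the paper's: pass to $F_\pi$ via Proposition \ref{reddy}, apply the residue--index formula there (this is precisely Lemma 2.1 of \cite{reddy}, which the paper cites), and iterate once more into the residue field $\kappa(\pi)$. The only cosmetic difference is that in part (ii) you evaluate $\mathrm{ind}(D\otimes F_\pi)$ by descending directly through $\kappa(\pi)(\sqrt{\overline{v\delta}})$, whereas the paper first reduces to $\mathrm{ind}(D_0\otimes F(\sqrt{v\delta}))$ and then reads that off via $F_\delta$; both routes are equivalent, and your remark about a case analysis on whether $\bar u,\bar v,\bar u\bar v$ are squares is subsumed by the degree factor $[F_\pi(\sqrt{\cdot}):F_\pi]$ built into \cite[Lemma 2.1]{reddy}, so no extra case-splitting is actually needed.
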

 
 \begin{proof}
 Let $\kappa(\pi)$ be the residue field at $\pi$.  Then $\kappa(\pi)$ is the field of fractions of 
the complete discrete valuation ring $R/(\pi)$ and the  the image $\overline{\delta}$  of $\delta$ in $\kappa(\pi)$ is a parameter. 
 
Suppose $D =  D_0   \otimes (u, \pi)  \otimes (v, \delta)$.  Since $D_0 \otimes (v, \delta)$ is unramified at $\pi$, 
 by (\cite[Lemma 2.1]{reddy}), we have ind$(D\otimes F_\pi) = $ ind$((D_0 \otimes (v, \delta) \otimes F_\pi(\sqrt{u}))  [F_\pi(\sqrt{u}] : F_\pi]$.
 Since $\Gamma_0 \otimes \kappa(\pi)$ is unramified on $R/(\delta)$, by (\cite[Lemma 2.1]{reddy}), we have 
 ind$( \Gamma_0    \otimes (\overline{v}, \overline{\delta}) \otimes \kappa(\pi)(\sqrt{\overline{u}}))   = $ 
 ind$(\Gamma_0 \otimes \kappa(\pi)(\sqrt{\overline{u}}, \sqrt{\overline{v}}))[\kappa(\pi)(\sqrt{\overline{u}}, \sqrt{\overline{v}}) : \kappa(\pi)(\sqrt{\overline{u}})]$. 
 Since $F_\pi$ is complete, we have 
 $${\text ind}((D_0 \otimes (v, \delta) \otimes F_\pi (\sqrt{u})) =  
 {\text ind}(D_0 \otimes F_{\pi}(\sqrt{u}, \sqrt{v})) [F_{\pi}(\sqrt{u}, \sqrt{v}) :  F_{\pi}(\sqrt{u})].$$
 Hence ind$(D\otimes F_\pi) = $ ind$((D_0 \otimes  F_\pi(\sqrt{u}, \sqrt{v}))  [F_\pi(\sqrt{u}, \sqrt{v}] : F_\pi]$.
 By (\ref{reddy}), we have ind$(D) = $ ind$(D\otimes F_\pi)$, ind$(D_0 \otimes F_\pi(\sqrt{u}, \sqrt{v})) = $ ind$((D_0 \otimes  F_\pi(\sqrt{u}, \sqrt{v})))$
 and $[F_\pi(\sqrt{u}, \sqrt{v}): F_\pi] = [F(\sqrt{u}, \sqrt{v}): F]$.
 Thus ind$(D) = $ ind$(D_0\otimes F(\sqrt{u}, \sqrt{v})) [ F(\sqrt{u}, \sqrt{v}): F]$.
 
 Suppose $D = D_0 \otimes (u\pi, v\delta)$. Then as above, we have ind$(D) =$ ind$(D_0 \otimes F(\sqrt{v\delta})) [F(\sqrt{v\delta}) : F]$.
 Since $D_0$ is unramified at $\delta$, we have ind$(D_0  \otimes F_\delta(\sqrt{v\delta})) = $ ind$(D_0  \otimes F_\delta)$. 
 Once again, by (\cite{reddy}), we have ind$(D_0  \otimes F(\sqrt{v\delta})) = $  ind$(D_0  \otimes F_\delta(\sqrt{v\delta})) = $ ind$(D_0  \otimes F_\delta)$ = 
 ind$(D_0 )$.  Since $[F(\sqrt{v\delta}) : F] = 2$, we have ind$(D) = $ 2 ind$(D_0)$.  
  \end{proof}
  
  We recall the following. 
 
\begin{lemma}
\label{wu-disc}
 (\cite[Lemma 3.6]{wu}) 
Let $R$ be a two dimensional 
 complete regular local ring  with residue field $k$, maximal ideal 
 $(\pi, \delta)$ and field of fractions $F$.
 Suppose that char$(k) \neq 2$. 
  Let $D $ be quaternion  division algebra over $F$ 
which is unramified on $R$ except possibly at $(\pi)$ and $(\delta)$. 
 Then $D$ is isomorphic to  one of the following: \\
i) $(u, w) $ \\
ii) $  (u, v\pi)$ or $ (u, v\delta)$ \\
iii)  $ (u, v\pi\delta)$\\
 iv)  $ (u\pi, v\delta)$ \\
for some units $u, v,w,  \in R$. 
\end{lemma}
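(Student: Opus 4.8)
Since $\mathrm{char}(k)\neq 2$, $D\in\Brt(F)$, and the plan is to first reduce $D$, modulo a short list of standard symbols in the regular parameters $\pi,\delta$, to a class pulled back from $k$, and then to use $\mathrm{ind}(D)=2$ to absorb everything into one quaternion symbol. For the reduction, $\partial_{(\pi)}(D)\in H^1(\kappa(\pi),\Z/2)=\kappa(\pi)^*/\kappa(\pi)^{*2}$; as $R/(\pi)$ is a complete discrete valuation ring with uniformizer $\overline{\delta}$ and residue field $k$, every square class of $\kappa(\pi)$ is $\overline{u}$ or $\overline{u\delta}$ with $u\in R^*$, and symmetrically for $\partial_{(\delta)}(D)$ using $\overline{\pi}$. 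The vanishing of the second residue of $D$ at the closed point $(\pi,\delta)$ (a reciprocity relation) forces the ``$\overline{\delta}$-part'' of $\partial_{(\pi)}(D)$ to agree with the ``$\overline{\pi}$-part'' of $\partial_{(\delta)}(D)$; denote this common bit by $\theta\in\{0,1\}$ and pick $u,v\in R^*$ with $\partial_{(\pi)}(D)=\overline{u\delta^{\theta}}$ and $\partial_{(\delta)}(D)=\overline{v\pi^{\theta}}$. Then $E:=(u,\pi)\otimes(v,\delta)\otimes(\pi,\delta)^{\theta}$ has the same residues as $D$ at $(\pi)$ and $(\delta)$ and is unramified at every other height-one prime of $R$, so $D\otimes E$ is unramified on $R$; by purity and $\Br(R)\cong\Br(k)$ (\cite{auslander}, 6.5), $D=D_0\otimes(u,\pi)\otimes(v,\delta)\otimes(\pi,\delta)^{\theta}$ in $\Br(F)$, where $D_0$ is the image of an Azumaya $R$-algebra with reduction $\mathfrak{D}_0\in\Brt(k)$.

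\textbf{The case $\theta=0$.}
Here Lemma \ref{index-cal}(i) gives $\mathrm{ind}(D)=\mathrm{ind}(D_0\otimes F(\sqrt u,\sqrt v))\cdot[F(\sqrt u,\sqrt v):F]$, so $\mathrm{ind}(D)=2$ forces $[F(\sqrt u,\sqrt v):F]\le 2$. If $u,v$ are both squares, then $D=D_0$ is a quaternion division algebra lifting a symbol $(\overline{a},\overline{b})$ over $k$, so $D\cong(a,b)$: case (i). If exactly one of them, say $u$, is a nonsquare, then $\mathrm{ind}(D_0\otimes F(\sqrt u))=1$, so $\mathfrak{D}_0\otimes k(\sqrt{\overline{u}})$ splits, hence $\mathfrak{D}_0\cong(\overline{u},\overline{c})$ with $c\in R^*$, and $D=(u,c)\otimes(u,\pi)=(u,c\pi)$, so $D\cong(u,c\pi)$: case (ii) (and $D\cong(u,c\delta)$ if $v$ is the nonsquare). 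If $u,v$ are nonsquares in the same square class, then $F(\sqrt u,\sqrt v)=F(\sqrt u)$, again $\mathfrak{D}_0\cong(\overline{u},\overline{c})$, and $(v,\delta)=(u,\delta)$, whence $D=(u,c)\otimes(u,\pi)\otimes(u,\delta)=(u,c\pi\delta)$ and $D\cong(u,c\pi\delta)$: case (iii).

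\textbf{The case $\theta=1$.}
Using $(u,\pi)\otimes(\pi,\delta)=(\pi,u\delta)$ in $\Br(F)$, set $A_1:=D_0\otimes(v,\delta)$, which is unramified at $(\pi)$; thus $D=A_1\otimes(\pi,u\delta)$. Since $u\delta$ is a unit at $(\pi)$ whose image in $\kappa(\pi)$ has odd valuation, $[F_\pi(\sqrt{u\delta}):F_\pi]=2$, and by Proposition \ref{reddy} with \cite[Lemma 2.1]{reddy}, $2=\mathrm{ind}(D)=\mathrm{ind}(D\otimes F_\pi)=2\cdot\mathrm{ind}(A_1\otimes F_\pi(\sqrt{u\delta}))$, so $A_1\otimes F_\pi(\sqrt{u\delta})$ splits. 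Reducing to residue fields, $\mathfrak{D}_0\otimes(\overline{v},\overline{\delta})$ is split by the ramified quadratic extension $\kappa(\pi)(\sqrt{\overline{u\delta}})$ of $\kappa(\pi)$, over which $\overline{\delta}\equiv\overline{u}$ modulo squares; since $\Br(k)$ injects into the Brauer group of any complete discretely valued field with residue field $k$, this forces $\mathfrak{D}_0\cong(\overline{u},\overline{v})$, i.e.\ $D_0=(u,v)$ in $\Br(F)$. A short computation with symbol identities then gives $D=(v\pi,u\delta)$ in $\Br(F)$, and as this quaternion algebra has index $2$ it is isomorphic to $D$: case (iv).

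\textbf{The main obstacle.}
The delicate point is the main reduction: producing the correcting symbol $E$ with $D\otimes E$ unramified everywhere on $R$. This rests on purity for the Brauer group of the $2$-dimensional regular local ring $R$ and, more subtly, on the reciprocity at the closed point, without which $\partial_{(\pi)}(D)$ and $\partial_{(\delta)}(D)$ need not be simultaneously representable by symbols of the allowed shapes. Once this is done, the index-$2$ bookkeeping in the two cases is routine, being driven entirely by Lemma \ref{index-cal} and Proposition \ref{reddy}.
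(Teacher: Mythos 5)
The paper cites this as \cite[Lemma 3.6]{wu} and gives no proof, so there is nothing internal to compare against; what you have done is reconstruct a proof. Your argument is correct, and it is methodologically the same as the paper's proof of the degree-$4$ analogue (Proposition \ref{disc-2dim}): it proceeds by (a) decomposing $D$ as an unramified class times explicit ramified symbols in $\pi,\delta$, and then (b) using the index computations of Lemma \ref{index-cal} and \cite[Lemma 2.1, Proposition 2.4]{reddy} together with Hensel lifting from $\Br(k)\cong\Br(R)$ to pin down the unramified part. The only difference is that the paper \emph{quotes} the decomposition from Saltman (\cite[Theorem 2.1]{saltman-cyclic}, \cite[Theorem 1.2]{saltman-divison}), whereas you re-derive it from residues, the closed-point reciprocity, and purity; your derivation is fine, and you correctly flag the reciprocity (residues of $\partial_{(\pi)}(D)$ and $\partial_{(\delta)}(D)$ at the closed point must cancel) as the nontrivial input making the correcting symbol $E$ exist. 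A citation for that complex-property/purity step (Saltman, or Kato's complex, or \cite{auslander} for purity) would make the write-up airtight, but the mathematics is right.

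Two small checks worth recording, both of which you handled correctly: in the $\theta=0$, both-squares case you implicitly use that $\mathrm{ind}(D_0)=\mathrm{ind}(\mathfrak{D}_0)$ because $R$ is complete local (as the paper itself does in Proposition \ref{deg8}, citing \cite[Theorem 6.5]{auslander}); and in the $\theta=1$ case the final symbol identity $(u,v)\otimes(u,\pi)\otimes(v,\delta)\otimes(\pi,\delta)=(u,v\pi)\otimes(v\pi,\delta)=(v\pi,u\delta)$ is indeed a valid chain, giving form (iv) with $u'=v$, $v'=u$, and the argument degenerates gracefully when $u$, $v$, or $D_0$ is trivial. One also needs the easy observation that $\theta=0$ with $u,v$ in distinct nonsquare classes would force $\mathrm{ind}(D)\ge 4$, which is why that subcase does not appear; you note this implicitly via the constraint $[F(\sqrt u,\sqrt v):F]\le 2$.
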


\begin{prop}
\label{disc-2dim}
Let $R$ be a two dimensional 
 complete regular local ring  with residue field $k$, maximal ideal 
 $(\pi, \delta)$ and field of fractions $F$.
 Suppose that char$(k) \neq 2$. 
  Let $D \in {}_2\text{Br}(F)$ be a division algebra over $F$ 
which is unramified on $R$ except possibly at $(\pi)$ and $(\delta)$. 
Suppose that ind$(D)  =  4$.   
Then $D$ is isomorphic to  one of the following: \\
i) $(u, w) \otimes (v, t)$ \\
ii) $(u, w) \otimes (v, t\pi)$ or $(u, w) \otimes (v, t\delta)$ \\
iii)  $(u, v) \otimes (w, t\pi\delta)$\\
iv) $(u, w\pi) \otimes (v, t\delta)$\\ 
v)  $(u, v) \otimes (w\pi, t\delta)$ \\
for some units $u, v,w, t \in R$. 
\end{prop}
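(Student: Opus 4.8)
The plan is to read off the ramification of $D$ on $\Spec(R)$, split off a quaternion (or biquaternion) algebra with the same residues, and then apply Lemmas \ref{deg4-u-field} and \ref{deg4-uv-field} to the remaining unramified piece. Since $D$ is unramified outside $(\pi)$ and $(\delta)$, it has exactly two residues, $\partial_{(\pi)}(D)\in\kappa(\pi)^{*}/\kappa(\pi)^{*2}$ and $\partial_{(\delta)}(D)\in\kappa(\delta)^{*}/\kappa(\delta)^{*2}$. The ring $R/(\pi)$ is a complete discrete valuation ring with uniformizer $\bar\delta$ and residue field $k$, and $\mathrm{char}(k)\neq 2$, so $\kappa(\pi)^{*}/\kappa(\pi)^{*2}$ is generated by the image of $R^{*}$ (which is $k^{*}/k^{*2}$) together with the class of $\bar\delta$; write $\partial_{(\pi)}(D)=\bar a\,\bar\delta^{\,\epsilon_1}$ with $a\in R^{*}$, $\epsilon_1\in\{0,1\}$, and similarly $\partial_{(\delta)}(D)=\bar b\,\bar\pi^{\,\epsilon_2}$. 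The compatibility of the two residues at the closed point of $R$ (a Faddeev/Kato-type reciprocity, i.e.\ the vanishing of the $\mathfrak m$-contribution to the ramification of $D$ on a regular model) forces $\epsilon_1=\epsilon_2$. This gives three cases: (A) $D$ unramified on all of $R$; (B) $\epsilon_1=\epsilon_2=0$ with $D$ ramified at $(\pi)$ or $(\delta)$; (C) $\epsilon_1=\epsilon_2=1$.

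In case (A), $D$ comes from an Azumaya $R$-algebra $\Gamma_0$. By Albert's theorem a central division algebra of degree $4$ and exponent $2$ is a tensor product of two quaternion algebras, so $D$ contains a quadratic subfield $F(\sqrt c)$ with $\mathrm{ind}(D\otimes F(\sqrt c))\leq 2$; modulo squares $c\in\{u,u\pi,u\delta,u\pi\delta\}$ for some $u\in R^{*}$. Completing at $(\pi)$ or $(\delta)$ and using Proposition \ref{reddy} (a totally ramified quadratic extension of $F_\pi$ or $F_\delta$ does not lower the index of the unramified algebra $D\otimes F_\pi$, resp.\ $D\otimes F_\delta$, which has index $4$) rules out the last three options, so $c\equiv u$ is a unit. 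Then Lemma \ref{deg4-u-field} gives $D=(u,v)\otimes(w,t)$ with $u,v,w,t\in R^{*}$: form i).

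In cases (B) and (C), I would choose $Q$ over $F$ with the same residues as $D$: in case (B) take $Q=(a,\pi)\otimes(b,\delta)$, where $a$ (resp.\ $b$) is replaced by $1$ if $D$ is unramified at $(\pi)$ (resp.\ $(\delta)$) and one arranges $a=b$ if the two residues agree in $k^{*}/k^{*2}$; in case (C) take $Q=(b\pi,a\delta)$. Using the residue formulas behind Lemma \ref{wu-disc}, $D_0:=D\otimes Q^{\mathrm{op}}$ is then unramified on all of $R$, hence comes from an Azumaya $R$-algebra. In case (C), Lemma \ref{index-cal}(ii) gives $4=\mathrm{ind}(D)=2\,\mathrm{ind}(D_0)$, so $\mathrm{ind}(D_0)=2$, and by Lemma \ref{wu-disc}(i) then $D_0=(u,v)$ with $u,v\in R^{*}$, so $D\cong(u,v)\otimes(b\pi,a\delta)$ is form v). In case (B), Lemma \ref{index-cal}(i) gives $4=\mathrm{ind}(D)=\mathrm{ind}\bigl(D_0\otimes F(\sqrt a,\sqrt b)\bigr)\,[F(\sqrt a,\sqrt b):F]$, and since $D$ is ramified the last factor is $2$ or $4$. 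If it is $4$, then $D_0\otimes F(\sqrt a,\sqrt b)$ splits, Lemma \ref{deg4-uv-field} gives $D_0=(a,w)\otimes(b,t)$, and $D\cong(a,w\pi)\otimes(b,t\delta)$ is form iv). If it is $2$, then $F(\sqrt a,\sqrt b)=F(\sqrt e)$ for the relevant non-square $e\in\{a,b\}$, so $\mathrm{ind}(D_0\otimes F(\sqrt e))=2$, Lemma \ref{deg4-u-field} gives $D_0=(e,c')\otimes(c'',d')$, and the identity $(x,y)\otimes(x,y')=(x,yy')$ rewrites $D$ as form ii) (if $D$ is ramified at only one of $(\pi),(\delta)$) or form iii) (if at both, with necessarily equal residues).

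I expect the main obstacle to be the two places where the hypothesis $\mathrm{ind}(D)=4$ must be converted into control of the Azumaya core: establishing the reciprocity relation $\epsilon_1=\epsilon_2$ that forbids mixed residue patterns in the first step, and the index bookkeeping in case (B), where one must check that $\mathrm{ind}(D)=4$, via Lemma \ref{index-cal}(i) and Proposition \ref{reddy}, forces $D_0$ to have index at most $2$ with exactly the right quadratic splitting field $F(\sqrt a,\sqrt b)$. The remaining work is routine: matching residues against the list in Lemma \ref{wu-disc} to pick $Q$, and using $(x,y)=(y,x)$ and $(x,y)\otimes(x,y')=(x,yy')$ to put products of two or three quaternion symbols into the five normal forms.
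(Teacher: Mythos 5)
Your overall strategy --- strip ramification off via a symbol $Q$ with the same residues, conclude $D_0=D\otimes Q^{\mathrm{op}}$ is unramified, then feed the index computation from Lemma \ref{index-cal} into Lemmas \ref{deg4-u-field} and \ref{deg4-uv-field} --- is essentially the route the paper takes. Where the paper invokes Saltman's classification (\cite[Theorem 2.1]{saltman-cyclic}, \cite[Theorem 1.2]{saltman-divison}) to list the possible ramification patterns, you reconstruct them from the two residues plus a reciprocity $\epsilon_1=\epsilon_2$ at the closed point; that fact is exactly what Saltman's theorems encode, so your cases (B) and (C) line up with the paper's sub-cases giving forms ii)--v), and the symbol manipulations you use to normalize the factors are correct.

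The genuine gap is in case (A). Albert's theorem over $F$ gives you \emph{some} quadratic subfield $F(\sqrt c)$ over which $D$ has index at most $2$, and you then assert that ``modulo squares $c\in\{u,u\pi,u\delta,u\pi\delta\}$ for some $u\in R^{*}$.'' This is not automatic: $F^{*}/F^{*2}$ is far larger than the subgroup generated by $R^{*}$, $\pi$, $\delta$ (take $c$ to be an irreducible element of $R$ supported on some other height-one prime), and Albert's decomposition over $F$ gives no control on which quadratic subfield you get. Your subsequent appeal to Proposition \ref{reddy} to rule out the three ramified options is therefore premature --- you must first produce a \emph{unit} $c$. The paper does this directly over the residue field: take an Azumaya model $\Gamma$, observe $\mathrm{ind}(\Gamma\otimes k)=4$, apply Albert (equivalently Lemma \ref{overk}) over $k$, where every element is a unit mod squares, to write $\Gamma\otimes k=(a,b)\otimes(c,d)$, lift the entries to units of $R$, and use $\Br(R)\cong\Br(k)$. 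Equivalently, you could get the unit $c$ from the residue algebra via \cite[Lemma 1.1]{reddy} and hand it to Lemma \ref{deg4-u-field}. Either way, the detour through Albert over $F$ together with Proposition \ref{reddy} does not close this gap as written.
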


\begin{proof}
Suppose that $D$ is unramified on $R$.   Let $\Gamma$ be an Azumaya algebra on $R$ with $\Gamma \otimes F \simeq D$ (\cite[Theorem 7.4]{auslander}).
Since ind$(D) = 4$, we have ind$(\Gamma \otimes k) = 4$. Hence $\Gamma \otimes k = (a, b) \otimes (c, d)$ for some $a, b, c, d \in k^*$.
Let $u, w, v, t \in R$ be lifts of $a, b, c, d$. Since $R$ is complete, we have $\Gamma = (u, w) \otimes (v, t) \in \text{Br}(R)$ and 
hence $D = (u, w) \otimes (v, t) \in \text{Br}(F)$.  Since  deg$(D) = 4$, $D \simeq (u, w) \otimes (v, t)$.

 Suppose that $D$ is ramified only at $\langle \pi \rangle$. 
 Then, by Saltman's classification (\cite{wu}, Proposition 3.5), 
  we have $D = D_0 \otimes  (u,\pi)$, where $D_0$ is unramified on $R$ and $u \in R$ a unit which is not a square. 
  Since $D = D_0 \otimes (u, \pi) \otimes (1, \delta)$, 
  by (\ref{index-cal}(i)), we have ind$(D) = 2 $ ind$(D_0 \otimes F(\sqrt{u}))$. 
  Since ind$(D) = 4$,  we have ind$(D_0 \otimes F(\sqrt{u}) ) = 2$.  
 Hence, by (\ref{deg4-u-field}), we have $D_0 = (u, w) \otimes (v, t)$  for some $u, v, w, t \in R$ units. 
 We have $D = D_0 \otimes (u, \pi) = (u, w) \otimes (v, t) \otimes (u, \pi) = (u, w\pi) \otimes (v, t)$.  
 Since ind$(D) = 4$, $D \simeq (u, w\pi) \otimes (v, t)$. 
 Similarly if  $D$ is ramified only at $\langle \delta \rangle$, then $D \simeq (u, w\delta) \otimes (v, t)$.

Suppose that $D$ is ramified both at $\langle \pi \rangle$ and $\langle \delta \rangle$.
Then by (\cite[Theorem 2.1]{saltman-cyclic} \& \cite[Theorem 1.2]{saltman-divison}), we have $ D = D_0 \otimes (u, \pi) \otimes (v,\delta)$ or $D = D_0  \otimes (w\pi, t \delta)$
 for some $u, v \in R - R^2$ units, $w, t\in R$ units  and $D_0$ unramified on $R$.

Suppose   $D = D_0  \otimes (u, \pi)  \otimes (v,\delta) $.  Suppose that $uv \in R$ is a square.
Then $D = D_0  \otimes (u, \pi \delta)$.   By (\ref{index-cal}(i)), we have  ind$(D) = $  2 ind$(D_0\otimes F(\sqrt{u}))$  and hence 
ind$(D_0\otimes F(\sqrt{u})) = 2$. Thus, by  (\ref{deg4-u-field}), we have $D_0 = (u, w) \otimes (v, t)$  for some $u, v, w, t \in R$ units. 
In particular  $D \simeq (v, t ) \otimes (u,  w\pi\delta)$. 

Suppose that $uv$ not a square in $R$. By (\ref{index-cal}(i)), we have  ind$(D) = $  4 ind$(D_0\otimes F(\sqrt{u}))$  and hence 
ind$(D_0\otimes F(\sqrt{u})) = 1$. 
Hence $D_0\otimes F(\sqrt{u}, \sqrt{v})$ is a matrix algebra.
Thus, by (\ref{deg4-uv-field}), we have $D_0 = (u, w) \otimes (v, t)$ for some units $w, t \in R$.
In particular $D \simeq (u, w\pi) \otimes (v, t\delta)$.

Suppose $D = D_0 + (w\pi, t \delta)$.    By (\ref{index-cal}(ii)), we have  ind$(D) = $  2 ind$(D_0)$
and hence ind$(D_0) = 2$. Thus $D_0 = (u, v)$ and $D \simeq (u, v) \otimes (w\pi, t\delta)$. 
\end{proof}

\begin{prop}
\label{deg8}
Let $R$ be a two dimensional 
 complete regular local ring  with residue field $k$, maximal ideal 
 $(\pi, \delta)$ and field of fractions $F$.
 Suppose that char$(k) \neq 2$ and every central simple algebra  in ${}_2\text{Br}(k)$ has index at most 2. 
  Let $D \in {}_2\text{Br}(F)$ be a division algebra over $F$ 
which is unramified on $R$ except possibly at $(\pi)$ and $(\delta)$.  Then ind$(D) \leq 8$.
Further if ind$(D) = 8$, then $D \simeq (w, t) \otimes (u, \pi) \otimes (v, \delta)$ for some units $u, v, w, t \in R$. 
\end{prop}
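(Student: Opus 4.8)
The plan is to mirror the proof of Proposition~\ref{disc-2dim}, splitting into cases according to the ramification locus of $D$ on $R$ and controlling indices with Lemma~\ref{index-cal}. The recurring input concerns an unramified piece: if $D_0 \in {}_2\mathrm{Br}(F)$ is unramified on $R$, then $D_0 = \Gamma_0 \otimes_R F$ for an Azumaya algebra $\Gamma_0$ over $R$ (\cite[Theorem~7.4]{auslander}), and since $R$ is complete regular local one has $\mathrm{ind}(D_0) = \mathrm{ind}(\Gamma_0 \otimes_R k)$ with $\Gamma_0 \otimes_R k \in {}_2\mathrm{Br}(k)$; the hypothesis then gives $\mathrm{ind}(D_0) \le 2$. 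If moreover $\mathrm{ind}(D_0) = 2$, then $\Gamma_0 \otimes_R k$ is a quaternion division algebra over $k$, hence $\simeq (a,b)$ for some $a,b \in k^\times$; lifting $a,b$ to units $w,t \in R^\times$ and using $\mathrm{Br}(R) \cong \mathrm{Br}(k)$ (\cite{auslander}), one gets $D_0 \simeq (w,t)$.

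First I would handle the cases with at most one ramified prime. If $D$ is unramified on $R$, the remark above already gives $\mathrm{ind}(D) \le 2$. If $D$ is ramified only at $\langle\pi\rangle$ (the case $\langle\delta\rangle$ being symmetric), Saltman's classification (\cite[Proposition~3.5]{wu}) gives $D = D_0 \otimes (u,\pi)$ with $D_0$ unramified on $R$ and $u \in R^\times$, so $\mathrm{ind}(D)$ divides $\mathrm{ind}(D_0)\cdot \mathrm{ind}((u,\pi)) \le 4$. If $D$ is ramified at both $\langle\pi\rangle$ and $\langle\delta\rangle$, then by \cite[Theorem~2.1]{saltman-cyclic} and \cite[Theorem~1.2]{saltman-divison} either $D = D_0 \otimes (w\pi, t\delta)$ or $D = D_0 \otimes (u,\pi)\otimes(v,\delta)$ with $D_0$ unramified on $R$ and $u,v,w,t \in R^\times$ ($u,v$ non-squares). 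In the first case Lemma~\ref{index-cal}(ii) gives $\mathrm{ind}(D) = 2\,\mathrm{ind}(D_0) \le 4$. In the second case Lemma~\ref{index-cal}(i) gives $\mathrm{ind}(D) = \mathrm{ind}(D_0 \otimes F(\sqrt u,\sqrt v))\,[F(\sqrt u,\sqrt v):F]$, which is $\le 2\cdot 4 = 8$ since $\mathrm{ind}(D_0 \otimes F(\sqrt u,\sqrt v)) \le \mathrm{ind}(D_0) \le 2$ and $[F(\sqrt u,\sqrt v):F] \le 4$. This proves $\mathrm{ind}(D) \le 8$ in all cases.

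Next, suppose $\mathrm{ind}(D) = 8$. By the previous paragraph $D$ must fall into the last case, $D = D_0 \otimes (u,\pi)\otimes(v,\delta)$, and the equality $8 = \mathrm{ind}(D_0 \otimes F(\sqrt u,\sqrt v))\,[F(\sqrt u,\sqrt v):F]$ forces $[F(\sqrt u,\sqrt v):F] = 4$ (so $u$, $v$, $uv$ are all non-squares) and $\mathrm{ind}(D_0 \otimes F(\sqrt u,\sqrt v)) = 2$; in particular $\mathrm{ind}(D_0) = 2$. By the first paragraph $D_0 \simeq (w,t)$ for some units $w,t \in R^\times$, hence $D$ is Brauer-equivalent to $(w,t)\otimes(u,\pi)\otimes(v,\delta)$. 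Since $D$ is a division algebra of degree $8$ and $(w,t)\otimes(u,\pi)\otimes(v,\delta)$ is a central simple $F$-algebra of degree $8$, they are isomorphic, i.e.\ $D \simeq (w,t)\otimes(u,\pi)\otimes(v,\delta)$, as claimed.

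The work here is bookkeeping rather than conceptual. The step needing care is invoking Saltman's structure theorems in precisely the form appropriate for period-$2$ division algebras ramified at two regular parameters, together with keeping the degenerate possibilities straight (for instance $uv$ a square collapses $D = D_0 \otimes (u,\pi\delta)$ to a single ramified prime and forces $\mathrm{ind}(D) \le 4$), so that the index-$8$ case indeed lands in the stated normal form. The index identities of Lemma~\ref{index-cal} carry the real weight; the remainder is reduction to the residue field $k$ via completeness of $R$ and the index bound in ${}_2\mathrm{Br}(k)$.
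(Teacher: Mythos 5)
Your proof is correct and follows essentially the same route as the paper: split into cases by the ramification locus, invoke Saltman's structure theorems, bound the unramified piece using $\mathrm{Br}(R)\cong\mathrm{Br}(k)$ and the hypothesis on $k$, and then read off the normal form when the index is $8$. The only difference is stylistic: you run the sharp index identities of Lemma~\ref{index-cal} throughout, whereas the paper's own proof uses the cruder bound $\mathrm{ind}(D)\le\mathrm{ind}(D_0)\cdot\mathrm{ind}(\text{ramified part})$ to get $\mathrm{ind}(D)\le 8$; your version has the small advantage of making it explicit that the case $D = D_0\otimes(w\pi,t\delta)$ (and the degenerate subcase where $uv$ is a square) cannot produce index $8$, which the paper treats implicitly.
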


\begin{proof}
Suppose that $D$ is unramified on $R$.   
 Let $\Gamma$ be an Azumaya algebra on $R$ with $\Gamma \otimes F \simeq D$ (\cite[Theorem 7.4]{auslander}).
 By the assumption on $k$,  ind$(\Gamma \otimes k) \leq 2$.   Since $R$ is complete, we have  ind$(D) \leq 2$ (\cite[Theorem 6.5]{auslander}).

Suppose that $D$ is ramified only at $\langle \pi \rangle$. 
 Then, by Saltman's classification (\cite{wu}, Proposition 3.5), 
  we have $D = D_0 \otimes  (u,\pi)$, where $D_0$ is unramified on $R$ and $u \in R$ a unit which is not a square. 
  Since $D_0$ is unramified on $R$, by the assumption on $k$,  ind$(D_0)  \leq 2$ and hence ind$(D) \leq 4$. 
  Similarly  if $D$ is ramified only at $\langle \delta \rangle$, then ind$(D) \leq 4$.

Suppose that $D$ is ramified both at $\langle \pi \rangle$ and $\langle \delta \rangle$.
Then by (\cite[Theorem 2.1]{saltman-cyclic} \& \cite[Theorem 1.2]{saltman-divison}), we have $ D = D_0 \otimes (u, \pi) \otimes (v,\delta)$ or $D = D_0  \otimes (w\pi, t \delta)$
 for some $u, v \in R - R^2$ units, $w, t\in R$ units  and $D_0$ unramified on $R$. 
 Since  $D_0$ is unramified on $R$, by the assumption on $k$,  ind$(D_0)  \leq 2$.
In particular ind$(D) \leq 8$.

Suppose ind$(D) =8$.  Then $D =  D_0 \otimes (u, \pi) \otimes (v,\delta)$ for some units  $u, v \in R $ units   and $D_0$ unramified on $R$.
Once again by the assumption on $k$, we have $D_0 = (w, t)$ for some units $w,  t \in R$.
Hence $D \simeq  (w, t)  \otimes (u, \pi) \otimes (v,\delta)$. 
\end{proof}

\section{Two torsion division algebras  with involution of second  kind over two dimensional local fields }
\label{disc2}

Let $R_0$ be a 2-dimensional complete regular local ring with maximal ideal $\mathfrak{m}_0 = (\pi_0,\delta_0)$ and residue field $k_0$.
Suppose that char$(k_0) \neq 2$. 
Let $F_0$ be the field of fractions of $R_0$ and let $F = F_0(\sqrt{\lambda})$ be an extension of degree 2, with $\lambda $ a unit in $R_0$  or 
 $\lambda = w\pi_0$ for some unit $w \in R_0$.  
Let    $D \in {}_2\text{Br}(F)$ be a division algebra  with  $F/F_0$-involution $\sigma$.  
In this section we show that  if ind$(D) = 4$, then 
  $D$ is   a tensor product of  two quaternion  algebras with some properties.   
  Suppose  that for any central simple algebra  $A\in  {}_2\text{Br}(k)$,   ind$(A) \leq 2$. Then we show that ind$(D) \leq 8$. 
  Further we show that  if  ind$(D) = 8$, then $D$  is isomorphic to a  tensor product of three quaternion algebras with some properties.  
  
  Let $R$ be the integral closure of $R_0$ in $F$. 
By the assumption on $\lambda$, $R$ is a 2-dimensional regular local ring with maximal ideal $\mathfrak{m} = (\pi,\delta)$ (\cite[Theorem 3.1, 3.2]{parimala-uinvariant}), where;
 if  $\lambda $ is a unit in $R_0$, then $\pi = \pi_0$ and $\delta = \delta_0$ and  if $\lambda = w\pi_0 $, then $\pi = \sqrt{\lambda}$ and $\delta = \delta_0$. 

 \begin{prop}
\label{ramified}
 Let $R$ and $F$ be as above.  Let  $D \in {}_2 \text{Br}(F)$ with an $F/F_0$ involution. 
 Suppose that $D$ is unramified on $R$ except possibly at $(\pi)$ and $(\delta)$.  
 If $\lambda = w\pi_0$ for some unit $w \in R_0$, then $D =  (D_0 \otimes (v_0, \delta_0)) \otimes_{F_0} F$ for some 
$D_0 \in   {}_2\text{Br}(F_0)$  which is unramified on $R_0$ and $v_0 \in R_0$ a unit. 
\end{prop}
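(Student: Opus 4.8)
The plan is to use only one consequence of the involution hypothesis, namely the Albert--Riehm--Scharlau characterisation that a central simple $F$-algebra of even degree carries an $F/F_0$-involution of the second kind if and only if its corestriction $\cores_{F/F_0}([D])$ vanishes in $\Br(F_0)$ (see \cite[Chapter 8]{scharlau}); so throughout we work with $\cores_{F/F_0}([D]) = 0$. Recall that when $\lambda = w\pi_0$ we have $\pi = \sqrt{\lambda}$, $\delta = \delta_0$, that $(\pi)$ is the only prime of $R$ lying over $(\pi_0)$, with $R/(\pi) \isom R_0/(\pi_0)$, and that $R$ and $R_0$ share the residue field $k = k_0$.

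First I would show that $D$ is unramified at $(\pi)$. Passing to completions, $F_\pi$ is the totally ramified quadratic extension $E(\sqrt{w\pi_0})$ of $E := (F_0)_{\pi_0}$, with uniformiser $\pi$ and residue field equal to that of $E$. Writing $D \tensor_F F_\pi = \beta' \tensor (c',\pi)$ with $\beta'$ unramified and $c'$ a unit, so that $\partial_\pi(D) = \overline{c'}$, one has $\cores_{F_\pi/E}(\beta') = 0$ (an unramified class over $F_\pi$ is restricted from one over $E$ since the residue fields agree, and $\cores\circ\mathrm{res}$ is multiplication by $2$, which kills it), while the projection formula gives $\cores_{F_\pi/E}\big((c',\pi)\big) = (c')\cup N_{F_\pi/E}(\pi) = (c',-w\pi_0)_E = (c',-w)_E + (c',\pi_0)_E$. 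Since $\cores_{F/F_0}([D]) = 0$, restricting to $E$ and taking the residue at $\pi_0$ forces $\overline{c'} = 1$, i.e. $\partial_\pi(D) = 1$. This is the one place where the involution hypothesis is genuinely used; note also that being unramified at $(\pi)$ is necessary for descent, because $\pi_0$ differs from a square by a unit in $F$, so $\mathrm{res}_{F/F_0}$ lands inside the classes that are unramified at $(\pi)$.

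Next I would descend along $F/F_0$. Now $D$ is unramified on $R$ except possibly at $(\delta)$, so by Saltman's one-ramified-prime structure theorem (\cite{wu}, Proposition 3.5) one may write $D = D_1 \tensor (v,\delta)$ in $\Br(F)$ with $D_1$ unramified on $R$ and $v \in R^*$. Since $D_1$ comes from an Azumaya algebra $\Gamma_1$ on $R$, its reduction lies in $\Brt(k) = \Brt(k_0)$; lifting this reduction to an Azumaya algebra $\Gamma_0$ over $R_0$ produces $D_0 := \Gamma_0 \tensor_{R_0} F_0 \in \Brt(F_0)$, unramified on $R_0$, with $D_0 \tensor_{F_0} F = D_1$ in $\Br(F)$ (both sides are unramified on $R$ with equal reduction, and $\Br(R) \isom \Br(k)$). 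Because $R^*/R^{*2} \isom k_0^*/k_0^{*2} \isom R_0^*/R_0^{*2}$, there is a unit $v_0 \in R_0$ with $v \equiv v_0$ modulo squares in $R$, whence $(v,\delta)_F = (v_0,\delta_0)_F = (v_0,\delta_0)_{F_0}\tensor_{F_0} F$ (using $\delta = \delta_0$). Combining, $D = \big(D_0 \tensor (v_0,\delta_0)\big)\tensor_{F_0} F$ in $\Br(F)$, with $D_0 \in \Brt(F_0)$ unramified on $R_0$ and $v_0 \in R_0$ a unit, as claimed. The only delicate point is the corestriction--residue computation at the ramified prime in the first step; everything afterwards is the standard Saltman structure theory together with the fact that the Brauer group and the group of square classes of a complete regular local ring are detected on the residue field.
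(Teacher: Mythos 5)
Your proof is correct and takes essentially the same approach as the paper: show $D$ is unramified at $(\pi)$, apply Saltman's decomposition to write $D = D' \otimes (v,\delta)$, then descend the Azumaya algebra and the unit across $R/R_0$ using equality of residue fields and completeness. The only difference is that the paper obtains the unramifiedness at $(\pi)$ by citing \cite[Lemma 6.3]{parimala2020localglobal}, whereas you prove it inline via the corestriction--residue computation; the content is the same.
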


\begin{proof}  Since $F/F_0$ is ramified at $\pi_0$,  by (\cite[Lemma 6.3]{parimala2020localglobal}), $D$ is unramified at $\pi$.
Hence, by(\cite[Proposition 3.5]{wu}),  $D  = D' \otimes  (v,  \delta)$ for some   unit $v \in R$  and $D'$ unramified on $R$.
Let $\Gamma'$ be an Azumaya $R$-algebra with $\Gamma'  \otimes F \simeq D'$.
Since $R/\mathfrak{m} = R_0/\mathfrak{m}_0$ and $R$ is complete, there exists an Azumaya $R_0$-algebra $\Gamma_0$  with 
$\Gamma'  \simeq \Gamma_0 \otimes R$.  Let $D_0 = \Gamma_0 \otimes F_0$. 
Since $R/\mathfrak{m}  =  R_0/\mathfrak{m}_0$, we have $v = v_0 v_1^2$ for some $v_1 \in R_0$ a unit.
Since $\delta = \delta_0$, we have 
$D  =  (D_0 \otimes (v_0, \delta_0)) \otimes F$ as required. 
\end{proof}
 
 For the rest of the section, we assume that $\lambda   \in R_0$ is a unit. 
 In particular $\pi= \pi_0$ and $\delta = \delta_0$ and $F/F_0$ is unramified on $R_0$. 
 Let $\tau$ denote the non trivial automorphism of $F/F_0$. 
 
 \begin{prop}
 \label{components-invol}
  Let $\Gamma'$ be an Azumaya $R$-algebra and $D' = \Gamma'  \otimes F$. Suppose that  $D' \in \Brt(F)$.
 Let $D = D' \otimes (u, \pi) \otimes (v, \delta)$  (resp. $ D =  D'  \otimes (u\pi, v\delta))$.
 If $D$ has a $F/F_0$-involution, then $D'$, $(u, \pi)$ and $(v, \delta)$  (resp. $D'$ and  $(u\pi, v \delta)$) have 
 $F/F_0$-involutions.  
 \end{prop}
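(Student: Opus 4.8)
The plan is to reduce everything to a corestriction computation. By the theorem of Albert--Riehm--Scharlau (cf.\ \cite[Chapter~8]{scharlau}), a central simple $F$-algebra $B$ admits an involution of the second kind fixing $F_0$ if and only if $\cores_{F/F_0}([B]) = 0$ in $\Br(F_0)$. Since corestriction is additive on Brauer classes, the hypothesis $\cores_{F/F_0}([D]) = 0$ becomes
$$\cores_{F/F_0}([D']) + \cores_{F/F_0}\!\bigl((u,\pi)\bigr) + \cores_{F/F_0}\!\bigl((v,\delta)\bigr) = 0$$
in $\Br(F_0)$ (and the analogous two-term identity in the second case), so it suffices to show each summand vanishes. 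The guiding idea is that these classes have disjoint ramification on $\Spec R_0$, which lets one peel them apart.

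For the bookkeeping I would use that, since $\lambda$ is a unit, $\pi = \pi_0$ and $\delta = \delta_0$ lie in $F_0^\times$ and $F/F_0$ is unramified on $R_0$; hence $\cores_{F/F_0}$ commutes with the residue maps $\residue_{\mathfrak p_0}$ at height-one primes $\mathfrak p_0$ of $R_0$. Since $D' = \Gamma'\otimes F$ is unramified on all of $R$, the class $\cores_{F/F_0}([D'])$ is unramified on all of $R_0$, hence by purity and completeness (\cite[Theorems~6.5 and~7.4]{auslander}) it lies in the image of $\Br(k_0)\to\Br(F_0)$. By the projection formula, applied with $\pi_0,\delta_0\in F_0^\times$,
$$\cores_{F/F_0}\!\bigl((u,\pi)\bigr) = \bigl(N_{F/F_0}(u),\,\pi_0\bigr)_{F_0},\qquad \cores_{F/F_0}\!\bigl((v,\delta)\bigr) = \bigl(N_{F/F_0}(v),\,\delta_0\bigr)_{F_0},$$
and these are unramified on $R_0$ away from $(\pi_0)$, resp.\ away from $(\delta_0)$.

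Now apply $\residue_{(\pi_0)}$ to the first displayed identity: the $[D']$-term and the $(v,\delta)$-term vanish there, so $\bigl(N_{F/F_0}(u),\pi_0\bigr)_{F_0}$ is unramified at $(\pi_0)$ as well, i.e.\ the image of $N_{F/F_0}(u)$ in $\kappa(\pi_0)^\times$ is a square. As $N_{F/F_0}(u)$ is a unit of $R_0$ and $\mathrm{char}(k_0)\neq 2$, Hensel's lemma shows it is a square in the completion $F_{0,(\pi_0)}$, so $\bigl(N_{F/F_0}(u),\pi_0\bigr)_{F_0}$ splits over $F_{0,(\pi_0)}$; being also unramified on $R_0$, it comes from $\Br(R_0/(\pi_0)) = \Br(k_0)$, and since $\Br(k_0)\hookrightarrow\Br(\kappa(\pi_0))$ (Witt's exact sequence for the complete discrete valuation ring $R_0/(\pi_0)$) it must be $0$. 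Thus $(u,\pi)$ has an $F/F_0$-involution, and symmetrically so does $(v,\delta)$; feeding this back, $\cores_{F/F_0}([D']) = 0$, so $D'$ does too, which settles the first case. For the second case one runs the same argument after expanding $(u\pi,v\delta) = (u,v)\otimes(u,\delta)\otimes(\pi,v)\otimes(\pi,\delta)$ in $\Brtwo(F)$: the factor $(\pi,\delta)_F$ is extended from $F_0$, so its corestriction is $2(\pi_0,\delta_0)_{F_0}=0$; $\cores_{F/F_0}((u,\delta)) = (\delta_0,N_{F/F_0}(u))_{F_0}$ and $\cores_{F/F_0}((\pi,v)) = (\pi_0,N_{F/F_0}(v))_{F_0}$ are each ramified over a single prime; and $D'\otimes(u,v)$ is unramified on $R$. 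The same residue-and-purity bookkeeping kills the partly-ramified pieces, reducing the claim to $\cores_{F/F_0}((u,v)_F) = 0$, which holds because the unramified factor of this algebra is defined over $F_0$ (the decomposition of \S\ref{disc2} allowing $u,v$ to be taken in $R_0$); this gives $\cores_{F/F_0}([D']) = \cores_{F/F_0}((u\pi,v\delta)) = 0$.

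The step I expect to be the main obstacle is precisely this ramification bookkeeping over a two-dimensional complete regular local ring with an arbitrary residue field $k_0$: one must track corestrictions of symbols whose entries need not all lie in $F_0$ — most delicately the unramified symbol $(u,v)_F$ in the mixed case — and one must justify the purity, Witt-sequence, and $\Br(R_0) \isom \Br(k_0)$ inputs uniformly, including in mixed characteristic.
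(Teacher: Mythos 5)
Your treatment of the first case is correct and takes a genuinely different, more self-contained route than the paper. Where the paper invokes \cite[Lemma 6.4]{parimala2020localglobal} (a splitting result for corestriction over the completions $F_\delta/F_{0\delta_0}$, $F_\pi/F_{0\pi_0}$) together with Proposition~\ref{reddy} to push vanishing at the completion back to $F_0$, you obtain the same conclusion directly from residue maps, purity, and completeness: the residue of $\cores_{F/F_0}([D])$ at $(\pi_0)$ isolates the class of $N_{F/F_0}(u)$ in $\kappa(\pi_0)^\times/\kappa(\pi_0)^{\times 2}$, so it is a square there, and then, since $N_{F/F_0}(u)$ is a unit of the complete local ring $R_0$ and $\mathrm{char}(k_0)\neq 2$, it is a square in $R_0$ and $(N_{F/F_0}(u),\pi_0)=0$; symmetrically $(N_{F/F_0}(v),\delta_0)=0$, and $\cores_{F/F_0}([D'])=0$ by subtraction. (Your detour through $\Br(\kappa(\pi_0))$ can in fact be shortened to this direct Hensel lift in $R_0$, which is precisely the mechanism the paper uses in Lemma~\ref{decent}.)

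Your handling of the second case, however, has a circular gap. After expanding $(u\pi,v\delta)$ and killing $(N_{F/F_0}(u),\delta_0)$ and $(\pi_0,N_{F/F_0}(v))$ by the same residue-and-Hensel argument, you are left with the relation $\cores_{F/F_0}([D'])+\cores_{F/F_0}\bigl((u,v)\bigr)=0$ between two \emph{unramified} classes in $\Br(F_0)$, both living in $\Br(R_0)\cong\Br(k_0)$. To separate them you assert $\cores_{F/F_0}\bigl((u,v)\bigr)=0$ ``because \dots{} $u,v$ [can] be taken in $R_0$,'' citing ``the decomposition of \S\ref{disc2}.'' But the relevant descent statement is Lemma~\ref{decent}, whose hypothesis is that $(u,v)$, resp.\ $(u\pi,v\delta)$, \emph{already} has an $F/F_0$-involution --- exactly what you are trying to prove, and indeed Corollary~\ref{disc-2dim-second} deduces the $R_0$-form of the symbols \emph{from} the present Proposition. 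As written the argument is therefore circular, and the equation $\cores([D'])=\cores\bigl((u,v)\bigr)$ by itself does not force either side to vanish. The gap is fixable with what you already have: having shown $N_{F/F_0}(u)$ and $N_{F/F_0}(v)$ are squares in $R_0$, Hilbert~90 produces $\theta\in R^\times$ with $u\,\tau(\theta)^2\in R_0^\times$, and since multiplying a slot by a square does not change a quaternion symbol, both $(u,v)$ and $(u\pi,v\delta)$ are then extended from $F_0$, hence of trivial corestriction, and $\cores_{F/F_0}([D'])=0$ follows. You should carry out this Hilbert~90 replacement explicitly inside the proof rather than appeal to the downstream descent result.
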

 
 \begin{proof} Suppose $D = D' \otimes (u, \pi) \otimes (v, \delta)$ has a $F/F_0$-involution. 
 Since cores$_{F/F_0}(D) = 0$,   by (\cite[Lemma 6.4]{parimala2020localglobal}),   
 cores$_{F_\delta/F_{0\delta_0}}(D'\otimes (u, \pi)) = 0$  and cores$_{F_\delta/F_{0\delta_0}}(v, \delta) = 0$.
 Since $D'$ is unramified on $R$, cores$_{F/F_0}(D')$ is unramified on $R_0$. 
 Since $\pi = \pi_0 \in R_0$  and $\delta= \delta_0 \in R_0$, 
 cores$_{F /F_{0 }}(u, \pi) = (N_{F/F_0}(u), \pi_0) \otimes F_{0}$ and
 cores$_{F/F_0}(v, \delta) = (N_{F/F_0}(v), \delta_0) \otimes F_{0}$.
 In particular  cores$_{F /F_{0 }} ( D' \otimes (u, \pi))$ and cores$_{F/F_0}(v,\delta)$ are unramified on 
 $R_0$ except possibly at $(\pi_0)$. 
  Hence, by (\ref{reddy}), cores$_{F/F_0}(D' \otimes (u, \pi) ) = 0$  and cores$_{F /F_0}(v, \delta) = 0$.
 The same argument implies that cores$_{F/F_0}(D' ) = 0$  and cores$_{F /F_0}(u, \pi) = 0$.
 Hence $D'$, $(u, \pi)$ and $(v, \delta)$ have $F/F_0$-involutions. 
 
 The case $D = D' \otimes (u\pi, v\delta)$ is similar. 
 \end{proof}
  
  \begin{lemma}
  \label{decent} Let 
  $D_1 = (u, \pi)$ (resp. $(u\pi, v \delta)$,  $(u, v)$)  for some units $u, v \in R$. If $D_1$ has a $F/F_0$ involution, then 
  $D_1 = (u_0, \pi)$ (resp. $(u_0\pi, v_0 \delta)$,  $(u_0, v_0)$) for some $u_0, v_0 \in R_0$ units. 
  \end{lemma}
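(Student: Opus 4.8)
The claim is that a quaternion algebra $D_1$ over $F = F_0(\sqrt\lambda)$ of the given special form (built from the local parameters $\pi$, $\delta$ and units of $R$) which carries an $F/F_0$-involution actually "descends" to $R_0$, i.e.\ can be written in the same shape using only units of $R_0$. The key tool is that $D_1$ admits an $F/F_0$-involution if and only if $\cores_{F/F_0}(D_1) = 0$ in $\Br(F_0)$ (Albert's theorem, as used already in \S\ref{disc2}), so the whole proof reduces to computing a corestriction and extracting information about the slots of the symbol.

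First I would dispose of the unramified case $D_1 = (u,v)$ with $u,v \in R^*$. Since $F/F_0$ is unramified on $R_0$ and $\pi = \pi_0$, $\delta = \delta_0$, the algebra $(u,v)$ is unramified on $R$, hence $\cores_{F/F_0}(u,v)$ is unramified on $R_0$; as $\cores_{F/F_0}(D_1) = 0$ and $R_0$ is a complete regular local ring with $\Br(R_0) \cong \Br(k_0)$, we get that $\cores_{F/F_0}(D_1) = 0$ already reflects a statement over the residue field. But more simply: $D_1$ unramified means it comes from an Azumaya $R$-algebra $\Gamma_1$, and since $R/\mm = R_0/\mm_0$ and $R$ is complete, $\Gamma_1$ descends to an Azumaya $R_0$-algebra $\Gamma_{1,0}$ with $\Gamma_{1,0}\otimes_{R_0} R \cong \Gamma_1$ (this is exactly the argument used in \ref{ramified}); taking $D_{1,0} = \Gamma_{1,0}\otimes F_0$, which is a quaternion algebra over $F_0$ since it has degree $2$ after base change, one writes $D_{1,0} = (u_0, v_0)$ over the complete $R_0$ and base-changes. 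For the case $D_1 = (u,\pi)$ with $u$ a unit: the residue along $(\pi)$ is the class of $\bar u$ in $k(\pi)^*/k(\pi)^{*2}$, and the $F/F_0$-involution condition forces $\cores_{F/F_0}(u,\pi) = (N_{F/F_0}(u),\pi)$ (using $\pi = \pi_0 \in R_0$, as in the computation in \ref{components-invol}) to vanish; I would instead argue directly that up to multiplying $u$ by a square in $R^*$ one may take $u \in R_0^*$, using that $R^* = R_0^* \cdot (R^*)^2$ is false in general but $R^*/(R^*)^2$ is controlled — more carefully, one shows the \emph{class} of $u$ in $R^*/(R^*)^2$ lifts from $R_0^*/(R_0^*)^2$ because the nontrivial residue of $D_1$ is a class coming from $k_0(\pi_0)$ (the $F/F_0$-involution makes $\tau$ fix the ramification data), and then changes $u$ by a square. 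The case $D_1 = (u\pi, v\delta)$ is handled the same way: both slots' classes in $R^*/(R^*)^2$ must be $\tau$-invariant, hence descend to $R_0^*/(R_0^*)^2$, so after adjusting by squares we may take $u, v \in R_0^*$.

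The main obstacle, and the step deserving the most care, is the passage from "$\cores_{F/F_0}(D_1) = 0$" (equivalently, $D_1$ has an $F/F_0$-involution) to "$\tau$ acts trivially on the relevant classes in $R^*/(R^*)^2$ or $\kappa(\pi)^*/\kappa(\pi)^{*2}$." This is where one must combine the ramification analysis with the observation that $R^*/(R^*)^2 \hookleftarrow R_0^*/(R_0^*)^2$ is essentially an isomorphism onto the $\tau$-fixed part (here the hypothesis $\mathrm{char}(k_0)\neq 2$ and that $F/F_0$ is unramified on $R_0$ are used, together with $R/\mm = R_0/\mm_0$). I expect to invoke \ref{reddy} to reduce index/splitting questions to the residue field or to the completion at $(\pi)$, and Albert's criterion (\cite[Chapter 10, Theorem 21]{albert}) plus the corestriction vanishing (\cite[Lemma 6.4]{parimala2020localglobal}) exactly as in the proof of \ref{components-invol}. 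Once the slots are known to be $\tau$-invariant modulo squares, replacing each unit by a suitable square multiple in $R_0^*$ and using $\pi = \pi_0$, $\delta = \delta_0$ finishes the proof; these final substitutions are routine.
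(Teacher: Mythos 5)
Your overall plan — reduce to $\cores_{F/F_0}(D_1)=0$, read off residues at $\pi_0$ and $\delta_0$, and then adjust the unit slots by squares — is the same plan the paper follows, but two steps in the execution are not correct as written.

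First, the ``more simply'' treatment of the unramified case $D_1=(u,v)$ rests on $R/\mm = R_0/\mm_0$, which is \emph{false} in the setting of this lemma. Here $\lambda$ is a unit in $R_0$ and $F/F_0$ is unramified on $R_0$; if $\bar\lambda$ were a square in $k_0$ then Hensel would force $\lambda$ to be a square in $R_0$ and $F=F_0$, so in the nontrivial case $\bar\lambda$ is a nonsquare and the residue field of $R$ is the proper quadratic extension $k_0(\sqrt{\bar\lambda})$ of $k_0$. The identity $R/\mm = R_0/\mm_0$ that you borrow from the proof of Proposition \ref{ramified} holds there precisely because $\lambda=w\pi_0$ makes $F/F_0$ \emph{ramified}; you have transported it to the opposite case. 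Consequently $\Br(R_0)\cong\Br(k_0)\to\Br(k_0(\sqrt{\bar\lambda}))\cong\Br(R)$ is a restriction map, not an isomorphism, and an Azumaya $R$-algebra need not descend to $R_0$ for free — the corestriction hypothesis is actually needed. (The paper's own argument for this subcase goes directly through the involution/corestriction condition to get $D_1 = D_0\otimes F$ with $D_0$ unramified on $R_0$.)

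Second, for $D_1=(u,\pi)$ and $D_1=(u\pi,v\delta)$, you assert that the class of $u$ in $R^*/(R^*)^2$ ``lifts from $R_0^*/(R_0^*)^2$'' and leave it at that, but this is the whole content of the lemma and it does not follow just from the residue of $\cores(D_1)$ being trivial. What the corestriction computation gives is that $N_{F/F_0}(u)$ has trivial image in $\kappa(\pi_0)^*/\kappa(\pi_0)^{*2}$ (and hence, by Hensel applied to the $(\pi_0)$-adically complete $R_0$, that $N_{F/F_0}(u)=w_0^2$ for some $w_0\in R_0^*$). The step you are missing is Hilbert~90: from $N_{F/F_0}(w_0^{-1}u)=1$ one writes $w_0^{-1}u = \theta\tau(\theta)^{-1}$, and then $u\,\tau(\theta)^2 = w_0\,\theta\tau(\theta)\in R_0$, which is the concrete ``change $u$ by a square.'' Your phrasing suggests identifying the $\tau$-fixed part of $R^*/(R^*)^2$ with the image of $R_0^*/(R_0^*)^2$, which is exactly this Hilbert~90 argument in disguise, and it should be carried out rather than asserted; also note that $\cores=0$ gives norm information about $u$, not $\tau$-invariance of $u$ itself, and one must pass through the norm-one element before descending.
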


  \begin{proof} Suppose $D_1 = (u, \pi)$ for some $u \in R$ unit. 
  Since $\pi =\pi_0 \in R_0$, we have cores$_{F/F_0}(u, \pi) = (N_{F/F_0}(u),\pi_0)$.
  Since $D_1$ has a $F/F_0$-involution, we have  $(N_{F/F_0}(u),\pi_0) = 0 \in Br(F_0)$. 
  Since  the residue of $(N_{F/F_0}(u), \pi_0)$ at $\pi_0$ is the image of $N_{F/F_0}(u)$ in $\kappa(\pi_0)^*/\kappa(\pi_0)^{*2}$,  
  the image of $N_{F/F_0}(u)$ is a square in $\kappa(\pi_0)$. Since $R_0$ is a complete local  ring with $\pi_0$ a regular prime,
  $N_{F/F_0}(u)$ is a square in $R_0$. Let $w_0 \in R_0$ be such that 
  $N_{F/F_0}(u) = w_0^2$.  Then  $N_{F/F_0}(w_0^{-1}u) = 1$ and hence 
  $w_0^{-1}u = \theta \tau(\theta)^{-1}$ for some $\theta \in R$.
  We have   $ u  \tau(\theta)^2 = w_0  \theta \tau(\theta) = u_0 \in R_0$ and $(u, \pi) = (u_0, \pi)$. 
  
  Suppose $D_1 = (u\pi, v \delta)$. As above, by taking the residues at $\pi$ and $\delta$, we see that $N_{F/F_0}(u)$ and 
  $N_{F/F_0}(v)$ are squares. Hence as above, we can replace $u$ and $v$ by $u_0$ and $v_0$ for some $u_0, v_0 \in R_0$ units.   
  
  Suppose that $D_1 = (u,  v)$ for some $u, v \in R$. Since $D_1$  has an $F/F_0$-involution and 
  $D_1$ is unramified on $R$, $D_1 = D_0 \otimes F$ for some quaternion algebra $D_0$ over $F_0$ which is
  unramified on $R_0$ (\cite[Theorem 7.4]{auslander}).  In particular $D_0 = (u_0, v_0)$ for some units $u_0, v_0 \in R_0$
  \end{proof}

    \begin{cor}
  \label{disc-2dim-second2}
   Let $D \in {}_2Br(F)$ with an $F/F_0$-involution. Suppose that $D$ is unramified on $R$ except possibly at $(\pi)$ and $(\delta)$ and ind$(D) =2$.
  Then one of the following holds \\
  i)  $D$ is unramified on $R$ \\
  ii)  $D \simeq   (u_0,  u_1\pi_0)$ or $D  \simeq  (v_0, v_1\delta_0)$   \\
  iii) $D \simeq (  u_0,  u_1\pi_0\delta_0)$    \\
 iv)  $D \simeq   (u_0\pi_0, v_0\delta_0)$  \\
  for some units $w_i, u_i, v_i \in R_0$
  \end{cor}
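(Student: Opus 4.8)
The plan is to deduce this from Wu's classification by descending the coefficients to $R_0$ by means of the corestriction. Since $\operatorname{ind}(D)=2$, $D$ is a quaternion division algebra over $F$ which is unramified on $R$ outside $(\pi)$ and $(\delta)$, so Lemma~\ref{wu-disc} gives $D\isom(u,w)$, $(u,v\pi)$, $(u,v\delta)$, $(u,v\pi\delta)$ or $(u\pi,v\delta)$ for units $u,v,w\in R$. If $D\isom(u,w)$ we are in case~(i); if $D\isom(u\pi,v\delta)$, Lemma~\ref{decent} replaces $u,v$ by units of $R_0$ and puts us in case~(iv). Thus the real content is the case $D\isom(u,v\rho)$ with $\rho\in\{\pi_0,\delta_0,\pi_0\delta_0\}$.

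For such $D$ I would first descend the first slot. Since $D$ admits an $F/F_0$-involution iff $\cores_{F/F_0}[D]=0$, and $[D]=[(u,v)]+[(u,\rho)]$ with $\cores_{F/F_0}(u,v)$ unramified on $R_0$ and $\cores_{F/F_0}(u,\rho)=(N_{F/F_0}(u),\rho)$, comparing residues at a prime dividing $\rho$ shows, exactly as in the proof of Lemma~\ref{decent}, that $N_{F/F_0}(u)$ is a square in $R_0$; Hilbert~90 with the cocycle taken to be a unit then lets me replace $u$ by a unit $u_0\in R_0$, so $D\isom(u_0,v\rho)$. Now $(u_0,v)$ is unramified on $R$, and since $\cores_{F/F_0}(u_0,\rho)=(u_0^2,\rho)=0$ it also carries an $F/F_0$-involution, so Lemma~\ref{decent} gives $(u_0,v)\isom Q_0\otimes_{F_0}F$ with $Q_0$ a quaternion algebra over $F_0$ unramified on $R_0$.

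Next I would pass to $A_0:=Q_0\otimes_{F_0}(u_0,\rho)\in\Brt(F_0)$, a degree-$4$ algebra over $F_0$, unramified on $R_0$ outside the primes occurring in $\rho$, with $A_0\otimes_{F_0}F\sim D$ and hence $\operatorname{ind}(A_0\otimes F)=2$; since $[F:F_0]=2$ and $D$ is a division algebra this forces $\operatorname{ind}(A_0)\in\{2,4\}$. If $\operatorname{ind}(A_0)=2$, then $A_0$ is Brauer-equivalent to a quaternion division algebra over $F_0$ unramified on $R_0$ outside $\{(\pi_0),(\delta_0)\}$; applying Lemma~\ref{wu-disc} over $R_0$ and base-changing to $F$ (where the relevant quaternion algebra is still division of degree~$2$, hence isomorphic to $D$) identifies $D$ with one of the shapes (i)--(iv). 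If $\operatorname{ind}(A_0)=4$, then $A_0$ is a biquaternion division algebra over $F_0$ unramified on $R_0$ outside $\{(\pi_0),(\delta_0)\}$, so Proposition~\ref{disc-2dim} over $R_0$ presents it as a tensor product of two quaternion algebras with coefficients in $R_0$ of one of the five shapes listed there; I would then base-change to $F$ and read off $D$ as the quaternion division algebra in the resulting class.

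The step I expect to be the main obstacle is exactly this last case. After base change, $A_0\otimes F$ is a product of two quaternion algebras over $F$ whose sum has index~$2$; if one factor is split we are in case~(i), and otherwise the two factors share a slot over $F$, which need not visibly lie in $R_0$. To finish I would use residue matching: when $D$ ramifies at $(\pi)$ (resp.\ $(\delta)$) its residue lies in the image of $\kappa(\pi_0)^*$ in $\kappa(\pi)^*/\kappa(\pi)^{*2}$ — because $F/F_0$ is unramified over $R_0$ — and its equality there with the class of a unit $s_0\in R_0$ forces the symbol $(s_0,-)_F$ to coincide with the corresponding symbol of $D$; combining this with one more application of Lemma~\ref{decent} to the leftover unramified quaternion factor, the vanishing of $\cores_{F/F_0}$, and the description of the relative Brauer group of the biquadratic extension $F_0(\sqrt{\lambda},\sqrt{s_0})/F_0$ (to pull the remaining coefficient into $R_0^*$ modulo norms and squares), one brings $D$ into one of the forms (i)--(iv).
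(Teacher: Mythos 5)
Your handling of the easy cases is correct: $D\cong(u,w)$ is case (i), and for $D\cong(u\pi,v\delta)$ Lemma~\ref{decent} gives case (iv) immediately. For the remaining shapes $D\cong(u,v\rho)$ with $\rho\in\{\pi_0,\delta_0,\pi_0\delta_0\}$, the first two moves — replacing $u$ by $u_0\in R_0^*$ via the residue-of-corestriction argument and Hilbert~90, and descending the unramified factor $(u_0,v)$ to a quaternion $Q_0$ over $F_0$ unramified on $R_0$ — are also sound and match the mechanism of Lemma~\ref{decent}. The paper's own proof is essentially ``apply Lemma~\ref{wu-disc} over $R$, split off the ramified symbols with Proposition~\ref{components-invol}, descend each piece with Lemma~\ref{decent}''; your approach diverges by pushing the whole class down to $A_0=Q_0\otimes_{F_0}(u_0,\rho)$ and re-classifying over $R_0$. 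That detour, however, is where the proposal stops being a proof: as you observe, when $\operatorname{ind}(A_0)=4$ the base change leaves you with a tensor of two quaternions over $F$ whose common slot need not come from $R_0^*$, and the ``residue matching'' paragraph is a list of ingredients, not an argument. Nothing in it explains how to move the common slot into $R_0^*$.

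What actually closes the gap — and shows the $A_0$/Proposition~\ref{disc-2dim} excursion is unnecessary — can be run immediately after you reach $D\cong(u_0,v\rho)$ with $u_0\in R_0^*$, $v\in R^*$. The vanishing of $\cores_{F/F_0}$ and Arason's exactness for the quadratic extension $F/F_0$ give $(u_0,v)=\operatorname{res}_{F/F_0}(\beta)$ for some $\beta\in\Brt(F_0)$; since $(u_0,v)$ is split by $F(\sqrt{u_0})$, the class $\beta$ is split by the biquadratic $F_0(\sqrt{\lambda},\sqrt{u_0})$, and the structure of the relative Brauer group of a biquadratic extension writes $\beta=(u_0,e_1)+(\lambda u_0,e_2)+(\lambda,e_3)$ with $e_i\in F_0^*$. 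Restricting to $F$ kills $\lambda$ and yields $(u_0,v)=(u_0,e_1e_2)$ over $F$. Running this identity over the residue field $k_0$ (the quaternion $(u_0,v)$ is an Azumaya $R$-algebra and $\Br(R)\cong\Br(k)$) lets you lift $e_1e_2$ to a unit $u_1\in R_0^*$ with $(u_0,v)=(u_0,u_1)$, hence $D\cong(u_0,u_1\rho)$. This is the precise content your sentence about ``the relative Brauer group of the biquadratic extension'' gestures at; without carrying it out, the proposal does not establish the statement in the main case, and the index split $\operatorname{ind}(A_0)\in\{2,4\}$ is a red herring rather than a reduction.
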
 
  
  \begin{proof}  Follows from (\ref{wu-disc}),  (\ref{components-invol}) and (\ref{decent}).   
  \end{proof}
  
  \begin{cor}
  \label{disc-2dim-second}
   Let $D \in {}_2Br(F)$ with an $F/F_0$-involution. Suppose that $D$ is unramified on $R$ except possibly at $(\pi)$ and $(\delta)$ and ind$(D) =4$.
  Then one of the following holds \\
  i)  $D$ is unramified on $R$ \\
  ii)  $D \simeq (w_0, w_1)  \otimes (u_0,  u_1\pi_0)$ or $D  \simeq (w_0, w_1)\otimes (v_0, v_1\delta_0)$   \\
  iii) $D \simeq (w_0, w_1)  \otimes (u_0,  u_1\pi_0\delta_0)$    \\
  iv)  $D \simeq (u_0, u_1\pi_0) \otimes (v_0, v_1\delta_0)$ \\
  v)  $D \simeq (w_0, w_1) \otimes (u_0\pi_0, v_0\delta_0)$  \\
  for some units $w_i, u_i, v_i \in R_0$
  \end{cor}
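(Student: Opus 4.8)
The plan is to imitate the proof of the index-two corollary \ref{disc-2dim-second2}, using the index-four classification \ref{disc-2dim} in place of Wu's Lemma \ref{wu-disc}, and then pushing each tensor factor down to $F_0$ by means of Lemma \ref{decent}. First I would apply \ref{disc-2dim} to the division algebra $D$ (it lies in ${}_2\mathrm{Br}(F)$, is unramified on $R$ outside $(\pi)$ and $(\delta)$, and has index $4$): this exhibits $D$ over $F$ as one of the five listed tensor products of quaternion algebras with units in $R$. If $D$ is unramified on $R$ we are in case i) of the corollary and there is nothing further to do, so assume $D$ is ramified.

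\textbf{Peeling off and descending the ramified factors.} In each remaining case of \ref{disc-2dim} write $D\cong D_0\otimes Q^{\mathrm{ram}}$, where $D_0$ is the unramified-on-$R$ part and $Q^{\mathrm{ram}}$ is, respectively, $(u,\pi)$, or $(v,\delta)$, or $(w,\pi)\otimes(w,\delta)$ (for the factor $(w,t\pi\delta)$ one also absorbs $(w,t)$ into $D_0$), or $(w\pi,t\delta)$; after inserting a trivial quaternion where needed, Proposition \ref{components-invol} applies and shows that $D_0$ and each of these ramified quaternion factors carries an $F/F_0$-involution, i.e. has trivial corestriction to $F_0$. Then Lemma \ref{decent} replaces each ramified factor by one of $(u_0,\pi_0)$, $(v_0,\delta_0)$, $(u_0,u_1\pi_0\delta_0)$, $(u_0\pi_0,v_0\delta_0)$ with units in $R_0$; in particular the unit sitting in a ramified slot may be taken in $R_0$. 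The point to exploit is that once such a unit lies in $R_0$ its corestriction over $F/F_0$ has the shape $(u_0^{2},\pi_0)=0$ (using $\pi=\pi_0\in R_0$); hence, by additivity of the corestriction together with the previous sentence, every unramified quaternion that was grouped with a ramified one---and $D_0$ itself---also has trivial corestriction over $F_0$.

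\textbf{Descending the unramified part.} In forms iii) and v) of \ref{disc-2dim} the algebra $D_0$ is a single unramified quaternion $(u,v)$, which by Albert's theorem (\cite[Chapter 10, Theorem 21]{albert}), exactly as in the last case of Lemma \ref{decent}, descends to $(u_0,v_0)$ over $R_0$ (via $\mathrm{Br}(R)\cong\mathrm{Br}(k)$, $\mathrm{Br}(R_0)\cong\mathrm{Br}(k_0)$, \cite[Theorem 7.4]{auslander}). In form iv) the index computation already made in the proof of \ref{disc-2dim} (through \ref{index-cal} and \ref{deg4-uv-field}) forces $\mathrm{ind}(D_0)\le 2$, so $D_0$ is again a single quaternion and descends likewise. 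In forms ii) and iii) one has $D_0=(u,w)\otimes(v,t)$ with the distinguished slot $u$ forced into $R_0$ by the descent of $(u,\pi)$; regrouping $D=\bigl((u,w)\otimes(u,\pi)\bigr)\otimes(v,t)=(u,w\pi)\otimes(v,t)$ (resp. $(u,w\pi\delta)\otimes(v,t)$), the last paragraph gives that $(v,t)$ and $(u,w\pi)$ both have trivial corestriction over $F_0$; the unramified quaternion $(v,t)$ descends by Albert, and $(u,w\pi)=(u,w)\otimes(u,\pi)$ descends once $(u,w)$ does, which follows because $(u,w)$ is unramified with trivial corestriction---here one runs the commutant argument of Lemma \ref{overk} on the residue field $k$, keeping the $\overline u$-slot $k_0$-rational, and then identifies which of the three quadratic subextensions of $F_0(\sqrt\lambda,\sqrt u)/F_0$ splits the resulting $F_0$-quaternion. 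Reassembling the descended factors puts $D$ in one of the forms ii)--v) of the corollary; throughout one uses that $R$ is local with residue field $k$ a separable quadratic extension of $k_0$, so that \ref{overk}, \ref{decent}, and the descent of unramified Azumaya algebras are available.

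\textbf{Main obstacle.} The hardest point is precisely the descent of the unramified biquaternion ``core'' $D_0$ in forms ii) and iii): a biquaternion over $F$ admitting an $F/F_0$-involution need not be defined over $F_0$ in general, and one must really use the extra rigidity---$D_0$ is unramified on $R$ and, after the descent of the ramified slot, contains $\sqrt u$ with $u\in R_0$---to force the decomposition $D_0\cong(u,w_0)\otimes(v_0,t_0)\otimes F$ with $w_0,v_0,t_0\in R_0^{*}$. Making this reduction precise (in particular ensuring the descended $F_0$-quaternion carries $u$, rather than some $F$-equivalent but not $F_0$-defined element, in its first slot, and tracking the corestrictions of the individual factors so that each genuinely has an $F/F_0$-involution) is the bookkeeping that requires the most care, and is where I would expect to spend the bulk of the effort.
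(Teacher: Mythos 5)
Your overall strategy is the one the paper intends: the paper's own proof is the single line ``Follows from (\ref{disc-2dim}), (\ref{components-invol}) and (\ref{decent})'', and you faithfully reproduce that pipeline (classify $D$ over $F$ via \ref{disc-2dim}, transfer $F/F_0$-involutions to the tensor factors via \ref{components-invol}, descend the ramified factors and the unramified quaternion via \ref{decent}). You also correctly observe that in case iv) of \ref{disc-2dim} the unramified part has index at most $2$ (since $D_0\otimes F(\sqrt u)$ is split there), so Albert's descent of \ref{decent}(iii) applies directly, and that cases i) and v) are unproblematic.

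However, there is a genuine gap in your treatment of cases ii) and iii), and it is exactly the point you flag as the ``main obstacle''. After writing $D=D'\otimes(u,\pi)\otimes(1,\delta)$ with $D'=(u,w)\otimes(v,t)$ unramified, Proposition \ref{components-invol} gives that $D'$ \emph{as a whole} and $(u,\pi)$ have $F/F_0$-involutions (equivalently trivial corestriction). It does \emph{not} give, and your ``additivity'' sentence does not justify, that each of the individual quaternion factors $(u,w)$ and $(v,t)$ inside $D'$ has trivial corestriction. All one gets is $\cores_{F/F_0}(u,w)+\cores_{F/F_0}(v,t)=0$; concluding that both summands vanish is an additional, unproved assertion, and it is precisely what you would need in order to invoke Albert's theorem separately on $(v,t)$ and on $(u,w)$. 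Without it, the regrouping $D\simeq(u,w\pi)\otimes(v,t)$ does not hand you two quaternions each with trivial corestriction, so the descent of the biquaternion core is not established. In case ii) the argument in the proof of \ref{disc-2dim} only gives $\mathrm{ind}(D'\otimes F(\sqrt u))=2$, so $D'$ can genuinely be an index-$4$ biquaternion and the issue is not vacuous. (To be fair, the paper's one-line proof does not address this point either: Lemma \ref{decent} as stated only descends a \emph{single} unramified quaternion, not a biquaternion, so the written proof of \ref{disc-2dim-second} is just as terse on this step. But naming the gap is not the same as closing it, and your attempted patch via ``each factor has trivial corestriction'' does not follow from the cited lemmas.)

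A secondary, smaller point: the list you attribute to Lemma \ref{decent} (``$(u_0,\pi_0)$, $(v_0,\delta_0)$, $(u_0,u_1\pi_0\delta_0)$, $(u_0\pi_0,v_0\delta_0)$'') overstates what that lemma proves; \ref{decent} only treats $(u,\pi)$, $(u\pi,v\delta)$ and an unramified quaternion $(u,v)$. The factor $(w,t\pi\delta)$ has to be split further into $(w,t)\otimes(w,\pi)\otimes(w,\delta)$ and each $\pi$- and $\delta$-piece fed through \ref{components-invol} and \ref{decent}(i) separately; this is routine, but it is not what \ref{decent} literally says.
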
 
  
  \begin{proof}  Follows from (\ref{disc-2dim}),  (\ref{components-invol}) and (\ref{decent}).   
  \end{proof}
  
\begin{cor}
  \label{disc-2dim-second8}
   Let $D \in {}_2Br(F)$ with an $F/F_0$-involution. Suppose that $D$ is unramified on $R$ except possibly at $(\pi)$ and $(\delta)$ and
   every element of  ${}_2Br(k)$ has index at most 2. If ind$(D) = 8$, then 
  Then $D \simeq  (w_0, w_1)  \otimes (u_0, \pi_0) \otimes (v_0, \delta_0)$  
   for some units $w_0, w_1, u_0, v_0\in R_0$. 
  \end{cor}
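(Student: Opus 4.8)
The plan is to reduce everything to the already-established structure theorem \ref{deg8} over $R$ and then push each tensor factor down to $R_0$ using the two descent lemmas \ref{components-invol} and \ref{decent} of this section; the argument runs exactly parallel to the way \ref{disc-2dim-second} is deduced from \ref{disc-2dim}.

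First I would replace $D$ by its underlying division algebra, which changes neither the index, nor the ramification locus on $R$, nor the existence of an $F/F_0$-involution (the latter being equivalent to $\cores_{F/F_0}(D) = 0$ in $\Br(F_0)$, a condition on the Brauer class only). Since the residue field $k$ of $R$ has characteristic $\neq 2$ and, by hypothesis, every element of $\Brt(k)$ has index at most $2$, Proposition \ref{deg8} applies to $D$ over $F$ with the ring $R$. As ind$(D) = 8$, it yields an isomorphism $D \simeq D_0 \otimes (u,\pi) \otimes (v,\delta)$ with $D_0 = (w,t)$ a quaternion algebra for suitable units $u,v,w,t \in R$, where $D_0$ is unramified on $R$. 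Because $\lambda$ is a unit in $R_0$ we have $\pi = \pi_0$ and $\delta = \delta_0$, so $D \simeq (w,t) \otimes (u,\pi_0) \otimes (v,\delta_0)$.

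Now $D_0 = (w,t)$ lies in $\Brt(F)$ and, being a symbol in units of $R$, is unramified on $R$; hence $D_0 \simeq \Gamma_0 \otimes F$ for an Azumaya $R$-algebra $\Gamma_0$. Since $D = D_0 \otimes (u,\pi_0) \otimes (v,\delta_0)$ carries an $F/F_0$-involution, Proposition \ref{components-invol} shows that each of $D_0$, $(u,\pi_0)$ and $(v,\delta_0)$ carries an $F/F_0$-involution. I then apply Lemma \ref{decent} to these three factors in turn: $(u,\pi_0) = (u_0,\pi_0)$ and $(v,\delta_0) = (v_0,\delta_0)$ for units $u_0, v_0 \in R_0$, and $D_0 = (w,t)$ — a quaternion algebra unramified on $R$ with an $F/F_0$-involution — satisfies $D_0 = (w_0,w_1)$ for units $w_0,w_1 \in R_0$ by the $(u,v)$-case of \ref{decent}. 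Substituting back gives an equality $D = (w_0,w_1) \otimes (u_0,\pi_0) \otimes (v_0,\delta_0)$ in $\Brt(F)$; since the right-hand side has degree $8 = $ ind$(D) = $ deg$(D)$, this is an isomorphism of $F$-algebras, as claimed.

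I do not expect a genuine obstacle here: the substantive content — classifying $D$ over $R$, and descending each symbol factor via a corestriction computation — is all contained in \ref{deg8}, \ref{components-invol} and \ref{decent}. The only points requiring attention are checking that the quaternion factor $D_0$ produced by \ref{deg8} meets the Azumaya/period-$2$ hypotheses of \ref{components-invol} (immediate, since it is a symbol in units of $R$), and the concluding degree count that upgrades the Brauer-class identity to an algebra isomorphism.
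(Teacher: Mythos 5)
Your proof is correct and follows essentially the same route as the paper's: apply Proposition \ref{deg8} to produce the decomposition $D \simeq (w,t)\otimes(u,\pi)\otimes(v,\delta)$ with unit entries in $R$, invoke Proposition \ref{components-invol} to endow each tensor factor with an $F/F_0$-involution, and then descend each factor to $R_0$ via Lemma \ref{decent}. The paper phrases the final descent step as ``as in the proof of (\ref{disc-2dim-second})'' rather than spelling out the three applications of \ref{decent} and the concluding degree count, but the content is identical.
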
 
  
  \begin{proof}  By the assumptions on $k$ and $D$, by (\ref{deg8}), 
  $D \simeq (w_0, w_1) \otimes (u_0, \pi) \otimes (v_0, \delta)$  
   for some units $w_0, w_1, u_0, v_0\in R$. Since $D$ has a $F/F_0$-involution, by  (\ref{components-invol}), 
   $(w_0, w_1)$, $(u_0, \pi)$ and $(v_0, \delta)$ have $F/F_0$-involutions. 
  As in the proof of (\ref{disc-2dim-second}),  we can assume $w_0, w_1, u_0, v_0  \in R_0$.
  \end{proof}

\section{maximal orders}
\label{maximal-order}

Let $R$ be a complete  regular local  ring   with residue field $k$, $(\pi, \delta)$ maximal ideal  and field of fractions 
$F$.   Suppose that char$(k) \neq 2$. Let $D\in {}_2\text{Br}(F)$   be a division algebra which is unramified on $R$ except possibly at  $(\pi)$ and $(\delta)$. 
By ((\cite{wu}, Proposition 3.5)), we know that $D = D_0 \otimes D_1$  for some $D_0 \in {}_2\text{Br}(F)$ which is unramified on $R$ and 
$D_1$ is $(u,  v\pi)$ or $(u,  v\delta)$ or $(u, w\pi) \otimes (v, t\delta)$ or   $(u, v\pi\delta)$  or $(u\pi, v\delta)$ for some units $u, v \in R$.
If  $D\simeq D_0 \otimes D_1$, then   in this section we show that there is a maximal $R$-order with some properties. 

 For an integral domain $R$ and  $a, b \in R$ non zero elements, 
let $R(a, b)$ be the $R$-algebra generated by $i, j$ with $i^2 = a$, $j^2 = b$ and $ij = -ji$. 
Suppose that $2 \in R$ is a unit.
Then $R(a, b)$ is a $R$-order in the quaternion algebra $(a, b)$ over $F$. 
Further note that if $a, b \in R$ are units, then $R(a, b)$ is an Azumaya $R$-algebra.

\begin{prop}
\label{maximal-order-cdvr}
Let $R$ be a complete discrete valuation ring   with residue field $k$ and field of fractions 
$F$.   Suppose that char$(k) \neq 2$.  Let $\Gamma_0$ be an Azumaya algebra over $R$ and 
$D_0 = \Gamma_0 \otimes_R K$.  Let $u \in R$ be a unit and $\pi \in R$ a parameter. 
If $D = D_0 \otimes (u, \pi)$ is a division algebra, then $\Gamma = \Gamma_0 \otimes_R R(u, \pi)$  is 
the  maximal $R$-order of $D$. 
\end{prop}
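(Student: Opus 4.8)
The plan is to reduce everything to two facts: over a complete discrete valuation ring a central \emph{division} algebra admits a unique maximal order, and tensoring a maximal order with an Azumaya algebra preserves maximality. (Here $D_0=\Gamma_0\tensor_R F$, the symbol $K$ in the statement being $F$.)

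First I would deal with the quaternion factor. Since $D=D_0\tensor_F(u,\pi)$ is a division algebra, $(u,\pi)$ cannot be split, for otherwise $D\isom M_2(D_0)$, which has zero divisors. As $u$ and $2$ are units of $R$, Hensel's lemma shows $u$ is not a square in $R$, so $(u,\pi)$ is a (ramified) division quaternion algebra over $F$. Computing the reduced trace form of $R(u,\pi)$ on the basis $1,i,j,ij$ (with $i^2=u$, $j^2=\pi$, $ij=-ji$) yields the diagonal Gram matrix $\mathrm{diag}(2,2u,2\pi,-2u\pi)$, hence $\mathrm{disc}\bigl(R(u,\pi)\bigr)=(\pi^2)$ because $2$ and $u$ are units. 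This equals the discriminant of a maximal $R$-order in a division quaternion algebra over a complete discrete valuation ring (whose reduced discriminant is the maximal ideal); since $R(u,\pi)$ sits inside that unique maximal order with the same discriminant ideal, it \emph{is} the maximal $R$-order of $(u,\pi)$. (Alternatively one identifies $R(u,\pi)=\mathcal O_L\oplus\mathcal O_L j$ with the standard maximal order of the cyclic algebra $(L/F,\tau,\pi)$, $L=F(\sqrt u)$ unramified.)

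The heart of the matter is the general claim: if $\Lambda$ is an Azumaya $R$-algebra and $\Gamma'$ is a maximal $R$-order in a central simple $F$-algebra $B$, then $\Lambda\tensor_R\Gamma'$ is a maximal $R$-order in $(\Lambda\tensor_R F)\tensor_F B$. Being Azumaya, $\Lambda$ is free over $R$, say of rank $N=(\deg\Lambda)^2$, and $\Lambda\tensor_R\Lambda^{\mathrm{op}}\isom\End_R(\Lambda)\isom M_N(R)$. Given an $R$-order $\Theta\supseteq\Lambda\tensor_R\Gamma'$, applying $\Lambda^{\mathrm{op}}\tensor_R(-)$ gives $M_N(\Gamma')=(\Lambda^{\mathrm{op}}\tensor_R\Lambda)\tensor_R\Gamma'\subseteq\Lambda^{\mathrm{op}}\tensor_R\Theta$ inside $M_N(B)$; the left side is a maximal $R$-order (maximality being stable under $M_N(-)$) and the right side is an $R$-order, so the two coincide. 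Applying $\Lambda\tensor_R(-)$ and using $\Lambda\tensor_R\Lambda^{\mathrm{op}}\isom M_N(R)$ again gives $M_N(\Lambda\tensor_R\Gamma')=M_N(\Theta)$ as orders, and passing to the upper-left corner yields $\Lambda\tensor_R\Gamma'=\Theta$.

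Applying this with $\Lambda=\Gamma_0$ and $\Gamma'=R(u,\pi)$ shows $\Gamma=\Gamma_0\tensor_R R(u,\pi)$ is a maximal $R$-order in $D_0\tensor_F(u,\pi)=D$; since $D$ is a division algebra its maximal $R$-order is unique, so $\Gamma$ is the maximal $R$-order of $D$. The only substantive step is the general claim about tensoring with an Azumaya algebra; everything else is bookkeeping, and that claim could alternatively be quoted directly from the theory of maximal orders.
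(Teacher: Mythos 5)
Your proof is correct, but it takes a genuinely different route from the paper's. The paper works directly with the valuation $\nu_D(z)=\nu(\mathrm{Nrd}_D(z))$ and the characterization of the unique maximal order as $\{z:\nu_D(z)\geq 0\}$. It decomposes $D=D_1\oplus D_1 j$ with $D_1=D_0\otimes F(\sqrt{u})$ and $\Gamma_1=\Gamma_0\otimes R[\sqrt{u}]$, then argues that for $z=z_1+z_2 j\in D$ the valuations $\nu_D(z_1)$ and $\nu_D(z_2 j)$ are always distinct (because elements of $D_1$ have $\nu_D$-value a multiple of $2d_0$ while $\nu_D(j)=d_0$), so integrality of $z$ forces integrality of $z_1$ and $z_2$ separately. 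This is a concrete, hands-on ultrametric computation tailored to the decomposition $D_0\otimes(u,\pi)$. You instead modularize: first confirm $R(u,\pi)$ is the maximal order of the quaternion factor (discriminant count or identification with $\mathcal O_L\oplus\mathcal O_L j$, both fine), and then invoke the general principle that tensoring a maximal order with an Azumaya algebra gives a maximal order, proved by the Morita descent $\Lambda\otimes_R\Lambda^{\mathrm{op}}\cong M_N(R)$. The Morita step is sound, though the bookkeeping with the induced isomorphisms and the ``upper-left corner'' idempotent deserves a sentence of care; you are essentially rederiving a known fact from the theory of separable orders, which you acknowledge. The trade-off is that the paper's argument is entirely self-contained and produces the valuation structure on $\Gamma$ explicitly (which it uses elsewhere), whereas yours is shorter conceptually and isolates a reusable lemma but imports more from the general theory of orders and Azumaya algebras. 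Both are complete proofs of the proposition.
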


\begin{proof}  Suppose that $D$ is division.  Let $d = $ deg$(D)$ and $d_0 = $ deg$(D_0)$. Then $d=  2 d_0$. 

There is a discrete valuation on $D$
given by $\nu_D(z) =  \nu( Nrd_D(z))$ (\cite[139]{reiner}).  Furthermore $\Gamma' = \{ z \in D^* \mid \nu_D(z) \geq 0 \} \cup \{ 0 \} = \{z  \in D \mid 
z  ~\text{is ~integral~ over} ~ R \} $  is the unique maximal $R$-order of $D$ (\cite[Theorem 12.8]{reiner}). 

Since $\Gamma_0$ and $R(u, \pi)$ are finitely generated $R$-modules,     $\Gamma$ is a finitely generated $R$-module and hence every element of
$\Gamma$ is  integral over $R$. Hence $\Gamma \subseteq \Gamma'$.  We now show that $\Gamma' \subseteq \Gamma$.

Let $i, j \in (u, \pi)$ be the standard generators with $i^2 = u$, $j^2 = \pi$ and $ij = -ji$.
Let $D_ 1= D_0 \otimes F(i) \subset D$ and $\Gamma_1 = \Gamma_0 \otimes R[i]$. 
Then   $D = D_1 + D_1j$ and $\Gamma = \Gamma_1 \oplus \Gamma_1 j$. 
Since $D_0$ is unramified, $D_1$ is unramified. Since $D$ is a division algebra, $D_1$ is a division algebra.
Since $\Gamma_0$ is an Azumaya algebra over $R$,  $\Gamma_1$ is  an Azumaya algebra over $R[i]$. In particular 
$\Gamma_1$ is the maximal $R[i]$-order in $D_1$. 
Since  $F(i)/F$ is an unramified extension and  $D_0$ is unramified on $R$, $\pi$ is a parameter in $D_1$. 
Therefore, $\nu_D(z)$ is a multiple of  $2d_0$  for all $z \in D_1$ (\cite[139]{reiner}).
Since $j^2= \pi$ and $Nrd_D(j) = \pi^{d_0}$, we have $\nu_D(j) = d_0$. 

Let $z \in \Gamma'$. Then $z = z_1 + z_2j$ for some $z_1, z_2 \in D_1$. 
Suppose that $\nu_D(z_1) =  \nu_D(z_2j)$.  Then $\nu_D(z_1) - \nu_D(z_2) = \nu_D(j) = d_0$.
This is a contradiction, since  $\nu_D(z_1)$ and $\nu_D(z_2)$ are multiple of $2d_0$. 
Hence  $\nu_D(z_1) \neq \nu_D(z_2j)$. Then $\nu_D(z) = \text{min}\{\nu_{D}(z_1), \nu_{D}(z_2j)\} \geq 0$. 
In particular $\nu_D(z_1) \geq 0$ and hence $z_1 \in \Gamma_1$. 
Since $\nu_D(z_2j) = \nu_D(z_2) + \nu_D(j) = \nu_D(z_2) +d_0 $, 
we have $\nu_D(z_2) \geq -d_0$. Since $\nu_D(z_2) $ is a multiple of $2d_0$, it follows that 
$\nu_D(z_2) \geq 0$ and hence $z_2 \in \Gamma_1$. Thus $z \in \Gamma$.
\end{proof}

\begin{prop}
\label{maximal-order-dvr}
Let $R$ be a   discrete valuation ring   with residue field $k$,  field of fractions 
$F$ and   $\hat{F}$ the completion of $F$.
Suppose that char$(k) \neq 2$.  Let $\Gamma_0$ be an Azumaya algebra over $R$ and 
$D_0 = \Gamma_0 \otimes_R F$.  Let $u \in R$ be a unit and $\pi \in R$ a parameter. 
If $ (D_0 \otimes_F(u, \pi) ) \otimes_F\hat{F}$ is a divison algebra, then $  \Gamma_0 \otimes_R R(u, \pi)$ is 
the maximal $R$-order of $D_0 \otimes_F(u, \pi)$. 
\end{prop}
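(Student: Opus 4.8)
The plan is to reduce to the complete case treated in Proposition~\ref{maximal-order-cdvr} by passing to the completion $\hat R$ of $R$. Write $D := D_0\otimes_F(u,\pi)$ and $\Gamma := \Gamma_0\otimes_R R(u,\pi)$. The first step is formal bookkeeping: $\hat R$ is a complete discrete valuation ring with residue field $k$ and field of fractions $\hat F$; the images of $u$ and $\pi$ in $\hat R$ remain, respectively, a unit and a parameter; $\Gamma_0\otimes_R\hat R$ is an Azumaya $\hat R$-algebra with $(\Gamma_0\otimes_R\hat R)\otimes_{\hat R}\hat F \cong D_0\otimes_F\hat F$; and from the explicit presentation of $R(u,\pi)$ by generators and relations one gets $R(u,\pi)\otimes_R\hat R\cong\hat R(u,\pi)$ and $(u,\pi)_F\otimes_F\hat F\cong(u,\pi)_{\hat F}$. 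Combining these identities, $\Gamma\otimes_R\hat R\cong(\Gamma_0\otimes_R\hat R)\otimes_{\hat R}\hat R(u,\pi)$ and $D\otimes_F\hat F\cong(D_0\otimes_F\hat F)\otimes_{\hat F}(u,\pi)_{\hat F}$.

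By hypothesis $D\otimes_F\hat F$ is a division algebra, so Proposition~\ref{maximal-order-cdvr}, applied over $\hat R$ with $\Gamma_0\otimes_R\hat R$ in place of $\Gamma_0$, shows that $\Gamma\otimes_R\hat R$ is the maximal $\hat R$-order of $D\otimes_F\hat F$. Next I would invoke the standard descent principle for maximal orders: an $R$-order $\Lambda$ in a central simple $F$-algebra $B$ is a maximal $R$-order if and only if $\Lambda\otimes_R\hat R$ is a maximal $\hat R$-order in $B\otimes_F\hat F$ (cf.\ \cite[\S 11]{reiner}). Applying this with $\Lambda=\Gamma$ and $B=D$ yields that $\Gamma$ is a maximal $R$-order in $D$.

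It then remains to justify the definite article, i.e.\ that $\Gamma$ is the \emph{unique} maximal $R$-order of $D$. Since $D$ is a finite-dimensional $F$-algebra which embeds (via $a\mapsto a\otimes 1$) into the division algebra $D\otimes_F\hat F$, it is a domain, hence itself a division algebra. Let $\Gamma'$ be any maximal $R$-order of $D$. By the descent principle, $\Gamma'\otimes_R\hat R$ is a maximal $\hat R$-order of the complete division algebra $D\otimes_F\hat F$, so $\Gamma'\otimes_R\hat R=\Gamma\otimes_R\hat R$ by the uniqueness of the maximal order, \cite[Theorem 12.8]{reiner}. On the other hand, for any $R$-order $\Lambda\subseteq D$ the subset $(\Lambda\otimes_R\hat R)\cap D$ of $D\otimes_F\hat F$ is again an $R$-order containing $\Lambda$ (it is a subring, and a routine lattice computation using $\hat R\cap F=R$ shows it is a full $R$-lattice), so it equals $\Lambda$ whenever $\Lambda$ is maximal. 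Applying this to $\Lambda=\Gamma$ and to $\Lambda=\Gamma'$ gives $\Gamma=(\Gamma\otimes_R\hat R)\cap D=(\Gamma'\otimes_R\hat R)\cap D=\Gamma'$.

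The one genuinely non-formal point is this uniqueness step: over a discrete valuation ring that is not complete a central division algebra need not possess a unique maximal order, and the hypothesis that $D\otimes_F\hat F$ is a division algebra is precisely what is needed to bring in the unique-maximal-order theorem \cite[Theorem 12.8]{reiner} after completing. Everything else is the routine base-change calculus of the first two paragraphs together with the invariance of maximality under completion, so I expect no further difficulty.
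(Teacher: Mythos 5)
Your proof follows essentially the same route as the paper's: complete to $\hat R$, apply Proposition~\ref{maximal-order-cdvr} to conclude that $\Gamma\otimes_R\hat R$ is the maximal $\hat R$-order, and descend via Reiner's Theorem~11.5 (which is precisely the "descent principle" you cite from \cite[\S 11]{reiner}). You additionally spell out, via the lattice identity $(\Lambda\otimes_R\hat R)\cap D=\Lambda$, why the maximal order is unique — a point the paper leaves implicit when it writes "the maximal $R$-order" — and this extra step is correct and a worthwhile clarification.
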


\begin{proof} 
Let $\hat{R}$ be the completion of $R$. 
Let $\hat{\Gamma}_0 = \Gamma_0 \otimes \hat{R}$.  Then $\hat{\Gamma}_0$ is an Azumaya algebra over $\hat{R}$.
 Since $ (D_0 \otimes_F(u, \pi) ) \otimes_F\hat{F}$ is a divison algebra, 
  by  (\ref{maximal-order-cdvr}), $\hat{\Gamma}_0 \otimes \hat{R}(u, \pi)$ is a maximal $\hat{R}$-order of $ (D_0 \otimes_F(u, \pi) ) \otimes_F\hat{F}$. 
  Thus,  by  (\cite{reiner}, Theorem 11.5), $  \Gamma_0 \otimes_R R(u, \pi)$ is 
the maximal $R$-order of $D_0 \otimes_F(u, \pi)$.  
\end{proof}

\begin{cor}
\label{maximal-order-2dim}
Let $R$ be a two dimensional  complete regular local ring with residue field $k$, field of fractions 
$F$ and maximal ideal $\mathfrak{m} = (\pi,\delta)$.   For units $u, v \in R$, let $D_1$ and $\Gamma_1$ be one of the following: \\
i) $D_1 = (u,v)$, $\Gamma_1 = R(u,v)$ \\
ii) $D_1 = (u, \pi)$, $\Gamma_1 = R(u, \pi)$ \\
iii) $D_1 = (\pi, \delta)$, $\Gamma_1 = R(\pi, \delta)$\\
iv) $D_1 = (u, \pi\delta)$, $\Gamma_1 = R(u, \pi\delta)$\\
v) $D_1 = (u, \pi) \otimes (v, \delta)$, $\Gamma_1 = R(u,\pi) \otimes R(v, \delta)$.\\
Let $\Gamma_0$ be an Azumaya algebra over $R$ and $D_0 = \Gamma_0 \otimes_RF$. 
If $  D_0\otimes_F D_1$ is a division algebra, then $ \Gamma = \Gamma_0 \otimes_R \Gamma_1$ is a maximal $R$-order of $D_0\otimes_FD_1$.  
\end{cor}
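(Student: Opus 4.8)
The plan is to prove maximality of $\Gamma=\Gamma_0\otimes_R\Gamma_1$ via the local criterion for maximal orders over a regular local (in particular normal Noetherian) domain: an $R$-order which is reflexive as an $R$-module and is a maximal $R_{\mathfrak p}$-order for every height-one prime $\mathfrak p$ is itself maximal (see e.g.\ \cite{reiner}). First I would record that $\Gamma$ is genuinely an $R$-order in $D:=D_0\otimes_F D_1$ — it is module-finite over $R$ and $\Gamma\otimes_R F=D_0\otimes_F D_1=D$ — and that it is reflexive: $\Gamma_0$ is Azumaya, hence finitely generated projective, hence (as $R$ is local) $R$-free, and $\Gamma_1$ is $R$-free on the evident quaternionic basis in each of the five cases, so $\Gamma$ is $R$-free. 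Next, since $(\pi)$ and $(\delta)$ are height-one primes and any height-one prime containing $\pi$ (resp.\ $\delta$) equals $(\pi)$ (resp.\ $(\delta)$), for a height-one prime $\mathfrak p\notin\{(\pi),(\delta)\}$ both $\pi$ and $\delta$ are units of $R_{\mathfrak p}$; inspecting the five shapes of $\Gamma_1$ one sees that $(\Gamma_1)_{\mathfrak p}$ is then Azumaya over $R_{\mathfrak p}$, so $\Gamma_{\mathfrak p}=(\Gamma_0)_{\mathfrak p}\otimes_{R_{\mathfrak p}}(\Gamma_1)_{\mathfrak p}$ is Azumaya, in particular a maximal $R_{\mathfrak p}$-order. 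Hence it remains to treat $\mathfrak p=(\pi)$ and, symmetrically, $\mathfrak p=(\delta)$.

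At $\mathfrak p=(\pi)$ the ring $R_{(\pi)}$ is a discrete valuation ring with uniformizer $\pi$ and $\delta\in R_{(\pi)}^{\times}$. I would organize the computation of $\Gamma\otimes_R R_{(\pi)}$ by rewriting $\Gamma_1\otimes_R R_{(\pi)}$ as $\Lambda'\otimes_{R_{(\pi)}}R_{(\pi)}(u',\varpi)$ with $\Lambda'$ an Azumaya $R_{(\pi)}$-algebra, $u'\in R_{(\pi)}^{\times}$ and $\varpi$ a uniformizer of $R_{(\pi)}$: in case (i), $\Gamma_1$ is already Azumaya at $(\pi)$, so $\Gamma_{(\pi)}$ is Azumaya and we are done; in case (ii) take $u'=u,\ \varpi=\pi$; in case (iii), using $R(\pi,\delta)\cong R(\delta,\pi)$, take $u'=\delta,\ \varpi=\pi$; in case (iv) take $u'=u,\ \varpi=\pi\delta$ (a uniformizer since $\delta\in R_{(\pi)}^{\times}$); in case (v) absorb the Azumaya factor $R(v,\delta)$ into $\Lambda'$ and take $u'=u,\ \varpi=\pi$. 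Folding $(\Gamma_0)_{(\pi)}\otimes\Lambda'$ into a single Azumaya $R_{(\pi)}$-algebra $\Gamma_0'$ with generic fibre $D_0'$, we get $\Gamma\otimes_R R_{(\pi)}=\Gamma_0'\otimes_{R_{(\pi)}}R_{(\pi)}(u',\varpi)$ and $D=D_0'\otimes_F(u',\varpi)$, which is exactly the setting of Proposition~\ref{maximal-order-dvr}. Its hypothesis requires $D\otimes_F F_{(\pi)}$ to be a division algebra, and here Proposition~\ref{reddy} does the work: $D\in{}_2\mathrm{Br}(F)$, $\operatorname{char}(k)\neq2$ (so $-1$ is a primitive second root of unity in $F$ and $2$ is coprime to $\operatorname{char}(k)$), and $D=D_0\otimes_F D_1$ is unramified on $R$ away from $(\pi)$ and $(\delta)$, whence $\operatorname{ind}(D\otimes_F F_{(\pi)})=\operatorname{ind}(D)=\deg(D)$; since $D$ is division this says $D\otimes_F F_{(\pi)}$ is division. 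So Proposition~\ref{maximal-order-dvr} yields that $\Gamma\otimes_R R_{(\pi)}$ is the maximal $R_{(\pi)}$-order of $D$. The argument at $(\delta)$ is identical with $\pi$ and $\delta$ interchanged (absorbing $R(u,\pi)$ in case (v), using $\pi\delta$ as a uniformizer of $R_{(\delta)}$ in case (iv), etc.), with Proposition~\ref{reddy} applied at $(\delta)$.

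Finally I would assemble the local statements. In case (i), $\Gamma$ is a tensor product of Azumaya $R$-algebras, hence Azumaya over $R$, hence a maximal $R$-order (this also follows from the local analysis). In the other cases, pick a maximal $R$-order $\Lambda$ of $D$ containing $\Gamma$; at each height-one prime $\mathfrak p$ the localization $\Gamma_{\mathfrak p}$ is a maximal $R_{\mathfrak p}$-order contained in the $R_{\mathfrak p}$-order $\Lambda_{\mathfrak p}$, so $\Gamma_{\mathfrak p}=\Lambda_{\mathfrak p}$; since $\Gamma$ is reflexive and $\Lambda$, being maximal, is reflexive, and reflexive modules over the normal domain $R$ are the intersection of their localizations at height-one primes, we conclude $\Gamma=\bigcap_{\operatorname{ht}\mathfrak p=1}\Gamma_{\mathfrak p}=\bigcap_{\operatorname{ht}\mathfrak p=1}\Lambda_{\mathfrak p}=\Lambda$, so $\Gamma$ is a maximal $R$-order of $D$.

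The step I expect to be the main obstacle is the verification, in the second paragraph, that $D$ stays a division algebra after completing at $(\pi)$ and at $(\delta)$ — equivalently, that the index of $D$ is unchanged by these completions. This is precisely where the hypothesis $\operatorname{char}(k)\neq2$ is used, through Proposition~\ref{reddy} (it supplies both the relevant root of unity and the fact that $D$ is unramified outside $(\pi),(\delta)$, so that the proposition applies). Everything else is bookkeeping with the five shapes of $\Gamma_1$ together with standard properties of maximal orders over regular local rings.
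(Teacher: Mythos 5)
Your proof is correct and follows essentially the same route as the paper: Reiner's local criterion for maximality over a normal Noetherian domain, freeness (hence reflexivity) of $\Gamma$, Azumaya-ness of $\Gamma_\mathfrak{p}$ away from $(\pi),(\delta)$, and reduction to Proposition~\ref{maximal-order-dvr} at $(\pi)$ and $(\delta)$ by rewriting $\Gamma_1\otimes R_{(\pi)}$ as an Azumaya piece tensored with a single $R_{(\pi)}(u',\varpi)$. One small but genuine point in your favour: you explicitly verify, via Proposition~\ref{reddy}, that $D\otimes_F F_\pi$ and $D\otimes_F F_\delta$ remain division algebras, which is the standing hypothesis of Proposition~\ref{maximal-order-dvr}; the paper's proof invokes that proposition without spelling out this check. (Note that your appeal to ``$D\in{}_2\mathrm{Br}(F)$'' is not in the stated hypotheses of the corollary but is the standing assumption of the section, so it is legitimate.) Your final paragraph, re-deriving maximality from local maximality and reflexivity via intersection over height-one primes, is redundant once you have cited Reiner's criterion at the outset — that criterion already gives the global conclusion from the local data.
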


\begin{proof}
An order of a Noetherian integrally
closed domain is maximal if and only if it is reflexive and its localization at all height
one prime ideals are maximal orders (\cite{reiner}, Theorem 11.4). Since $\Gamma$ is a finitely generated free module, it is reflexive. Furthermore, $R$ is a regular local ring, hence it is Noetherian and integrally closed. We only need to show that $\Gamma_P$ is a maximal $R_P$-order for all height one prime ideals $P$.

Suppose that $D =D_0\otimes_FD_1 $ is a division algebra. 
Let $P$ be a height one prime ideal of $R$.  Suppose $P \neq \langle \pi \rangle, \langle \delta \rangle$.  Since $u, v \in R$ are units,
$u, v, \pi, \delta$ are units in $R_P$ and hence $\Gamma_1 \otimes R_P$ is an Azumaya $R_P$-algebra. In particular   $\Gamma \otimes R_P$ is an  Azumaya $R_P$-algebra. 
Hence $\Gamma_P$ is a   maximal $R_P$-order of $D$.  Suppose that $P  =  \langle \pi \rangle, \langle \delta \rangle$. 
 
\begin{enumerate}
    \item[i)]  Since   $u, v \in R$ are units,   ${(\Gamma_1)}_P$ is an Azumaya algebra over $R_P$. 
    Hence  $\Gamma_P$  is a maximal $R_P$-order on $D$.
    \item[ii)]  If $P \neq \langle \pi \rangle)$, then $\Gamma_1$ is an Azumaya $R_P$-algebra and hence 
    $\Gamma_P$ is a maximal $R_P$-order on $D$. 
    If $P = \langle \pi \rangle$, then $\Gamma_P$ is a maximal $R_P$-order on $D$ by  (\ref{maximal-order-dvr}).
    
    \item[iii), iv)]   If $P = \langle \pi \rangle$ or $P = \langle \delta \rangle$, then $\Gamma_P$ is a maximal $R_P$-order on $D$ by  (\ref{maximal-order-dvr}).
        
    \item[v)]   Suppose  $P = \langle \pi \rangle$. Let $\Gamma_0' = \Gamma_0 \otimes R_P(v ,\delta)$.
    Since $v, \delta$ are units in $R_P$, $R_P(v, \delta)$ is an Azumaya $R_P$-algebra.
    Since $D = (D_0 \otimes (v, \delta)) \otimes (u, \pi)$ and $\Gamma = (\Gamma_0 \otimes R_P(v, \delta)) \otimes R_P(u, \pi)$, 
    by (\ref{maximal-order-dvr}), $\Gamma_P$  is a maximal $R_P$-order on $D$.  If $P = \langle \delta \rangle$, a similar argument holds. 
   
\end{enumerate}
\end{proof}

\section{A local global principle for hermitian forms over two dimensional local fields}
\label{lgp-local}

Let $R_0$ be a 2-dimensional complete regular local ring with maximal ideal $\mathfrak{m}_0 = (\pi_0,\delta_0)$ and residue field $k_0$.
Suppose that char$(k_0) \neq 2$. 
Let $F_0$ be the field of fractions of $R_0$ and let $F = F_0(\sqrt{\lambda})$ be an extension of degree at most  2, 
with $\lambda $ a unit in $R_0$  or 
a unit times $\pi_0$.  
Let    $D \in {}_2\text{Br}(F)$ be a division algebra  with  $F/F_0$-involution $\sigma$  and $h$ an hermitian form over $(D, \sigma)$. 
In this section, under some assumptions on $D$, $F_0$ and $h$, we  prove that if $h\otimes F_{0\pi}$ or $h \otimes F_{0\delta}$ is isotropic, then 
$h$ is isotropic. \\

Let $R$ be the integral closure of $R_0$ in $F$. 
By the assumption on $\lambda$, $R$ is a 2-dimensional regular local ring with   maximal ideal $(\pi,\delta)$ (\cite[3.1, 3.2]{parimala-uinvariant}), where;
 if  $\lambda $ is a unit in $R_0$, then $\pi = \pi_0$ and $\delta = \delta_0$ and  if $\lambda $ is a unit times $\pi_0$, then $\pi = \sqrt{\lambda}$ and $\delta = \delta_0$. \\
 
Let $G(D, \sigma, h) = SU(D, \sigma, h)$ if $F = F_0$ and $G(D, \sigma, h) = U(A, \sigma, h)$ if $[ F : F_0] = 2$. \\

We begin with the following, which is  proved by Wu (\cite[Corollary 3.12]{wu}) for $D$ a quaternion algebra. 
 
  \begin{prop}
 \label{local1}  Let $F_0$ and  $F$  be as above. Let $D\in {}_2\text{Br}(F)$ and $\sigma$ an $F/F_0$- involution.
 Suppose that $D$ is a division algebra which  unramified on $R$ except possibly at $(\pi)$ and $(\delta)$.  Let $d = $deg$(D)$, 
 $e_0$ the ramification index of $D$ at $\pi$ and $e_1$ the ramification index of $D$ at $\delta$. 
 Suppose that 
 there exists a maximal $R$-order $\Gamma$ of $D$ and $\pi_D, \delta_D \in \Gamma$ such that 
 $\sigma(\pi_D)= \pm \pi_D$, $\sigma(\delta_D) = \pm \pi_D$ and $\pi_D\delta_D = \pm \delta_D \pi_D$ and
 Nrd$(\pi_D) = \theta_0 \pi^{\frac{d}{e_0}}$ and  Nrd$(\delta_D) = \theta_1 \delta^{\frac{d}{e_1}}$ for some units $\theta_0, \theta_1 \in R$.
 Let $h = \langle a_1, \cdots, a_n\rangle$  be an hermitian form over $(D, \sigma)$. 
 Suppose that  for $1\leq i \leq n$, $a_i \in \Gamma$ and Nrd$(a_i)$ is a  product of a unit in $R$,  a power of $\pi$ and a power of 
 $\delta$.  If $h \otimes F_\pi$ or $h \otimes F_\delta$ is isotropic, then $h$ is isotropic.  
 \end{prop}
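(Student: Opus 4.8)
### Proof proposal for Proposition \ref{local1}

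The plan is to reduce the isotropy question to a computation with a diagonalized hermitian form, using the structure of the maximal $R$-order $\Gamma$ and the "norm principle'' that governs how reduced norms detect isotropy over complete discretely valued fields. First I would observe that the valuations $\nu_\pi$ and $\nu_\delta$ on $F$ extend (up to scaling) to the unique valuations $\nu_{D,\pi}$, $\nu_{D,\delta}$ on $D$ via the reduced norm, with value groups controlled by the ramification indices $e_0, e_1$; the elements $\pi_D, \delta_D$ are the prescribed uniformizers of $\Gamma$ along these two valuations, and the commutation relations $\sigma(\pi_D) = \pm\pi_D$, $\sigma(\delta_D) = \pm\delta_D$, $\pi_D\delta_D = \pm\delta_D\pi_D$ are exactly what is needed so that conjugation by $\pi_D$ and by $\delta_D$ preserves the involution $\sigma$ up to sign. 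The hypothesis that each $a_i \in \Gamma$ has $\mathrm{Nrd}(a_i)$ a unit times a monomial in $\pi$ and $\delta$ means that, after scaling each $a_i$ on the left and right by suitable powers of $\pi_D$ and $\delta_D$ (which only changes the form by an isometry in the $U$/$SU$ sense and by squares of reduced norms), I may assume every $a_i$ is a unit in $\Gamma$, i.e. $\mathrm{Nrd}(a_i) \in R^*$.

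Next I would pass to the residue. Since $\Gamma$ is the maximal $R$-order and $a_i \in \Gamma^*$, reduction modulo $(\pi)$ (respectively $(\delta)$) gives a hermitian form $\bar h_\pi$ over the residue algebra $\Gamma/\pi\Gamma$ with its induced involution $\bar\sigma$ over the residue field $\kappa(\pi)$ of $F_\pi$ — here I want to invoke the local structure of $\Gamma \otimes \hat{\mathcal O}_\pi$ together with the fact that an anisotropic hermitian form over a complete discretely valued division algebra with residue characteristic $\neq 2$ decomposes as an "unramified'' part and a "ramified'' part, each detected by a residual hermitian form (this is the division-algebra analogue of Springer's theorem; compare Larmour and Wu \cite{wu}). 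The hypothesis that $h \otimes F_\pi$ is isotropic forces the corresponding residual form to be isotropic over $\kappa(\pi)$ — and, crucially, because all the $a_i$ are units, the whole form $h\otimes F_\pi$ has only an unramified part, so isotropy of $h\otimes F_\pi$ is equivalent to isotropy of the single residual hermitian form $\bar h$ over the residue division algebra $\bar D = \Gamma\otimes k$ (a division algebra over $k$, or over the relevant residue field, once we quotient by the maximal ideal of $R$). The same applies verbatim with $\delta$ in place of $\pi$.

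The key point is then that the residue field of $R$ is the same whether we approach via $\pi$ or via $\delta$: both $\kappa(\pi)$ and $\kappa(\delta)$ are complete discretely valued fields with the same residue field $k$, and $\Gamma\otimes k$ is one fixed division algebra $\bar D$ over $k$ with one fixed involution $\bar\sigma$. So isotropy of $h\otimes F_\pi$ and isotropy of $h\otimes F_\delta$ both come down to isotropy of the \emph{same} hermitian form $\bar h = \langle \bar a_1,\dots,\bar a_n\rangle$ over $(\bar D, \bar\sigma)$ — here I would use that the $\bar a_i$ are the common images of the $a_i\in R^*\cdot(\text{units of }\Gamma)$ in the residue, independent of which codimension-one prime we complete at. Finally, once $\bar h$ is isotropic over the residue, Hensel's lemma (in the form of lifting isotropic vectors over complete local rings, applicable since $2$ is a unit and $\Gamma$ is the maximal order) lifts an isotropic vector of $\bar h$ to an isotropic vector of $h$ over $F$ itself. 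Thus $h$ is isotropic.

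I expect the main obstacle to be the bookkeeping in the reduction step: verifying that after rescaling the $a_i$ by monomials in $\pi_D,\delta_D$ one genuinely stays within the same isometry class of hermitian form (keeping track of the signs coming from $\sigma(\pi_D) = \pm\pi_D$ etc., which affect whether one is scaling by an element fixed or negated by $\sigma$), and confirming that the residual form obtained from the $\pi$-completion and the one from the $\delta$-completion are literally the \emph{same} form over $(\bar D,\bar\sigma)$ and not merely similar ones — this is what makes the two hypotheses ``$h\otimes F_\pi$ isotropic'' and ``$h\otimes F_\delta$ isotropic'' interchangeable and is the crux of the whole proposition. The analysis of the residue division algebra $\bar D$ and its involution should be straightforward given the structure results of \S\ref{disc} and \S\ref{disc2}, and the Hensel-type lifting is standard for orders over complete regular local rings with $2$ invertible.
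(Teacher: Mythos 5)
The paper's own proof of this proposition is a one-line citation to \cite[Corollary 3.3]{wu}, which is exactly the Springer-type statement over the fraction field of a two-dimensional complete regular local ring. So there is no detailed in-paper argument to compare line-by-line; your write-up is an attempt to reconstruct the substance of that citation, and it has a genuine gap at its very first reduction.

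You claim that by scaling each $a_i$ by monomials in $\pi_D,\delta_D$ you may assume every $a_i$ is a unit of $\Gamma$. This fails. Scaling $a_i \mapsto \sigma(b)\,a_i\,b$ by $b = \pi_D^{m}\delta_D^{n}$ multiplies $\mathrm{Nrd}(a_i)$ by $\mathrm{Nrd}(\sigma(b))\,\mathrm{Nrd}(b)$, which (for $\sigma$ of the first kind) is $\mathrm{Nrd}(b)^2 = \theta_0^{2m}\theta_1^{2n}\pi^{2md/e_0}\delta^{2nd/e_1}$. Thus the $\pi$-valuation of $\mathrm{Nrd}(a_i)$ is shifted only by multiples of $2d/e_0$, and the $\delta$-valuation only by multiples of $2d/e_1$; these residue classes are invariants and cannot be cleared. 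In particular, already for $D$ a ramified quaternion algebra ($d=2$, $e_0=2$), a coefficient with $\nu_\pi(\mathrm{Nrd}(a_i))$ odd can never be made a unit. This is not a cosmetic simplification: the whole content of the hermitian Springer theorem here is that after scaling one obtains up to four residual hermitian forms over the residue algebra, indexed by the residue classes of the $(\pi,\delta)$-valuations of $\mathrm{Nrd}(a_i)$, and the crux of the proposition is that the same four residual forms control isotropy both at $F_\pi$ and at $F_\delta$. Collapsing the decomposition to a single unramified piece erases the phenomenon the result is about.

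Two further points are under-specified. First, the residue of $h\otimes F_\pi$ lives over $\kappa(\pi)=\mathrm{Frac}(R/(\pi))$, a complete discretely valued field with residue field $k$, not over $k$ itself; to land over $k$ you must iterate the residue along $\delta$, and this iteration is exactly where the four-piece decomposition reappears. You flag this in passing but then argue as if the $\pi$-residue already sits over $k$. Second, the final ``Hensel lifting of an isotropic vector'' is a two-level lift (from $k$ up through $\kappa(\pi)$ or $\kappa(\delta)$, and then up to $F$) and needs to be carried out compatibly with the decomposition; as written it is a single unjustified step. The correct route, which is what Wu's Corollary 3.3 packages, is to keep the full four-way decomposition, show that each residual form is common to the two completions, and lift piecewise — your intuition that $\pi_D,\delta_D$ and the sign conditions on $\sigma$ are what make the two completions comparable is right, but the reduction step that is supposed to set this up does not go through.
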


  \begin{proof}   Follows from (\cite[Corollary 3.3]{wu}). 
  \end{proof}
 
 As a consequence we have the following  (cf. \cite[Corollary 3.12]{wu})

 \begin{prop}
 \label{local2}  Let $F_0$ and  $F$  be as above. Let $D\in {}_2\text{Br}(F)$ and $\sigma$ an $F/F_0$- involution.
 Suppose that $D$ is a division algebra which is unramified on $R$ except possibly at $(\pi)$ and $(\delta)$.  Let $d = $deg$(D)$, 
 $e_0$ the ramification index of $D$ at $\pi$ and $e_1$ the ramification index of $D$ at $\delta$. 
 Suppose that 
 there exists a maximal $R$-order $\Gamma$ of $D$ and $\pi_D, \delta_D \in \Gamma$ such that 
 $\sigma(\pi_D)= \pm \pi_D$, $\sigma(\delta_D) = \pm \pi_D$, $\pi_D\delta_D = \pm \delta_D \pi_D$,
 Nrd$(\pi_D) = \theta_0 \pi^{\frac{d}{e_0}}$ and  Nrd$(\delta_D) = \theta_1 \delta^{\frac{d}{e_1}}$ for some units $\theta_0, \theta_1 \in R$.
 Let $h = \langle a_1, \cdots, a_n\rangle$  be an hermitian form over $(D, \sigma)$. 
 Suppose that  for $1\leq i \leq n$, $a_i \in \Gamma$ and Nrd$(a_i)$ is a  product of a unit in $R$,  a power of $\pi$ and a power of 
 $\delta$. 
 Let $X$  be a projective homogeneous space under $G(D, \sigma, h)$. If $X(F_{0\pi}) \neq \emptyset$ or $X(F_{0\delta}) \neq \emptyset$,
 then $X(F_0) \neq \emptyset$. 
 \end{prop}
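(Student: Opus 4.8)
The plan is to deduce the statement for the variety $X$ from the isotropy statement for $h$ proved in \ref{local1}, via the classification of projective homogeneous varieties under $G(D,\sigma,h)$. Combining the Morita reduction recalled in \S\ref{prel} with Merkurjev's description of such varieties (\cite{merkurjevflagvar}, \cite{merkurjevflagvar2}, \cite{karpenko}), there is a tuple of integers $r_1 < \cdots < r_s$ and, only when $\sigma$ is of orthogonal type of the first kind (so that $G(D,\sigma,h)$ is of type $\mathsf{D}_n$), possibly one component $C$ of the even Clifford algebra of $h$, such that for every field extension $L/F_0$ one has $X(L)\neq\emptyset$ if and only if $h\otimes L$ admits a chain of totally isotropic subspaces $V_1\subset\cdots\subset V_s$ with $\dim_D V_i = r_i$, together with the splitting of $C\otimes L$ when that datum is present and $r_s$ is half the rank of $h$. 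Such a flag exists over $L$ exactly when $h\otimes L$ has Witt index at least $r_s$, so the hypothesis $X(F_{0\pi})\neq\emptyset$ (resp. $X(F_{0\delta})\neq\emptyset$) becomes: $h\otimes F_\pi$ (resp. $h\otimes F_\delta$) has Witt index at least $r_s$, and, in the borderline $\mathsf{D}_n$ case, $C\otimes F_{0\pi}$ (resp. $C\otimes F_{0\delta}$) is split.

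I would first dispose of the Clifford condition. The component $C$ is $2$-torsion in the Brauer group of $F_0$ or of $F$, and it is unramified on the corresponding ring away from $(\pi),(\delta)$, the only possible ramification coming from $D$ and from the discriminant of $h$. Hence \ref{reddy} gives $\mathrm{ind}(C) = \mathrm{ind}(C\otimes F_{0\pi})$, and symmetrically at $\delta$, so that splitting of $C$ over $F_{0\pi}$ or over $F_{0\delta}$ forces splitting over $F_0$. It then remains to exhibit, over $F_0$, a totally isotropic $D$-subspace of $h$ of dimension $r_s$. I would do this by induction on $r_s$, the case $r_s=0$ being vacuous. If $r_s\geq 1$ then $h$ is isotropic over $F_\pi$ or $F_\delta$, so by \ref{local1} the form $h$ is isotropic over $F$; write $h\cong\mathbb{H}\perp h'$. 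One checks that $h'$ again satisfies the hypotheses of \ref{local1} relative to the same maximal order $\Gamma$ — that it has a diagonalization $\langle a'_1,\dots,a'_{n-2}\rangle$ with each $a'_i\in\Gamma$ and $\mathrm{Nrd}(a'_i)$ a product of a unit of $R$ with powers of $\pi$ and $\delta$ — by taking the isotropic vector produced by \ref{local1} to be primitive in $\Gamma^{\oplus n}$ and following the induced hyperbolic sublattice and its orthogonal complement, as in Wu's treatment of the quaternion case (\cite[\S 3]{wu}). Granting this, Witt cancellation shows that $h'$ has Witt index at least $r_s-1$ over the relevant completion, the inductive hypothesis gives a totally isotropic $D$-subspace of $h'$ of dimension $r_s-1$ over $F_0$, and adjoining a hyperbolic plane of $h$ over $F_0$ produces one of dimension $r_s$ in $h$. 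Together with the Clifford step this gives $X(F_0)\neq\emptyset$.

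The step I expect to be the main obstacle is precisely this integral bookkeeping in the inductive step: showing that the orthogonal complement of a split hyperbolic plane can be taken to have all of its diagonal entries inside the fixed maximal order $\Gamma$ with reduced norms supported on $\{\pi,\delta\}$, so that \ref{local1} can be applied again. Everything else is formal once Merkurjev's classification and \ref{reddy} are in hand.
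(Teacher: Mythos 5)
Your approach reproduces the paper's: characterize $X(L)\neq\emptyset$ via Merkurjev's classification as a Witt-index bound on $h\otimes L$ (plus a Clifford condition in the $\mathsf{D}_n$ case), dispatch the Clifford piece with \ref{reddy}, and induct on Witt index using \ref{local1}. The paper's proof literally says ``apply \ref{local1} and the rest of the proof of Wu's Corollary 3.12,'' so your reconstruction of that argument is faithful to the route taken. Two remarks. The paper's first line records $\mathrm{ind}(D)=\mathrm{ind}(D\otimes F_\pi)$ via \ref{reddy}, whereas you invoke \ref{reddy} only for the Clifford component $C$; the equality for $D$ itself is also used, to ensure that the identification of $X(L)\neq\emptyset$ with the Witt-index/flag datum does not shift when $L=F_{0\pi}$ or $F_{0\delta}$ (a drop in $\mathrm{ind}(D)$ at the completion would require more care), so you should state it explicitly. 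And the step you rightly isolate as the crux --- that the orthogonal complement of a split hyperbolic plane can be rediagonalized inside $\Gamma$ with reduced norms supported on $(\pi)$ and $(\delta)$, so that \ref{local1} can be reapplied --- is precisely the content of Wu's Corollary 3.12 that the paper also imports by citation rather than reproving; your acknowledged gap and the paper's citation are the same point.
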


  \begin{proof}   
 Since the period of $D$ divides 2  by  our assumption on $D$, we have ind$(D) =  $ ind$(D\otimes F_\pi)$ (\ref{reddy}).
 Suppose that $X(F_{0\pi}) \neq \emptyset$ or $X(F_{0\delta}) \neq \emptyset$.  
 Using, (\ref{local1}),  the rest of the proof of   (\cite[Corollary 3.12]{wu}) can be applied here to  show that $X(F_0) \neq \emptyset$. 
  \end{proof}

  We fix the following. 
\begin{notation}
\label{maximal-order-inv}
Let $R_0$ be a 2-dimensional complete regular local ring with maximal ideal $\mathfrak{m}_0 = (\pi_0,\delta_0)$ and residue field $k_0$.
Suppose that char$(k_0) \neq 2$. 
Let $F_0$ be the field of fractions of $R_0$ and let $F = F_0(\sqrt{\lambda})$ be an extension of degree at most  2, 
with $\lambda $ a unit in $R_0$  or 
a unit times $\pi_0$. 
Let $R$ be the integral closure of $R_0$ in $F$. 
By the assumption on $\lambda$, $R$ is a 2-dimensional regular local ring with   maximal ideal $(\pi,\delta)$ (\cite[3.1, 3.2]{parimala-uinvariant}), where;
 if  $\lambda $ is a unit in $R_0$, then $\pi = \pi_0$ and $\delta = \delta_0$ and  if $\lambda $ is a unit times $\pi_0$, then $\pi = \sqrt{\lambda}$ and $\delta = \delta_0$. 
 Let   $ u_i, v_i \in R_0$ be units and $D_1$,    $\Gamma_1$   and $\sigma_1$  denote  one of the following: \\
 i)  $D_1 = F_0$,  $\Gamma_1 = R$, $\sigma_1 = id$ \\
  ii)  $D_1 =  (u_0,  u_1\pi_0)$,  $\Gamma_1 = R(u_0, u_1\pi_0)$,  $\sigma_1$ the canonical involution   \\
   iii)  $D_1 = (u_0\pi_0, v_0\delta_0)$, $\Gamma_1 = R(u_0\pi_0, v_0\delta_0)$,  $\sigma_1$ the canonical involution \\
  iv)  $D_1  =  (u_0, u_1\pi_0) \otimes (v_0, v_1\delta_0)$, $\Gamma_1 = R(u_0, u_1\pi_0) \otimes R(v_0, v_1\delta_0)$, 
   $\sigma_1$ the  tensor product of the canonical involutions  \\
  Let $\Gamma_0$ be an Azumaya algebra over $R$ with an $R/R_0$- involution $\tilde{\sigma}_1$. Let $D_0 \simeq \Gamma_0 \otimes F$ and 
$\sigma_0 = \tilde{\sigma_0} \otimes 1$. 
Let $D \simeq D_0 \otimes D_1$, $\Gamma = \Gamma_0 \otimes \Gamma_1$ and $\sigma = \sigma_0 \otimes \sigma_1$.
Then $\sigma$ is a $F/F_0$-involution on $D$ and $\sigma(\Gamma) = \Gamma$. 
Let $d_i$ denote the degree of $D_i$. 
The following table gives a choice of $\pi _{D}, \delta_{D} \in \Gamma$ and some of their  properties. 
\begin{table}[H]
\begin{tabular}{@{}|l|l|l|l|l|l|l|l|@{}}
\toprule
 D& $\pi_D$ & $\delta_D$ & $\text{Nrd}(\pi_D)$ & $\text{Nrd}(\delta_D)$ & $\sigma(\pi_D)$ & $\sigma(\delta_D)$ & $\sigma(\pi_D\delta_D)$ \\ \midrule
 
 $D_0 $ & $ \pi_0$ &$  \delta_0$  & $\pi_0^{d_0}$  & $\delta_0^{d_0}$  & $\pi_D$ & $\delta_D$ &$\pi_D\delta_D$  \\ \midrule
 
 $D_0 \otimes (u_0,  u_1\pi_0)$& $1 \otimes j$ &$1 \otimes \delta$  & $(u_1\pi_0)^{d_0}$  & $\delta_0^{2d_0}$  & $-\pi_D$ 
 & $\delta_D$ &$-\pi D\delta_D$  \\ \midrule

 $D_0 \otimes  (u_0\pi_0, v_0\delta_0) $& $1 \otimes i$   &  $1 \otimes j$  &$(u_0\pi_0)^{d_0}$  
 &$(v_0\delta_0)^{d_0}$  &$-\pi_D$  &$-\delta_D$  & $-\pi_D\delta_D$  \\ \midrule
 
 $D_0 \otimes (u_0, u_1\pi_0) \otimes (v_0, v_1\delta_0)$& $1 \otimes j_1 \otimes 1$ & $1 \otimes 1 \otimes j_2$   &
 $(u_1\pi_0)^{2d_0}$  &$(v_1\delta_0)^{2d_0}$  &$-\pi_D$  &$-\delta_D$  &$ \pi_D\delta_D$  \\ \bottomrule
 
\end{tabular}
\end{table} 
\end{notation}

\begin{cor}\label{local-results-1}
Let $F$, $D$, $\sigma$ and $\Gamma$  be as in (\ref{maximal-order-inv}). 
Suppose that $D$  is a division algebra. 
Let $h = \langle a_1, \cdots, a_n\rangle$ be a hermitian form over $(D,\sigma)$ with $a_i \in \Gamma$. 
Suppose Nrd$(a_i)$ is a unit times a power of $\pi$ and a power of $\delta$. 
 Let $X$ be a projective homogeneous space under $G(D,\sigma,h)$ over $F_0$. If $X(F_{0\pi_0}) \neq \emptyset$ or 
 $X(F_{0\delta}) \neq \emptyset$, then $X(F_0) \neq \emptyset$.
\end{cor}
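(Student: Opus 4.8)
The plan is to obtain the statement as a direct application of Proposition \ref{local2}: it suffices to check that the data $(D,\sigma,\Gamma,\pi_D,\delta_D)$ and the hermitian form $h$ fixed in Notation \ref{maximal-order-inv} satisfy the hypotheses of that proposition. To begin, $D$ is a division algebra by assumption, and it is unramified on $R$ outside $(\pi)$ and $(\delta)$: the factor $D_0=\Gamma_0\otimes F$ is unramified on all of $R$ since $\Gamma_0$ is Azumaya, while each of the four choices of $D_1$ is unramified at every height-one prime except $(\pi_0)$ and $(\delta_0)$, hence so is $D=D_0\otimes D_1$. If $F=F_0$ these primes are $(\pi)$ and $(\delta)$; if $F=F_0(\sqrt\lambda)$ with $\lambda=w\pi_0$, then $\delta=\delta_0$, $\pi=\sqrt\lambda$, and by Proposition \ref{ramified} one may take $D$ to be unramified at $\pi$ altogether, so again the ramification lies over $(\pi)$ and $(\delta)$.

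Next I would verify that $\Gamma=\Gamma_0\otimes\Gamma_1$ is the maximal $R$-order of $D$, using that $D$ is division. In configuration (i) this is immediate, since then $\Gamma=\Gamma_0$ is Azumaya over $R$; in configurations (ii) and (iv) it is Corollary \ref{maximal-order-2dim}, applied with $u_1\pi_0$, $v_1\delta_0$ playing the role of the parameters $\pi$, $\delta$ (legitimate since Proposition \ref{maximal-order-dvr} allows an arbitrary parameter). For configuration (iii), $D_1=(u_0\pi_0,v_0\delta_0)$, I would check maximality one height-one prime $P$ at a time: for $P\notin\{(\pi),(\delta)\}$, $\Gamma\otimes R_P$ is Azumaya; at $P=(\pi)$ one has $(u_0\pi_0,v_0\delta_0)\cong(v_0\delta_0,u_0\pi_0)$ with $v_0\delta_0$ a unit and $u_0\pi_0$ a parameter of $R_{(\pi)}$, so Proposition \ref{maximal-order-dvr} applies, and symmetrically at $P=(\delta)$; in the ramified quadratic case one combines this with the reduction of Proposition \ref{ramified}.

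Then I would read off the properties of $\pi_D,\delta_D\in\Gamma$ from the table in Notation \ref{maximal-order-inv}. The relations $\sigma(\pi_D)=\pm\pi_D$ and $\sigma(\delta_D)=\pm\delta_D$ hold because each of $\pi_D,\delta_D$ is a central scalar or a pure quaternion $j$ from a single tensor factor, for which the canonical involution gives $\bar\jmath=-j$; the identity $\pi_D\delta_D=\pm\delta_D\pi_D$ and the recorded sign of $\sigma(\pi_D\delta_D)$ then follow because $\sigma$ is an anti-automorphism, noting that $\pi_D$ and $\delta_D$ commute except in configuration (iii), where they are the two standard generators of one quaternion factor and so anticommute. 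The reduced-norm entries come from the multiplicativity $\mathrm{Nrd}_{A\otimes B}(a\otimes b)=\mathrm{Nrd}_A(a)^{\deg B}\,\mathrm{Nrd}_B(b)^{\deg A}$ together with $\mathrm{Nrd}_{(a,b)}(j)=-b$ and $\mathrm{Nrd}_{D_0}(c)=c^{d_0}$ for $c\in F$; the point to confirm in each case is that, because $D_0$ is unramified, the ramification indices $e_0,e_1$ of $D$ at $\pi,\delta$ equal those of the quaternion factors, so that $e_0,e_1\in\{1,2\}$ and $v_\pi(\mathrm{Nrd}(\pi_D))=d/e_0$, $v_\delta(\mathrm{Nrd}(\delta_D))=d/e_1$. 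With this in hand, and the hypothesis on the $a_i$ being exactly the one in Proposition \ref{local2}, that proposition gives $X(F_0)\neq\emptyset$ whenever $X(F_{0\pi_0})\neq\emptyset$ or $X(F_{0\delta})\neq\emptyset$, where $\pi_0$, resp.\ $\delta$, denotes the discrete valuation of $F_0$ lying below $\pi$, resp.\ $\delta$; this is the assertion. The part requiring genuine care is the simultaneous bookkeeping, across the four configurations and the ramified quadratic subcase, of the maximality of $\Gamma$, the exact exponents $d/e_0$, $d/e_1$, and the signs attached to $\sigma$ in the table; everything else is a routine computation in quaternion orders.
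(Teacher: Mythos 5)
Your proposal is correct and follows essentially the same route as the paper's proof: cite Corollary~\ref{maximal-order-2dim} (and, where needed, Proposition~\ref{maximal-order-dvr}) for maximality of $\Gamma$, read off the ramification indices $e_0,e_1\in\{1,2\}$ and the properties of $\pi_D,\delta_D$ from the table in Notation~\ref{maximal-order-inv}, and then invoke Proposition~\ref{local2}. The paper's version is terser -- it simply states the four cases of $e_0,e_1$ and asserts that the table entries satisfy the hypotheses -- whereas you spell out the per-prime verification of maximality and the reduced-norm computations, but the underlying argument is identical.
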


\begin{proof} By (\ref{maximal-order-2dim}),  $\Gamma$ is a maximal $R$-order of $D$. 
 Let $e_0$ be the ramification index of $D$ at $\pi$ and $e_1$   be the ramification index of $D$ at $\delta$.
If $D_1$ as in (\ref{maximal-order-inv}(i)), then $e_0 = e_1 = 1$. If $D_1$ is as in (\ref{maximal-order-inv}(ii)), 
then $e_0 = 2$ and $e_1 = 1$. If 
$D_1$ as in (\ref{maximal-order-inv}(iii) or (iv)), then $e_0 = e_1 = 2$.  Let  $\pi_D$ and $\delta_D$ be as in (\ref{maximal-order-inv}).
Then  $\pi_D$ and $\delta_D$ satisfy the assumptions  of (\ref{local2}). 
Hence,  by (\ref{local2}),  $X(F_0) \neq \emptyset$.
\end{proof}

\section{Behavior under blowups }
\label{blowups}

Let $R_0$ be a 2-dimensional complete regular local ring with maximal ideal $\mathfrak{m}_0 = (\pi_0,\delta_0)$ and residue field $k_0$.
Suppose that char$(k_0) \neq 2$. 
Let $F_0$ be the field of fractions of $R_0$ and let $F = F_0(\sqrt{\lambda})$ be an extension of degree at most  2.
Let    $D \in {}_2\text{Br}(F)$ be a division algebra  with  $F/F_0$-involution $\sigma$  and $h$ an hermitian form over $(D, \sigma)$
and $G(A, \sigma, h)$  be as in (\S \ref{lgp-local}).     
 Let $X$ be a projective homogeneous variety under $G(A, \sigma, h)$ over $F_0$. 
  Suppose that $X(F_{0\nu}) \neq \emptyset$  for all  divisorial discrete valuations $\nu$ of $F_0$.
Under some assumptions on $D$, in this section we prove that there exists a sequence of blowups $\YY$ of Spec$(R_0)$ such that 
$X(F_{0P}) \neq \emptyset$  for all closed points $P$ of $\YY$.

Let $\XX_0 =  Proj(R_0[x, y]/(\pi_0 x  - \delta_0 y))$.
Let $Q_1$ and $Q_2$  be the closed points of $\XX_0$  given by the homogeneous ideals  $(\pi_0,\delta_0, y)$ and 
  $(\pi_0,\delta_0, x)$. 
Let $\tau$ be  the nontrivial automorphism of $F/F_0 $ if $F \neq F_0 $ and let $\tau$ be the identity if $F = F_0 $. 

We begin with the following. 
 
   \begin{lemma}
 \label{blowup-index2} 
Suppose that  $\lambda $ a unit in $R_0$  or 
a unit times $\pi_0$.  
Let $a, b \in R_0 $ be nonzero and square free. Suppose that 
the support of $a$ and $b$ is at most $\pi_0$ and $\delta_0$ and have no common factors.
 Then  for any closed point $P \in \XX_0$, 
there exist  $a', b', \pi', \delta'  \in {\OO}_P$   such that  
the maximal ideal at $P$ is generated by $\pi'$ and $\delta'$,  $a'$ and $b'$ are square free,  have no common factors, 
the support  is at most $(\pi')$ or $(\delta')$ and $ (a, b) \otimes F_{0P} =  (a', b') $ and  $R_0 (a, b) \subset \hat{\OO}_P(a', b')$. 
\end{lemma}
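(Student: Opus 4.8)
The plan is to work with the two standard affine charts of the blow-up $\XX_0$ of $\Spec(R_0)$ at $\mathfrak m_0$, to reduce $(a,b)$ to a short list of normal forms, and in each case to exhibit the required data $(\pi',\delta',a',b')$ together with an isomorphism $(a,b)\otimes F_{0P}\cong(a',b')$ chosen so that the standard generators $i,j,ij$ of the order $R_0(a,b)$ become $\hat{\OO}_P$-linear combinations of the standard generators $i',j'$ of $\hat{\OO}_P(a',b')$.

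First I would reduce to closed points of the exceptional fibre. Since $\XX_0\to\Spec(R_0)$ is projective, hence closed, and $R_0$ is local with closed point $\mathfrak m_0$, every closed point $P$ of $\XX_0$ lies on the fibre $E$ over $\mathfrak m_0$. On the chart $U_x=D_+(x)=\Spec\big(R_0[s]/(\pi_0-\delta_0 s)\big)$, with $s=y/x$, one has $\pi_0=\delta_0 s$ and $E=V(\delta_0)$; symmetrically on $U_y=D_+(y)=\Spec\big(R_0[r]/(\delta_0-\pi_0 r)\big)$, with $r=x/y$, one has $\delta_0=\pi_0 r$ and $E=V(\pi_0)$; both charts are regular of dimension $2$. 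The point $Q_1$ is the unique closed point of $U_x$ at which $s$ vanishes, $Q_2$ the unique closed point of $U_y$ at which $r$ vanishes, and at every other closed point $P$ of $E$ (say $P\in U_x$) the element $s$ is a unit in $\OO_P$, so $\pi_0$ and $\delta_0$ differ by a unit of $\OO_P$ and $\mathfrak m_P=(\delta_0,\pi')$ for a suitable parameter $\pi'$. By symmetry in $\pi_0$ and $\delta_0$ (interchanging the two charts and $Q_1$ with $Q_2$) it suffices to treat $P=Q_1$ — where I take $\pi'=s$, $\delta'=\delta_0$, so $\pi_0=\pi'\delta'$ — and a closed point $P$ of $E$ at which $s$ is a unit — where I take $\delta'=\delta_0$ and write $\pi_0=\epsilon\delta'$ with $\epsilon\in\OO_P^*$.

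Next I would reduce $(a,b)$ to normal form and run the computations. Since $R_0$ is a unique factorization domain and $a,b$ are square-free with support in $\{(\pi_0),(\delta_0)\}$ and no common factor, the symmetries $(a,b)\mapsto(b,a)$ and $\pi_0\leftrightarrow\delta_0$ reduce us to the four cases $(a,b)=(u,v)$, $(u,v\pi_0)$, $(u,v\pi_0\delta_0)$, $(u\pi_0,v\delta_0)$ with $u,v$ units of $R_0$. For $(u,v)$ nothing happens: $u,v$ stay units and $R_0(u,v)\subset\hat{\OO}_P(u,v)$ is base change. At a point $P$ with $\pi_0=\epsilon\delta'$, the three ramified forms become at worst tamely ramified along $(\delta')$: $(u,v\pi_0)=(u,(v\epsilon)\delta')$ needs no change of generators; $(u,v\pi_0\delta_0)=(u,(v\epsilon)(\delta')^2)\cong(u,v\epsilon)$ via $j=\delta' j'$, so $(a,b)$ is split along $E$; and $(u\pi_0,v\delta_0)=(u\epsilon\delta',v\delta')\cong\big((-u\epsilon v),\,v\delta'\big)$ via $i'=(\delta')^{-1}ij$, $j'=j$, where $(i')^2=-u\epsilon v$, $i=v^{-1}i'j'$, $ij=\delta' i'$, all coefficients units. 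The only genuinely two‑dimensional computations are at $P=Q_1$ (and, by symmetry, $Q_2$), where $\pi_0=\pi'\delta'$ with $\pi'=s$ and $\delta'=\delta_0$ both regular parameters: for $b=v\pi_0\delta_0=v\pi'(\delta')^2$, take $j'=\delta_0^{-1}j$ to get $(a,b)\cong(u,v\pi')$ with $j=\delta_0 j'\in\hat{\OO}_{Q_1}(u,v\pi')$; for $(u\pi_0,v\delta_0)=(u\pi'\delta',v\delta')$, take $i'=\delta_0^{-1}ij$, $j'=j$, so $(i')^2=-uv\pi'$, $i=v^{-1}i'j'$, $ij=\delta_0 i'$, hence $(a,b)\cong\big((-uv)\pi',\,v\delta'\big)$ with $R_0(a,b)\subset\hat{\OO}_{Q_1}\big((-uv)\pi',v\delta'\big)$; and for $b=v\pi_0=v\pi'\delta'$ there is nothing to rescale — keep $i'=i$, $j'=j$ and record only that the right‑hand slot is now supported on both $(\pi')$ and $(\delta')$, which is exactly why the conclusion must allow $a'$ or $b'$ to have support the whole set $\{(\pi'),(\delta')\}$. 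In every case $a',b'$ are square-free with support in $\{(\pi'),(\delta')\}$ and no common factor, the displayed isomorphism is over $F_{0P}$, and the order inclusion holds, which proves the lemma.

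I expect the difficulty to lie in the bookkeeping rather than in any single step: correctly identifying the regular parameters at each closed point of $\XX_0$ — so that $\pi_0$ and $\delta_0$ are proportional away from $Q_1,Q_2$ but $\pi_0$ factors as $\pi'\delta'$ at $Q_1$ and $Q_2$ — and, at those two points, choosing the \emph{rescaled} generator $\delta_0^{-1}j$ (resp.\ $\delta_0^{-1}ij$) rather than a naive change of variables. The order inclusion $R_0(a,b)\subset\hat{\OO}_P(a',b')$ genuinely fails for the ``obvious'' isomorphism implementing $(u\pi_0,v\delta_0)\cong((-uv)\pi',v\delta')$ by dividing a generator by $\delta_0$ in the wrong slot; it works precisely because, after replacing the mixed generator $ij$ by $\delta_0^{-1}ij$, the elements $i$, $j$, $ij$ reappear with coefficients in $\hat{\OO}_P$.
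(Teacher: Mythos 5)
Your proof is correct and follows essentially the same approach as the paper's: the same case analysis by the factorization of $(a,b)$ (up to swapping $a\leftrightarrow b$ and $\pi_0\leftrightarrow\delta_0$) and by whether $P$ is one of the nodal points $Q_1,Q_2$ of the exceptional fibre, with the same rewriting of quaternion symbols using $\pi_0=t\delta_0$ at $Q_1,Q_2$ and $\pi_0=$ unit $\cdot\,\delta_0$ elsewhere. You are in fact a bit more explicit than the paper in exhibiting the generator-level rescalings (e.g.\ $j'=\delta_0^{-1}j$, $i'=\delta_0^{-1}ij$) that make the inclusion $R_0(a,b)\subset\hat{\OO}_P(a',b')$ visible rather than merely asserted, and in the sub-case $b=v_0\pi_0\delta_0$ at $Q_1$ you correctly obtain $b'=v_0\pi'$ (since $b=v_0\pi'(\delta')^2$), where the paper's printed $b'=v_0t\delta_0$ looks like a slip.
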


\begin{proof}
Suppose $a$ is a unit   $R_0 $. Then  $b = v_0$ or $v_0 \pi_0$ or $v_0\delta_0$ or $v_0\pi_0\delta_0$ for some
unit $v_0 \in R_0$.  If $b = v_0$ or $v_0 \pi_0$ or $v_0\delta_0$, then  it is easy to see that   
 $a' = a$ and $b' = b$  have the required properties.  Suppose $b = v_0\pi_0\delta_0$. 
 Suppose $P \neq Q_1$, $Q_2$. Then the maximal ideal at $P$ is given by  $(\pi_0, \delta')$ with 
 $\pi_0 = w'\delta_0$ for some unit $w'$ in $\OO_P$.  We have $(a, b) = (a, v_0\pi_0\delta_0) = (a, v_0w')$.
 In this case it is easy to see that $a' = a$ and $b' = v_0w'$ have the required property. 
 Suppose $P = Q_1$. Then the maximal ideal at $P$ is given by $(t, \delta_0)$ with $\pi_0 = t\delta_0$.
 We have $(a, b) = (a, v_0t \delta_0) $ and $a' = a$, $b' = v_0t\delta_0$ have the required properties.  
 The case $P = Q_2$ is similar.

 Suppose neither $a$ nor $b$ is a unit in $R_0$. Then  by the assumption on $a, b$, 
  we have $\{ a, b \} = \{u_0\pi_0 , v_0\delta_0\}$ for some units $u_0, v_0 \in R_0$. 
  Suppose $P = Q_1$. 
  Since   the maximal ideal of  
$\hat{\OO}_{Q_1}$ is given by $(t, \delta_0)$ with $\pi_0 = t\delta_0$,   we have
$(a, b) \otimes F_{0Q_1}  = (u_0\pi_0, v_0\delta_0) \otimes F_{0Q_1}   = (u_0t\delta, v_0\delta) \otimes F_{0Q_1}
\simeq (-u_0v_0t, v_0\delta_0)$.  It is easy to see that  $a' = -u_0v_0t$ and $b' = v_0\delta_0$ have the required properties. 
 The case $P = Q_2$ is similar.  Suppose $P \neq Q_1, Q_2$. 
Since  the maximal ideal at $P$ is given by  $(\pi_0, \delta')$ with 
 $\pi_0 = w'\delta_0 $ for some unit $w'$ in $\OO_P$ and  $(a, b) = (u_0\pi_0, v_0 \delta_0) = (u_0\pi_0, v_0w'\pi_0) = (u_0\pi_0,
 v_0w'u_0)$,
 $a' = u_0\pi_0$ and $b' = v_0w'u_0$ have the required properties.  
\end{proof}

\begin{lemma} 
\label{blowuppoint-index4}
Suppose that  $\lambda $ a unit in $R_0$  or 
a unit times $\pi_0$.  
 Suppose $D  =  (u_0, u_1\pi_0) \otimes (v_0, v_1\delta_0)$ is a division algebra  
for some units $u_i, v_i \in R _0$. 
Let $\sigma$ be the tensor product of canonical involutions on $(u_0, u_1\pi_0)$,  $(v_0,  v_1\delta_0)$.
 Then for  $P = Q_1$ or $Q_2$,    there exist an isomorphism   $ \phi_P :   D \otimes F_{0P} 
  \to   (u_0',  u_1'\pi') \otimes (v_0',  v_1'\delta')$ 
 for some $u_i' , v_i' $ units in   the local ring  ${\OO}_{P}$  at $P$ and  the maximal ideal of ${\OO}_{P}$  
 is given by $(\pi', \delta')$  such that  $ \phi_P(  R_0(u_0, u_1\pi)\otimes R_0(v_0, v_1\delta) )  \subset  {\OO}_{P}(u_0',  u_1'\pi')
\otimes  {\OO}_{P}(v_0', v_1'\delta')$. 
 Further if   $\sigma'$ is  the  tensor product of canonical involutions on $(u_0', u'_1\pi')$ and $(v_0', v_1'\delta')$, then   
 there exists $\phi_P \in  {\OO}_{P}(u_0',  u_1'\pi')
\otimes  {\OO}_{P}(v_0', v_1'\delta')$ such that 
 int$(\theta_{P}) \sigma'  =   \phi_P \sigma \phi_P^{-1} $  and  the 
support of  Nrd$(\theta_{P})$  at most $(\pi')$ and $(\delta')$. 
\end{lemma}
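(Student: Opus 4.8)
The plan is to establish the two assertions one point at a time, and by the symmetry of $D=(u_0,u_1\pi_0)\tensor(v_0,v_1\delta_0)$ under interchanging $(\pi_0,u_0,u_1)$ with $(\delta_0,v_0,v_1)$ (together with the two quaternion factors) it is enough to treat $P=Q_1$; the case $P=Q_2$ is entirely analogous. Since $\sigma$ is a tensor product of canonical involutions it is of the first kind, so all the algebra below takes place over $F_0$. On the chart $x\neq0$ of $\XX_0$, with coordinate $s=y/x$, the relation $\pi_0x=\delta_0y$ becomes $\pi_0=\delta_0s$; thus $\hat{\OO}_{Q_1}$ is a two dimensional complete regular local ring with maximal ideal $(s,\delta_0)$ and residue field $k_0$, with $\pi_0=\delta_0s$ in it, and $F_{0P}=\mathrm{Frac}(\hat{\OO}_{Q_1})$. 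Put $\pi'=s$, $\delta'=\delta_0$, so $\pi_0=\pi'\delta'$. First I would write down the target algebra: the ramification of $D\tensor F_{0P}$ along $(\pi')$ has residue $\overline{u_0}$ and along $(\delta')$ has residue $\overline{u_0v_0}$, and matching these together with the residual unramified Brauer class leads to
\[
A':=(u_0,\,u_1v_1\pi')\tensor_{F_{0P}}(u_0v_0,\,v_1\delta'),
\]
that is, $u_0'=u_0$, $u_1'=u_1v_1$, $v_0'=u_0v_0$, $v_1'=v_1$; one checks in $\Brt(F_{0P})$ that $[(u_0,u_1\pi_0)]+[(v_0,v_1\delta_0)]$ and $[(u_0,u_1v_1\pi')]+[(u_0v_0,v_1\delta')]$ differ by $2[(u_0,v_1)]=0$. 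Both have degree $4$, hence are isomorphic; but I will build the isomorphism by hand in order to keep track of integrality.

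Let $i_1',j_1'$ and $i_2',j_2'$ be the standard generators of the two factors of $A'$, so $(i_1')^2=u_0$, $(j_1')^2=u_1v_1\pi'$, $(i_2')^2=u_0v_0$, $(j_2')^2=v_1\delta'$, the two factors commuting, and let $i_1,j_1$ generate $R_0(u_0,u_1\pi_0)$ and $i_2,j_2$ generate $R_0(v_0,v_1\delta_0)$. I would define $\phi_P\colon D\tensor F_{0P}\to A'$ by
\[
\phi_P(i_1)=i_1'\tensor1,\qquad \phi_P(j_1)=v_1^{-1}(j_1'\tensor j_2'),\qquad \phi_P(i_2)=u_0^{-1}(i_1'\tensor i_2'),\qquad \phi_P(j_2)=1\tensor j_2'.
\]
A direct computation shows these elements satisfy exactly the defining relations of $D\tensor F_{0P}$: their squares are $u_0$, $v_1^{-2}(u_1v_1\pi')(v_1\delta')=u_1\pi_0$, $u_0^{-2}u_0(u_0v_0)=v_0$, and $v_1\delta_0$; $\phi_P(i_1)$ anticommutes with $\phi_P(j_1)$ and $\phi_P(i_2)$ with $\phi_P(j_2)$; and $\phi_P(i_1),\phi_P(j_1)$ commute with $\phi_P(i_2),\phi_P(j_2)$, the two sign changes from $j_1'i_1'=-i_1'j_1'$ and $j_2'i_2'=-i_2'j_2'$ cancelling in the one delicate case. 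So $\phi_P$ extends to an algebra map; it is injective because $D\tensor F_{0P}$ is central simple, and surjective because the images of $i_1,j_1$ and of $i_2,j_2$ generate commuting quaternion subalgebras (each the image of a $4$-dimensional central simple algebra, hence $4$-dimensional) whose product is all of $A'$. Since $v_1^{-1},u_0^{-1}\in R_0^{\times}\subseteq\hat{\OO}_P^{\times}$, all four images lie in $\Lambda':=\hat{\OO}_P(u_0,u_1v_1\pi')\tensor\hat{\OO}_P(u_0v_0,v_1\delta')$, and as $R_0(u_0,u_1\pi_0)\tensor R_0(v_0,v_1\delta_0)$ is generated over $R_0\subseteq\hat{\OO}_P$ by $i_1,j_1,i_2,j_2$ we conclude $\phi_P\bigl(R_0(u_0,u_1\pi_0)\tensor R_0(v_0,v_1\delta_0)\bigr)\subseteq\Lambda'$, which is the first assertion.

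For the last assertion, let $\sigma'$ be the tensor product of the canonical involutions on the two factors of $A'$, so $\sigma'$ sends each of $i_1',j_1',i_2',j_2'$ to its negative, and let $\rho$ be the anti-automorphism of $A'$ with $i_1'\mapsto-i_1'$, $j_1'\mapsto j_1'$, $i_2'\mapsto i_2'$, $j_2'\mapsto-j_2'$ (one checks these assignments respect the defining relations). Since $\sigma$ negates $i_1,j_1,i_2,j_2$, a short computation gives $\rho\circ\phi_P=\phi_P\circ\sigma$ on generators, whence $\rho=\phi_P\sigma\phi_P^{-1}$. Now $\rho$ agrees with $\sigma'$ on $i_1',j_2'$ and is its opposite on $j_1',i_2'$, so I would take $\theta_P=i_1'\tensor j_2'\in\Lambda'$: one verifies that $\mathrm{int}(\theta_P)$ fixes $i_1'$ and $j_2'$ and negates $j_1'$ and $i_2'$ (for instance $\theta_P j_1'\theta_P^{-1}=(u_0v_1\delta')^{-1}\bigl(i_1'j_1'i_1'\tensor(j_2')^2\bigr)=-j_1'$, using $i_1'j_1'i_1'=-u_0j_1'$ and $(j_2')^2=v_1\delta'$), hence $\mathrm{int}(\theta_P)\circ\sigma'$ agrees with $\rho$ on the algebra generators of $A'$ and so $\mathrm{int}(\theta_P)\sigma'=\rho=\phi_P\sigma\phi_P^{-1}$. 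Finally $\theta_P^2=(i_1')^2\tensor(j_2')^2=u_0v_1\delta'$ has odd valuation along $(\delta')$, so $\theta_P\notin F_{0P}$ and its reduced characteristic polynomial in $A'$ is $(T^2-u_0v_1\delta')^2$; therefore $\mathrm{Nrd}(\theta_P)=(u_0v_1\delta')^2$, a unit times $(\delta')^2$, whose support is $\{(\delta')\}\subseteq\{(\pi'),(\delta')\}$.

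For $P=Q_2$ one uses instead the chart $y\neq0$, where $\delta_0=\pi_0r$; the two quaternion factors now switch roles, the target becomes $(u_0v_0,\,u_1\pi')\tensor(v_0,\,v_1u_1\delta')$ with $\pi'=\pi_0$, $\delta'=r$, one takes $\phi_P(i_1)=v_0^{-1}(i_1'\tensor i_2')$, $\phi_P(j_1)=j_1'\tensor1$, $\phi_P(i_2)=1\tensor i_2'$, $\phi_P(j_2)=u_1^{-1}(j_1'\tensor j_2')$ and $\theta_P=j_1'\tensor i_2'$, and $\mathrm{Nrd}(\theta_P)=(u_1v_0\pi')^2$ is supported in $\{(\pi')\}$. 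I expect the main obstacle to be the integrality in the first assertion: one must identify the correct target units (these are forced by the two residues together with the residual Brauer class) and then produce an explicit isomorphism carrying the standard order into $\Lambda'$. The one nonobvious point is that $\phi_P$ cannot send $i_2$ to the evident generator $i_2'$ — because $v_0'=u_0v_0\neq v_0$ in general — but must twist it by the unit $u_0^{-1}i_1'$, which is still integral and still commutes with $\phi_P(i_1)$ and $\phi_P(j_1)$. After that, the verification concerning the involution is short and formal.
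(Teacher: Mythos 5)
Your proposal is correct and follows the paper's approach almost exactly. The paper also works on the affine chart at $Q_1$ with $\pi_0 = t\delta_0$ (your $s$), rewrites $D\otimes F_{0P}$ as a tensor product of quaternion algebras in the form $(u_0, u_1v_1^{-1}t)\otimes(u_0v_0, v_1\delta_0)$, and builds the isomorphism by hand on generators via $\phi(i_1\otimes1)=i_3\otimes1$, $\phi(j_1\otimes1)=j_3\otimes j_4$, $\phi(1\otimes i_2)=u_0^{-1}(i_3\otimes i_4)$, $\phi(1\otimes j_2)=1\otimes j_4$, with $\theta_{Q_1}=i_3\otimes j_4$; your map is the same up to absorbing a square $v_1^2$ into $u_1'$ and compensating with the unit $v_1^{-1}$ in $\phi_P(j_1)$, which changes nothing. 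The only real differences are expository: the paper leaves the properties of $\theta_{Q_1}$ (the identity $\mathrm{int}(\theta_P)\sigma'=\phi_P\sigma\phi_P^{-1}$ and the computation of $\mathrm{Nrd}(\theta_P)$) as "easy to see," whereas you verify them explicitly, and you also write out the $Q_2$ chart while the paper just says the computation is similar. Both of those additions are correct.
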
 

\begin{proof} 
Since  $Q_1$ is the closed point given by the homogeneous ideal $(\pi_0,\delta_0, y)$,  the maximal ideal of 
${\OO}_{Q_1}$ is given by $(t, \delta_0)$ with $\pi_0 = t\delta_0$.
Thus we have $D \otimes F_{0P}  =  (u_0, u_1\pi_0) \otimes (v_0, v_1\delta_0) = 
(u_0, u_1t\delta_0) \otimes (v_0, v_1\delta_0) \simeq (u_0, u_1v_1^{-1}t) \otimes (u_0v_0,  v_1\delta_0)$. 

Let $i_1, j_1 \in (u_0, u_1\pi_0)$, $i_2, j_2 \in (v_0, v_1\delta_0)$,     $i_3, j_3 \in (u_0, u_1v_1^{-1}t)$ and $i_4, j_4 \in (u_0v_0,  v_1\delta_0)$ be the standard generators.
Then  we have an isomorphism $\phi_P :    (u_0, u_1\pi_0) \otimes (v_0, v_1\delta_0)     \to   (u_0, u_1v_1^{-1}t) \otimes (u_0v_0,  v_1\delta_0)$ given by 
 $\phi( i_1 \otimes 1 ) =  i_3 \otimes 1$,   $\phi( j_1 \otimes 1) =  j_3 \otimes j_4$,  $\phi ( 1 \otimes i_2) =  u_0^{-1}(i_3 \otimes i_4)$  and 
 $\phi( 1 \otimes j_2) =  1 \otimes j_4$.  Since $u_0$ is a unit in $R_0$,   $ \phi(  R_0(u_0, u_1\pi_0)
 \otimes R_0(v_0, v_1\delta_0) )  \subset  {\OO}_{Q_i}(u_0, u_1v_1^{-1}t)
\otimes  {\OO}_{Q_i}(u_0v_0, v_1\delta_0)$.  Let  $\theta_{Q_1}  = i_3 \otimes j_4$. Then it is easy to see that $\theta_{Q_1}$    has
 the required properties. 
  A similar   computation  gives the  required $\theta_{ Q_2}$. 
  \end{proof}
  
  \begin{lemma} 
  \label{curve-point-index4}
  Suppose that  $\lambda $ a unit in $R_0$  or 
a unit times $\pi_0$.  
  Suppose $D  =  (u_0, u_1\pi_0) \otimes (v_0, v_1\delta_0)$ is a division algebra  
for some units $u_i, v_i \in R _0$. 
Let $\sigma$ be the tensor product of canonical involutions on $(u_0, u_1\pi_0)$,  $(v_0,  v_1\delta_0)$.
Let $P \in \XX_0$ be a closed point not equal to $Q_1$ or $Q_2$. Then  there exists an isomorphism   $\phi_P : 
   D \otimes F_{0P}  \simeq (u_0', u_1') \otimes (v_0',  v_1'\pi')$ 
 for some $u_0', v_0', u_1', v_1'  $ units in   ${\OO}_P$    and  $m_P = (\pi', \delta')$ such that $ \phi_P( R_0(u, \pi)\otimes R_0(v, \delta) )  \subset  {\OO}_{P}(u_0', u_1')
\otimes  {\OO}_{P}(v_0', v_1'\pi)$. 
 Further if   $\sigma'$ is  the  tensor product of canonical involutions on $(u_0', u'_1)$ and $(v_0', v_1'\pi')$, then   
there exists   $\theta_{P} \in  {\OO}_{P}(u_0', u_1') \otimes  {\OO}_{P}(v_0', v_1'\pi)$   
such that int$(\theta_{P}) \sigma'  =   \phi_P \sigma \phi_P^{-1} $  and  the 
support of  Nrd$(\theta_{P})$  at most $(\pi')$ and $(\delta')$. 
\end{lemma}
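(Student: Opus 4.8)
The plan is to produce $\phi_P$ by an explicit change of generators inside the biquaternion algebra, in the spirit of (\ref{blowuppoint-index4}), the new feature being that for $P\neq Q_1,Q_2$ the two boundary divisors collapse at $P$. First I would record the local picture: since $P$ is a closed point of $\XX_0$ distinct from $Q_1,Q_2$, it lies on the exceptional curve but off the strict transforms of the divisors of $\pi_0$ and $\delta_0$, so in $\OO_P$ one has $\pi_0=w'\delta_0$ for a unit $w'$, the element $\pi':=\delta_0$ is a regular parameter at $P$, and $m_P=(\pi',\delta')$ for a suitable $\delta'$. Rewriting, $D\otimes F_{0P}=(u_0,u_1\pi_0)\otimes(v_0,v_1\delta_0)=(u_0,u_1w'\pi')\otimes(v_0,v_1\pi')$; write $i_1,j_1$ for the standard generators of the first factor and $i_2,j_2$ for those of the second (identified with elements of $D$ in the obvious way). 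The key step is to check that $i_1$ together with $(\pi')^{-1}j_1j_2$ generate a quaternion subalgebra $Q_1'$, that $i_1i_2$ together with $j_2$ generate a quaternion subalgebra $Q_2'$, and that $Q_1'$ and $Q_2'$ commute elementwise. The squares of these four elements are $u_0$, $u_1v_1w'$, $u_0v_0$ and $v_1\pi'$, so $Q_1'\cong(u_0',u_1')$ with $u_0'=u_0$, $u_1'=u_1v_1w'$ (units in $\OO_P$, so $\OO_P(u_0',u_1')$ is Azumaya and $Q_1'$ is unramified at $P$), while $Q_2'\cong(v_0',v_1'\pi')$ with $v_0'=u_0v_0$, $v_1'=v_1$. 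Since the induced $F_{0P}$-algebra homomorphism $(u_0',u_1')\otimes(v_0',v_1'\pi')\to D\otimes F_{0P}$ is nonzero on a central simple algebra and both sides have $F_{0P}$-dimension $16$, it is an isomorphism; take $\phi_P$ to be its inverse.

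For the order statement I would invert the defining relations to read off the action of $\phi_P$ on the standard generators of $D$. Writing $i_3,j_3$ and $i_4,j_4$ for the standard generators of $(u_0',u_1')$ and $(v_0',v_1'\pi')$, one finds $\phi_P(i_1)=i_3$, $\phi_P(j_2)=j_4$, $\phi_P(i_2)=u_0^{-1}i_3i_4$ and $\phi_P(j_1)=v_1^{-1}j_3j_4$. As $u_0,v_1\in\OO_P^*$, each of these lies in $\OO_P(u_0',u_1')\otimes\OO_P(v_0',v_1'\pi')$, so $\phi_P$ carries the standard order $R_0(u_0,u_1\pi_0)\otimes R_0(v_0,v_1\delta_0)$ of $D$ into that $\OO_P$-order, as required.

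Finally, for the involution: $\sigma$ and $\sigma'$ are tensor products of canonical (symplectic) involutions, hence orthogonal involutions of the same kind, so $\phi_P\sigma\phi_P^{-1}=\mathrm{int}(\theta_P)\circ\sigma'$ for some $\theta_P\in(D\otimes F_{0P})^*$, unique up to a scalar. Using the formulas above together with $\sigma(i_\ell)=-i_\ell$, $\sigma(j_\ell)=-j_\ell$, a direct computation shows that $\phi_P\sigma\phi_P^{-1}$ sends each of $i_3,j_3,i_4,j_4$ to $\pm$ itself; hence $(\phi_P\sigma\phi_P^{-1})\circ\sigma'$ negates some subset of these generators and fixes the rest, which is precisely the effect of conjugation by a suitable monomial $\theta_P$ in $i_3,j_3,i_4,j_4$ (conjugation by $i_3,j_3,i_4,j_4$ negates, respectively, $j_3,i_3,j_4,i_4$ and fixes the others, so every sign pattern is realized). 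For our $\phi_P$ this gives $\theta_P=i_3\otimes j_4$; in all cases $\theta_P$ lies in the order $\OO_P(u_0',u_1')\otimes\OO_P(v_0',v_1'\pi')$, and since the reduced norms of $i_3$, $i_4$ and $j_3$ (equal to $-u_1'$) are units whereas $\mathrm{Nrd}(j_4)=-v_1'\pi'$, the reduced norm of any such monomial is a unit times a power of $\pi'$; hence $\mathrm{Nrd}(\theta_P)$ is supported at most at $(\pi')$ and $(\delta')$. The only delicate steps are the commutation relations defining $Q_1',Q_2'$ and the sign bookkeeping in $\phi_P\sigma\phi_P^{-1}$ — both routine, but the natural places for a slip; conceptually everything reduces to the collapse $\pi_0=w'\delta_0$ at $P$ together with standard biquaternion manipulations.
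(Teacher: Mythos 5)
Your proposal is correct and follows essentially the same strategy as the paper: use the collapse of $\pi_0$ and $\delta_0$ at a non-nodal point to regroup the biquaternion algebra into an unramified factor and a factor ramified only at the exceptional parameter, then read off $\phi_P$ on the four standard generators and produce $\theta_P = i_3\otimes j_4$. Your orientation ($\pi_0 = w'\delta_0$, $\pi'=\delta_0$) is the mirror of the paper's ($\delta_0 = w'\pi_0$, $\pi'=\pi_0$) — both are legitimate since at such a $P$ each of $\pi_0,\delta_0$ is a unit times the other — and the effect is only to swap which original quaternion factor contributes the ``unramified'' half; your $\phi_P(i_1)=i_3$, $\phi_P(j_1)=v_1^{-1}j_3j_4$, $\phi_P(i_2)=u_0^{-1}i_3i_4$, $\phi_P(j_2)=j_4$ is precisely the paper's map with the roles of $(i_1,j_1)$ and $(i_2,j_2)$ interchanged. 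Your extra step of first exhibiting the two commuting quaternion subalgebras $Q_1',Q_2'$ is a cleaner way to package what the paper simply asserts via the explicit $\phi_P$, and the verification of $\theta_P = i_3\otimes j_4$ and the support of $\mathrm{Nrd}(\theta_P)$ matches.
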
 

\begin{proof}
Since  $P$ is a closed point not equal to $Q_1$ or $Q_2$, the maximal ideal at $P$ is given by
$(\pi_0, \delta')$ and   $\delta_0 = w'  \pi_0 $  for some unit $w'$ in $\OO_{P}$. 
Thus we have $D  \otimes F_{0P} = (u_0, u_1\pi_0) \otimes (v_0, v_1\delta_0) = (u_0, u_1\pi_0) \otimes (v_0,  v_1 w' \pi_0)
 \simeq (v_0,  v_1 w' u_1^{-1}) \otimes (u_0v_0,  u_1\pi_0)$. 

Let $i_1, j_1 \in (u_0, u_1\pi)$,  $i_2, j_2 \in (v_0, v_1\delta_0)$, 
$i_3, j_3 \in (v_0, v_1 w' u_1^{-1})$ and $i_4, j_4 \in (u_0v_0,  u_1\pi_0)$ be the standard generators.
Then  we have an isomorphism $\phi_P  :    (u_0, u_1\pi_)) \otimes (v_0, v_1\delta_0) \to 
  (v_0,  v_1 w' u_1^{-1}) \otimes (u_0v_0,  u_1\pi_0)$ given by 
 $\phi( i_1 \otimes 1 ) =  v_0^{-1}(i_3 \otimes i_4)$, $\phi( j_1 \otimes 1) =  1 \otimes j_4$,   $\phi ( 1 \otimes i_2) =   (i_3 \otimes 1)$  and 
 $\phi( 1 \otimes j_2) =  j_3 \otimes j_4$.  
 Since $v_0 \in R_0$ is a unit, we have 
 $ \phi_P(  R_0(u_0, u_1\pi)\otimes R_0(v_0, v_1\delta) )  \subset  {\OO}_{P}(v_0,  v_1 w' u_1^{-1}) 
\otimes  {\OO}_{P}(u_0v_0,  u_1\pi)$. Let  $\theta_{P}  = i_3 \otimes j_4$. Then $\theta_{P}$ has
 the required properties. 
   \end{proof}

 The following  two results are   extracted from (\cite[\S 4]{wu}).
 
  \begin{lemma} 
 \label{notiii2}
  Let $F_0$ and $F$ be as above.  Let $D \in Br(F)$ be  a  quaternion division algebra over $F$ with 
 a $F/F_0$-involution $\sigma$. 
 Then there exists a sequence of blowups $\XX_0 \to Spec{R_0}$ such that, 
  the integral closure $\XX$ of $\XX_0$ in $F$ is regular and 
  ram$_{\XX}(D)$ is a union of regular curves with normal crossings.
  Further  for every closed point 
 $P$ of $\XX_0$ with   $D\otimes F_{0P} $ division, 
   $D\otimes F_{0P}  $ is  as in  (\ref{wu-disc} or \ref{disc-2dim-second2}) 
   and not of the type  (\ref{wu-disc}(iii) or  \ref{disc-2dim-second2}(iii)). 
  \end{lemma}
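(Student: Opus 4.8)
The plan is to follow the strategy of \cite[\S 4]{wu}: first use blowups of $\mathrm{Spec}(R_0)$ to put the ramification of $D$ into normal crossings, and then remove the remaining ``bad'' closed points by one further round of blowups. Since $\lambda$ is a unit of $R_0$ or a unit times $\pi_0$, the divisor $\mathrm{div}(\lambda)$ on $\mathrm{Spec}(R_0)$ is reduced, and this is preserved under blowing up closed points (on each chart $\lambda$ is again a unit times a product of distinct regular parameters). When $\lambda$ is a unit, $F/F_0$ is unramified on every blowup of $\mathrm{Spec}(R_0)$, so the normalization in $F$ of any such blowup is automatically regular; when $\lambda = w\pi_0$, one additionally resolves, by further blowups, the finitely many ($A_1$-type) singularities the normalization can acquire over crossing points of the branch locus. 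Thus, using \cite[3.1, 3.2]{parimala-uinvariant} together with embedded resolution of curves on a regular surface (termination by the usual multiplicity invariant, together with finiteness of the singularities appearing upstairs), I would obtain a sequence of blowups $\XX_0 \to \mathrm{Spec}(R_0)$ such that $\XX_0$ and its normalization $\XX$ in $F$ are regular and $\mathrm{ram}_{\XX}(D)$ is a union of regular curves meeting with normal crossings.

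Next I would read off the local structure at a closed point. Let $P$ be a closed point of this $\XX_0$ with $D \otimes F_{0P}$ division, and choose a point $\tilde{P}$ of $\XX$ over $P$. Then $\hat{\OO}_{\tilde{P}}$ is a two dimensional complete regular local ring with maximal ideal $(\pi',\delta')$ and residue characteristic $\neq 2$, and, by the normal crossings condition, $D$ is unramified on $\hat{\OO}_{\tilde{P}}$ away from $(\pi')$ and $(\delta')$. If $F = F_0$ this is exactly the hypothesis of \ref{wu-disc}, so $D \otimes F_{0P}$ is of one of the types (i)--(iv) there. If $[F : F_0] = 2$, then $D \otimes F_{0P}$ inherits the $F/F_0$-involution $\sigma$, and combining \ref{wu-disc} with the descent of \ref{components-invol} and \ref{decent} --- that is, applying \ref{disc-2dim-second2} --- gives that $D \otimes F_{0P}$ is of one of the types (i)--(iv) of that corollary.

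Finally I would arrange that no closed point $P$ is of type (iii), i.e.\ that $D \otimes F_{0P}$ is never isomorphic to $(u, v\pi'\delta')$ (resp.\ $(u_0, u_1\pi_0\delta_0)$) with unit coefficients. If $P$ is such a point, blow up $\XX_0$ at its image; on the two standard charts, where $\pi' = t\delta'$ resp.\ $\delta' = s\pi'$, one has
\[
(u, v\pi'\delta') = (u, vt(\delta')^{2}) = (u, vt), \qquad (u, v\pi'\delta') = (u, vs(\pi')^{2}) = (u, vs),
\]
since $2(u,\pi') = 2(u,\delta') = 0$. Hence the exceptional curve is not in the ramification locus of $D$, the two branches of $\mathrm{ram}_{\XX}(D)$ through $\tilde{P}$ have disjoint strict transforms, and at every new closed point of the exceptional curve $D$ is ramified along at most one regular curve, so is of type (i) or (ii). Therefore each such blowup strictly decreases the number of crossing points of $\mathrm{ram}_{\XX}(D)$ and introduces no new point of type (iii), so after finitely many of them no closed point of $\XX_0$ is of type (iii), while $\XX$ remains regular and $\mathrm{ram}_{\XX}(D)$ remains a normal crossings divisor.

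The step I expect to be the main obstacle is the first one: choosing the blowups so that regularity of $\XX$, the normal crossings condition on $\mathrm{ram}_{\XX}(D)$, and --- after the clean-up --- the absence of type (iii) points all hold at once, and checking that the clean-up blowups neither destroy regularity of $\XX$ nor re-create non-transverse points of $\mathrm{ram}_{\XX}(D)$. Both of the latter follow from the chart computation above, but the overall bookkeeping and termination is the delicate part, for which I would rely on \cite[\S 4]{wu}.
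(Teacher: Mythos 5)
Your overall strategy matches the paper's: first blow up to put the ramification of $D$ in normal crossings with a regular normalization $\XX$, then remove the remaining type~(iii) points by further blowups (the chart computation you carry out is exactly the content of \cite[Lemma 4.1]{wu}, which the paper cites at the same step). There is, however, one point that the paper treats explicitly and that your argument as written does not address: the case where $F\otimes F_{0P}/F_{0P}$ is \emph{ramified}, i.e.\ $\lambda_P$ is a unit times a regular parameter $\pi_P$ of $\OO_P$. In that situation your chart computation does not apply --- you write $\pi'=t\delta'$ for the blowup of $\XX_0$ at $P$, but the regular parameter $\pi'$ of $R_P=\wh{\OO}_{\tilde P}$ is $\sqrt{\lambda_P}$, which does not lie in $\OO_P$, and blowing up $\XX_0$ at such a $P$ would make $\lambda$ a unit times a product of two regular parameters on a chart, reintroducing an $A_1$-singularity in the normalization.

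The paper resolves this by invoking Proposition~\ref{ramified}: when $\lambda_P$ is a unit times $\pi_P$, $D$ is automatically unramified at $\pi'=\sqrt{\lambda_P}$, so $D\otimes F_{0P}$ falls under type~(i) or~(ii) of \ref{disc-2dim-second2} and is never of type~(iii). Consequently the only type~(iii) points one ever needs to blow up are unramified for $F/F_0$, which is precisely where your chart computation is valid and where it correctly shows that the exceptional curve is unramified, the two ramification branches separate, and no new type~(iii) (or type~(iv)) crossing appears. You should add the observation that \ref{ramified} excludes type~(iii) at ramified points; without it, your clean-up procedure is not justified at those points and could, if naively applied, destroy the regularity of $\XX$ that the first step established. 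With that addition, your argument is complete and essentially coincides with the paper's.
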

 
 \begin{proof}
There exists a sequence of blowups $\YY_0 \to Spec(R_0)$ such that 
the integral closure $\YY$ of $\YY_0$ is regular and 
  ram$_{\YY}(D)$ is a union of regular curves with normal crossings (cf. \cite[Corollary 11.3]{parimala2020localglobal}).
  Let $P$ be a closed point of $\YY_0$. 
  Since the integral closure of $\YY_0$ in $F$ is regular, the maximal ideal at $P$ is generated by 
  $(\pi_{P}, \delta_P)$ and  $F = F_0(\sqrt{\lambda_P})$ for some 
  $\lambda_P = u$ or $u\pi_P$ for some unit $u$ at $P$.  
  
  Suppose that   $D\otimes F_{0P}$ is division.  In particular $F \otimes F_{0P}$ is a field.
  Let $R_P$ be the integral closure of $\OO_P$ in $F$. Then $R_P$ is a regular two dimensional local ring 
  with maximal ideal $(\pi'_P, \delta_P')$ with $\delta'_P  = \delta_P$, 
  $\pi_P' = \pi_P$ if $\lambda_P$ is a unit in $\OO_P$  and $\pi_P' = \sqrt{\lambda_P}$ if $\lambda_P$ is not a 
  unit in $\OO_P$.  Further $D$ is unramified on $\OO_P$ except possibly at $(\pi_P')$ or 
  $(\delta_P')$.  In particular $D\otimes F_{0P}  $ is  as in  (\ref{wu-disc} or \ref{disc-2dim-second2}).

  Suppose $D\otimes F_{0P}$  as in (\ref{wu-disc}(iii) or  \ref{disc-2dim-second2}(iii)).    
  Note that there are only finitely many such closed points. 
  
 Suppose $F \otimes F_{0P}/ F_{0P}$ is ramified. Then, by (\ref{ramified}), we can assume that 
  $D\otimes F_{0P}$ is not of type (\ref{disc-2dim-second}(iii)).
  
  Suppose that $F\otimes F_{0P}/ F_{0P}$ is unramified.    Let $\XX_P \to Spec(\OO_P)$ be the simple blow up. 
  Then, it is easy to see that  for  every closed point  $Q$ of $\XX_P$, $D\otimes F_{0Q}$ 
  is not  of type
  (\ref{disc-2dim-second}(iii) or \ref{disc-2dim-second2}(iii)) (cf. \cite[Lemma 4.1]{wu}).
 \end{proof} 

 \begin{prop} 
 \label{index2}
 Let $D$ be  a central simple  algebra over $F$ with 
 a $F/F_0$-involution $\sigma$  and $h$ an hermitian form over $(D, \sigma)$.
  Let $X$ be a projective homogeneous variety under $G(D, \sigma, h)$ over $F_0$.
Suppose that ind$(D) \leq 2$. 
If  that $X(F_{0\nu}) \neq \emptyset$  for all  divisorial discrete valuations $\nu$ of $F_0$, 
  then  there exists a sequence of blowups $\YY \to Spec(R_0)$ such that for every  closed point $P$ of $\YY$,  
 $X(F_{0P}) \neq \emptyset$.  
 \end{prop}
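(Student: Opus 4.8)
The plan is to build the model $\YY$ by a resolution argument that puts the division algebra $D$, the quadratic extension $F/F_0$ and the hermitian form $h$ simultaneously into ``normal crossing position'', and then to verify the conclusion at each closed point $P$ by reducing the local question to Corollary~\ref{local-results-1}. Throughout, by Morita equivalence we may and do assume the division part of $D$ is either split or a quaternion division algebra, since $\mathrm{ind}(D)\le 2$.

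First I would fix a diagonalization $h\simeq\langle a_1,\dots,a_n\rangle$ over $(D,\sigma)$ with the $a_i$ chosen $\sigma$-symmetric, and consider the divisor on $\mathrm{Spec}(R_0)$ supported on $\mathrm{ram}(D)$, on the divisor of $\lambda$ (the branch locus of $F/F_0$), and on $\prod_i\mathrm{Nrd}(a_i)$. Using Lemma~\ref{notiii2} to control $\mathrm{ram}(D)$, together with embedded resolution in dimension two (cf.\ \cite[Corollary 11.3]{parimala2020localglobal}) to make the remaining curves regular and transverse, I would pass to a sequence of blowups $\YY\to\mathrm{Spec}(R_0)$ such that: the integral closure of $\YY$ in $F$ is regular; the union of $\mathrm{ram}(D)$, $\mathrm{div}(\lambda)$ and $\mathrm{div}(\prod_i\mathrm{Nrd}(a_i))$ is a normal crossing union of regular curves; and at every closed point $P$ at which $D\otimes F_{0P}$ is division, $D\otimes F_{0P}$ is of one of the types of Lemma~\ref{wu-disc} or Corollary~\ref{disc-2dim-second2} other than type~(iii). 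Here one uses that blowing up a point to separate the coefficients of $h$ from $\mathrm{ram}(D)$ may create new type~(iii) points of $D$, which then have to be blown up again as in the proof of Lemma~\ref{notiii2}; the point is that this process can be organized so as to terminate.

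Now fix a closed point $P$ of $\YY$, let $\OO_P$ be its local ring with maximal ideal $(\pi',\delta')$, and let $R_P$ be the integral closure of $\hat{\OO}_P$ in $F\otimes F_{0P}$, which is a regular local ring by \cite[Theorems 3.1, 3.2]{parimala-uinvariant} since, by the normal crossing condition, $\lambda$ is a unit of $\OO_P$ or a unit times one of $\pi',\delta'$. Again by the normal crossing conditions, $D\otimes F_{0P}$ is either split or a quaternion division algebra ramified at most along $(\pi')$, or $(\delta')$, or both transversally, and each $\mathrm{Nrd}(a_i)$ is a unit of $\OO_P$ times a power of $\pi'$ and a power of $\delta'$. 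Applying the Morita reduction and the change-of-involution reduction recalled in \S\ref{prel}, together with Lemmas~\ref{blowup-index2}, \ref{blowuppoint-index4} and \ref{curve-point-index4} (which replace $\sigma$ by a tensor product of canonical involutions at the cost of an inner twist $\theta_P$ whose reduced norm is supported on $(\pi')$ and $(\delta')$, and which carry the relevant $R_0$-orders into $\OO_P$-orders), I would bring $(D\otimes F_{0P},\sigma,h)$ into the normalized form of Notation~\ref{maximal-order-inv}, with the unramified factor $D_0$ either split or the base change of a quaternion Azumaya algebra. Then Corollary~\ref{maximal-order-2dim} supplies the required maximal $\OO_P$-order, and Corollary~\ref{local-results-1} applies to the projective homogeneous space over $F_{0P}$ corresponding to $X$.

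Finally I would feed in the hypothesis. The discrete valuations of $F_{0P}$ attached to $(\pi')$ and $(\delta')$ are the restrictions of the divisorial discrete valuations $\nu$ of $F_0$ given by the codimension one points $\overline{V(\pi')}$ and $\overline{V(\delta')}$ of $\YY$, so the completion $F_{0\nu}$ embeds into the corresponding completion of $F_{0P}$. Since $X(F_{0\nu})\ne\emptyset$ for all such $\nu$ by assumption, $X$ acquires a rational point over (at least one of, in fact both of) those completions of $F_{0P}$, and hence $X(F_{0P})\ne\emptyset$ by Corollary~\ref{local-results-1}. As $P$ was an arbitrary closed point of $\YY$, this proves the proposition. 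The step I expect to be the main obstacle is the very first one: producing a \emph{single finite} sequence of blowups that simultaneously keeps $\mathrm{ram}(D)$ free of type~(iii) points, puts the branch locus of $F/F_0$ in normal crossing position, and puts the support of the coefficients of $h$ in normal crossing position, given that improving one of these conditions can temporarily spoil another; this is exactly where Lemma~\ref{notiii2} and the careful blowup bookkeeping behind it are used, the per-point argument afterwards being routine manipulation with reduced norms and maximal orders.
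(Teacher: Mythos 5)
Your proposal takes a genuinely different route from the paper. The paper's proof does not attempt a single resolution of all the data: it first applies Lemma~\ref{notiii2} to put $\mathrm{ram}(D)$ in good position, then invokes Theorem~\ref{finite} to reduce the problem to a \emph{finite} set $\PP_0$ of closed points, and only then, at each $P\in\PP_0$, replaces $R_0$ by $\hat{\OO}_P$, \emph{re}-diagonalizes $h$ over the local maximal order $\hat{\OO}_P(a,b)$, and carries out a second, purely local, resolution (via \cite[Lemma 4.2]{wu}) to put the supports of the new coefficients' reduced norms in normal crossings. You instead attempt one global resolution up front. This might be salvageable, but as written it leaves several gaps that the paper's two-stage design avoids.

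First, you flag a possible circularity between removing type~(iii) points and separating the coefficients of $h$, but the more pressing issue is integrality: after a global diagonalization $h=\langle a_1,\dots,a_n\rangle$ with $a_i\in D$, Corollary~\ref{local-results-1} needs $a_i\in\Gamma_P$ at every closed point $P$ of $\YY$, equivalently $\mathrm{Nrd}(a_i)\in\hat{\OO}_P$. Putting $\mathrm{div}(\mathrm{Nrd}(a_i))$ into normal crossings makes zeros and poles transverse but does not remove poles; you would still have to clear denominators (e.g.\ replace $a_i$ by $c_i^2a_i$ with $c_i\in F_0$) at every $P$, which you never address. The paper avoids this entirely by re-diagonalizing $h$ locally at each bad point with coefficients already in the local order. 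Second, you invoke Lemmas~\ref{blowuppoint-index4} and \ref{curve-point-index4}, but those are specific to biquaternion (index $4$) presentations and exist precisely to produce the inner twist $\theta_P$ matching two different tensor-product involutions; in the index $\le 2$ case $D$ is a quaternion algebra, the canonical involution is the unique symplectic one and is preserved by any isomorphism of quaternion algebras, so there is nothing to twist and those lemmas do not apply. The paper uses only Lemma~\ref{blowup-index2} here. Third, Corollary~\ref{local-results-1} is stated for $D$ \emph{division}; the cases where $D\otimes F_{0P}$ splits (reduce by Morita to quadratic or rank-one hermitian forms and then use \cite[Corollary 4.7]{HHK-refinements} or \cite[Theorem 3.1]{patchingLGPpadic}) and where $F\otimes F_{0P}$ is not a field (where $G$ becomes an inner form of $\GL_n$ and one uses Corollary~\ref{gla}) are handled separately in the paper, and your proposal only gestures at ``Morita reduction'' without saying how to conclude there.
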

 
 \begin{proof} By  Morita equivalence (\cite[Theorem 3.1,3.11,3.20]{knus-involutions} \& \cite[Chapter 1, 9.3.5]{knus-hermitian}), we assume that $D$ is  division.  If $D = F$, then $X(F_0) \neq \emptyset$ (\cite[Corollary 3.12]{wu}) and hence any blowup of 
 Spec$(R)$ has the required property.
 
 Suppose  ind$(D) = 2$.    Then $D \simeq D_0 \otimes_{F_0} F$ for some quaternion division algebra over $F_0$.
   Without loss of generality we assume that $\sigma$ is the canonical involution. 
 Then using (\ref{notiii2}), we get a sequence of blowups  $\YY_0$ of Spec$(R_0 )$ such that 
 for every closed point $P$ of $\YY$ with  $D \otimes F_{0P} $  division,  
  $D\otimes F_{0P} $     is  as in  (\ref{wu-disc} or \ref{disc-2dim-second2}) 
   and not of the type  (\ref{wu-disc}(iii) or  \ref{disc-2dim-second2}(iii)). 
 
 Suppose that $D \otimes F_{0P} $   is not division.  Suppose $F\otimes F_{0P}$ is a field.
 Since  ind$(D) \leq 2$, $D\otimes F_{0P}$ is a matrix algebra and hence by Morita equivalence, $h$ corresponds to a
 quadratic form over $F$. Thus  $X(F_{0P}) \neq \emptyset$ (\cite[Theorem 3.1]{patchingLGPpadic}). 
 Suppose $F\otimes F_{0P}$ is  not a field. Then $F\otimes F_{0P} \simeq F_{0P} \times F_{0P}$,
 $D \otimes F_{0P} \simeq D_0 \otimes F_{0P} \times D_0^{op} \otimes F_{0P}$ and $G(D, \sigma, h) \otimes F_{0P}  \simeq GL(M_n(D_0 \otimes F_{0P}))$ (\cite[p. 346]{knus-involutions}).
 Hence by (\ref{gla}), $X(F_{0P}) \neq \emptyset$. 
  
 Suppose that $D \otimes F_{0P} $   is  division.
 Then  
 $D\otimes F_P  = (a_p, b_P)$ for some $a_P, b_P$ as in (\ref{blowup-index2}).  
 In  particular $\OO_P(a_P, b_P) \otimes R$ is a maximal $\OO_P$-order of 
 $D\otimes F_{0P}  $.  We have $h\otimes F_{0P}  = \langle a_{1P}, \cdots , a_{nP}\rangle$ for some $a_{iP} \in \hat{\OO}_P(a, b)\otimes R$. 

Let $Y_0$ be the special fibre of $\YY_0$.  By (\ref{finite}), there exists a  finite subset $\PP$ of $Y_0$ such that 
$X(F_P ) \neq \emptyset $ for all $P \not \in \PP_0$.   Thus replacing $R $ by $\hat{\OO}_P$, we assume that 
$D  = (a, b)$ for some $a, b$ as in (\ref{blowup-index2}) and $h  = \langle a_1, \cdots , a_n\rangle$ for some $a_i \in R(a, b)$.

By (\cite[Lemma 4.2]{wu}), there exists a sequence of blowups  $\YY_1 \to Spec(R_0)$  
 such that the support of  Nrd$(a_i)$ is a union of regular curves with normal crossings
and for every closed point $Q$ of $\YY_1$ with  $D\otimes F_{0Q}$ is  division, 
$D\otimes F_{0Q}$  is  not of the form ( \cite[Lemma 3.6(5)]{wu}).  

Let $Q$ be a closed point of $\YY_1$.   If $D \otimes F_{0Q}$ is  a matrix algebra, then by (\cite[Corollary 4.7]{HHK-refinements}) and Morita equivalence, $X(F_{0Q}) \neq \emptyset$. 
Suppose $D \otimes F_{0Q}$ is  a division algebra.  Then,   by (\ref{blowup-index2}), $R (a, b)$ is contained in the 
corresponding maximal order  $\hat{\OO}_Q(a_Q, b_Q)$. 
Since  $h = \langle a_1 , \cdots , a_n \rangle$ with $a_i \in R (a, b)$ with support of Nrd$(a_i)$ is a union of regular curves with normal crossings, 
  by (\ref{local-results-1}), $X(F_{0Q}) \neq \emptyset$. 
 \end{proof}

 \begin{lemma} 
 \label{notiii}
  Let $D \in {}_2Br(F)$ be  a central simple  algebra over $F$ with 
 a $F/F_0$-involution $\sigma$  and $h$ an hermitian form over $(D, \sigma)$.
   Suppose that    ind$(D) = 4$. 
 Then there exists a sequence of blowups $\XX_0 \to Spec{R_0}$ such that, 
  the integral closure $\XX$ of $\XX_0$ in $F$ is regular and 
  ram$_{\XX}(D)$ is a union of regular curves with normal crossings.
  Further  for every closed point 
 $P$ of $\XX_0$ with ind$(D\otimes F_{0P} ) = 4$, 
   $D\otimes F_{0P} $ is  as in  (\ref{disc-2dim} or \ref{disc-2dim-second}) and not of the type 
    (\ref{disc-2dim}(iii) or \ref{disc-2dim-second}(iii)). 
  \end{lemma}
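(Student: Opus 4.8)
The plan is to carry over the proof of Lemma~\ref{notiii2} with the quaternion classification \ref{wu-disc} replaced by the index-$4$ classifications of Proposition~\ref{disc-2dim} (for $\sigma$ of the first kind) and Corollary~\ref{disc-2dim-second} (for $\sigma$ of the second kind). First I would apply \cite[Corollary 11.3]{parimala2020localglobal} to obtain a sequence of blowups $\YY_0 \to \Spec(R_0)$ for which the integral closure $\YY$ of $\YY_0$ in $F$ is regular and $\mathrm{ram}_{\YY}(D)$ is a union of regular curves with normal crossings. At a closed point $P$ of $\YY_0$ the maximal ideal is then $(\pi_P,\delta_P)$, and $F = F_0(\sqrt{\lambda_P})$ with $\lambda_P$ a unit or a unit times $\pi_P$; when $\mathrm{ind}(D\otimes F_{0P}) = 4$ and $F\otimes F_{0P}$ is a field, the integral closure $R_P$ of $\OO_P$ in $F$ is a two-dimensional regular local ring with maximal ideal $(\pi_P',\delta_P')$ and $D$ is unramified on $R_P$ away from $(\pi_P')$ and $(\delta_P')$, so $D\otimes F_{0P}$ is one of the forms listed in \ref{disc-2dim} or \ref{disc-2dim-second} (if instead $F\otimes F_{0P}$ splits, then $D\otimes F_{0P}\simeq A\times A^{\mathrm{op}}$ and \ref{disc-2dim} applies verbatim to $A\in{}_2\mathrm{Br}(F_{0P})$). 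The only shape to be excluded is type~(iii), namely $(u,v)\otimes(w,t\pi_P'\delta_P')$, which forces $D$ to be ramified along both branches through $P$; hence the offending points lie among the finitely many crossing points of $\mathrm{ram}_{\YY}(D)$.

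I would then remove each of these finitely many type-(iii) points $P$ by one further simple blowup, following the unramified case of \ref{notiii2}. If $F/F_0$ is ramified at $P$, then Proposition~\ref{ramified}, applied with $\OO_P$ in the role of $R_0$, shows that $D\otimes F_{0P}$ is unramified at $(\pi_P')$ and so cannot be of type~(iii); no blowup is needed there. If $F/F_0$ is unramified at $P$ (in particular if $F = F_0$), I would blow up $\Spec(\OO_P)$ at its closed point. In each of the two standard affine charts, with new coordinates $\xi$ and $\eta$ respectively, the product $\pi_P\delta_P$ equals a unit times a square times $\xi$ on one chart and a unit times a square times $\eta$ on the other; thus the ramified slot $t\pi_P'\delta_P'$ of a type-(iii) algebra turns into $t\xi$ (resp.\ $t\eta$), which is ramified along a single coordinate curve at the two special points $Q_1,Q_2$ --- hence type~(ii) there --- and a unit at every other closed point of the exceptional curve, where therefore $D$ is unramified and the point is of type~(i) whenever its index is still $4$. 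So no closed point of the blowup is again of type~(iii); this is the index-$4$ analogue of the computation in \cite[Lemma 4.1]{wu}, the unramified quaternion $(u,v)$ being carried along without change. Performing these blowups one after another yields the desired $\XX_0$, and since blowing up a regular closed point of a regular surface preserves regularity, and the blowup of a crossing point of a normal-crossings divisor is again a normal-crossings divisor, the first two conclusions of the lemma survive.

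The main obstacle is this last explicit verification: checking chart by chart and at every closed point of each new exceptional curve that the type-(iii) structure is destroyed and, crucially, that no newly created point --- nor any admissible point already present, once all the blowups have been performed in succession --- is again of type~(iii); in the second-kind case one must additionally track how the degree-$2$ extension $F/F_0$ behaves in each chart and where Proposition~\ref{ramified} applies along the ramified locus of $F/F_0$. Granting these routine but somewhat lengthy bookkeeping computations, the statement follows by combining \ref{disc-2dim}, \ref{disc-2dim-second} and \ref{ramified} exactly as in the proof of \ref{notiii2}.
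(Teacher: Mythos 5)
Your proposal matches the paper's proof essentially step for step: apply \cite[Corollary 11.3]{parimala2020localglobal} to obtain a regular model with normal-crossings ramification, observe via \ref{disc-2dim} and \ref{disc-2dim-second} that every index-$4$ completion $D\otimes F_{0P}$ falls into one of the listed shapes, invoke Proposition~\ref{ramified} to dispose of type~(iii) at points where $F/F_0$ ramifies, and blow up once more at each of the finitely many remaining type-(iii) points where $F/F_0$ is unramified. The chart computation you sketch (that $\pi_P\delta_P$ becomes a unit times a square times a single coordinate, so the ramified slot degenerates to type~(ii) at the two nodes and to type~(i) elsewhere) is exactly the content the paper delegates to ``it is easy to see (cf.\ \cite[Lemma 4.1]{wu})'' — you have simply made it explicit, together with the observation that type-(iii) points are necessarily crossing points of $\mathrm{ram}_{\YY}(D)$, hence finite in number; your additional remark about the case $F\otimes F_{0P}$ split is a harmless amplification that the paper leaves implicit.
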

 
 \begin{proof}
 There exists a sequence of blowups $\YY_0 \to Spec(R_0)$ such that 
the integral closure $\YY$ of $\YY_0$ is regular and 
  ram$_{\YY}(D)$ is a union of regular curves with normal crossings (cf. \cite[Corollary 11.3]{parimala2020localglobal}).
   
  Let $P$ be a closed point of $\YY_0$.    In particular  if ind($D\otimes F_{0P}) = 4$, 
  then $D\otimes F_{0P}$ is as in  (\ref{disc-2dim} or \ref{disc-2dim-second}).
  Suppose $D\otimes F_{0P} = (u_P, v_P) \otimes (w_P,  w_P' \pi_P\delta_P)$ as in (\ref{disc-2dim}(iii)
   or \ref{disc-2dim-second}(iii)) for some 
  units $u_P, v_P, w_P, w_P' \in \OO_P$.  Note that there are only finitely many such closed points. 
  
  Suppose $F \otimes F_{0P}/ F_{0P}$ is ramified. Then, by (\ref{ramified}), we can assume that 
  $D\otimes F_{0P}$ is not of type (\ref{disc-2dim-second}(iii)).
  
  Suppose that $F\otimes F_{0P}/ F_{0P}$ is unramified.    Let $\XX_P \to Spec(\OO_P)$ be the simple blow up. 
  Then, it is easy to see that  for  every closed point  $Q$ of $\XX_P$, $D\otimes F_{0Q}$ 
  is not  of type  
  (\ref{disc-2dim}(iii) or \ref{disc-2dim-second}(iii)).
 \end{proof}

\begin{prop}
 \label{index4}
 Let $D \in {}_2Br(F)$ be  a central simple  algebra over $F$ with 
 a $F/F_0$-involution $\sigma$  and $h$ an hermitian form over $(D, \sigma)$.
  Let $X$ be a projective homogeneous variety under $G(D, \sigma, h)$ over $F_0$.
 Suppose that  ind$(D) \leq 4$.
 If $X(F_{0\nu}) \neq \emptyset$  for all  divisorial discrete valuations $\nu$ of $F_0$, 
 then  there exists a sequence of blowups $\YY \to Spec(R_0)$ such that for every  closed point $P$ of $\YY$,  
 $X(F_{0P}) \neq \emptyset$. 
  \end{prop}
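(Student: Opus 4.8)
The plan is to run the argument of \ref{index2} almost verbatim, feeding in the structure theorems of \S\ref{disc}--\S\ref{disc2} and the maximal orders of \S\ref{maximal-order} wherever that proof invoked its purely quaternionic counterparts.

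First, by Morita equivalence (\cite[Chapter 1, 9.3.5]{knus-hermitian}) and the reductions recorded in \S\ref{prel}, I would assume $D$ is a division algebra; since replacing $(\sigma,h)$ by an inner twist $(\mathrm{int}(u)\sigma,uh)$ does not affect whether the associated homogeneous space has a rational point over a given extension, I may also normalise $\sigma$ freely. If $\mathrm{ind}(D)\le 2$ the statement is exactly \ref{index2}, so I assume $\mathrm{ind}(D)=4$; since $\deg(D)=4$, at a closed point $P$ the algebra $D\otimes F_{0P}$ is a division algebra iff $\mathrm{ind}(D\otimes F_{0P})=4$ and otherwise has index $1$ or $2$. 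Apply \ref{notiii} to obtain a sequence of blowups $\YY_0\to\Spec(R_0)$ whose integral closure $\YY$ in $F$ is regular, on which $\mathrm{ram}_\YY(D)$ is a normal crossings union of regular curves, and at whose closed points $P$ of index $4$ the algebra $D\otimes F_{0P}$ is one of the forms of \ref{disc-2dim} (resp. \ref{disc-2dim-second}) other than type (iii). By \ref{finite}, only finitely many closed points of the special fibre remain, and for each such $P$ I would replace $R_0$ by $\hat{\OO}_P$ --- again a two dimensional complete regular local ring of the kind studied in \S\ref{lgp-local} --- so that from now on $D$ is already one of those forms. Whenever the index drops, the point is disposed of directly: if $D\otimes F_{0Q}$ is split then $h$ becomes a quadratic form and $X(F_{0Q})\ne\emptyset$ by \cite[Corollary 4.7]{HHK-refinements} (or \cite[Theorem 3.1]{patchingLGPpadic}) and Morita equivalence; if $F\otimes F_{0Q}$ is not a field the group is of $\GL$-type and \ref{gla} applies; and if $\mathrm{ind}(D\otimes F_{0Q})=2$ one invokes \ref{index2} over $\hat{\OO}_Q$ and absorbs the blowups it produces.

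In the surviving cases $D\simeq D_0\otimes D_1$ with $D_0$ Azumaya over $R$ carrying an $R/R_0$-involution and $D_1\in\{F_0,\ (u_0,u_1\pi_0),\ (u_0,u_1\pi_0)\otimes(v_0,v_1\delta_0),\ (u_0\pi_0,v_0\delta_0)\}$, and \ref{maximal-order-2dim} together with \ref{maximal-order-inv} furnishes an explicit $\sigma$-stable maximal $R$-order $\Gamma=\Gamma_0\otimes\Gamma_1$ and elements $\pi_D,\delta_D\in\Gamma$ with the reduced norms and symmetry relations tabulated in \ref{maximal-order-inv}. I would then diagonalise $h=\langle a_1,\dots,a_n\rangle$ and, after rescaling by suitable elements of $R_0$, assume $a_i\in\Gamma$; a further sequence of blowups, as in \cite[Lemma 4.2]{wu} and \ref{notiii}, makes the support of each $\mathrm{Nrd}(a_i)$ a normal crossings union of regular curves while keeping $D$ away from type (iii). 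At a closed point $Q$ of the resulting model with $\mathrm{ind}(D\otimes F_{0Q})=4$, Lemmas \ref{blowuppoint-index4} and \ref{curve-point-index4}, together with their analogue for the $(u_0\pi_0,v_0\delta_0)$-case, produce an isomorphism carrying $\Gamma$ into the corresponding local maximal order and $\sigma$ into $\mathrm{int}(\theta_Q)\sigma'$ with $\mathrm{Nrd}(\theta_Q)$ supported on the local parameters; replacing $h$ by the resulting inner twist, the hypotheses of \ref{local-results-1} (through \ref{local2}) are met --- the required points over $F_{0Q,\pi'}$ or $F_{0Q,\delta'}$ being inherited from the global assumption --- so $X(F_{0Q})\ne\emptyset$. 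Patching the various blowup sequences over $\Spec(R_0)$ then yields the desired $\YY$.

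The hard part will be the two most ramified index $4$ cases, types (iv) $(u,w\pi)\otimes(v,t\delta)$ and (v) $(u,v)\otimes(w\pi,t\delta)$ of \ref{disc-2dim}: one must arrange, by a controlled finite sequence of blowups, that (i) the forbidden type (iii) $(u,v)\otimes(w,t\pi\delta)$ --- for which the construction of \S\ref{maximal-order} does not supply an order with the separate uniformizers $\pi_D,\delta_D$ needed by \ref{local-results-1} --- is never created; (ii) the algebra always stays in a shape for which such $\pi_D,\delta_D$ do exist; and (iii) the diagonal entries $a_i$ of $h$ are carried into the new maximal order with reduced norms still supported on the local parameters. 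Securing (i)--(iii) simultaneously is precisely what \ref{blowuppoint-index4}--\ref{curve-point-index4} and the corresponding computation for type (v) accomplish, and it is the only place where the hypothesis $\mathrm{ind}(D)\le 4$ is genuinely used; the rest is formal once that input is in hand.
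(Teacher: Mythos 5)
Your plan reproduces the paper's argument essentially step by step: Morita reduction, reduction to $\mathrm{ind}(D)=4$, Lemma~\ref{notiii} to avoid type~(iii), localisation at the finitely many bad closed points via Theorem~\ref{finite}, disposal of the index-drop cases exactly as in \ref{index2}, the choice of $\sigma$-stable maximal order from \ref{maximal-order-2dim}/\ref{maximal-order-inv}, diagonalisation of $h$ with entries in that order, a further blowup to put the $\mathrm{Nrd}(a_i)$ in normal crossings while staying away from type~(iii), and then \ref{local-results-1} at each remaining point. The one place you diverge, and slightly overcomplicate, is the assertion that type (v), $D\simeq(u,v)\otimes(w\pi_0,t\delta_0)$, needs a companion of Lemmas \ref{blowuppoint-index4}/\ref{curve-point-index4}. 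In the paper that case, like types (i) and (ii), is disposed of by the simpler Lemma~\ref{blowup-index2}: writing the ramified part as a single quaternion algebra $(a,b)$ with $a,b$ square-free, supported on $\pi_0,\delta_0$, and coprime, that lemma already produces the local isomorphism $(a,b)\otimes F_{0P}\simeq(a_P,b_P)$ and the inclusion of orders $R_0(a,b)\subset\hat{\OO}_P(a_P,b_P)$, and because the canonical involution on a quaternion algebra is intrinsic no twist $\theta_P$ is required. Only type~(iv), $(u_0,u_1\pi_0)\otimes(v_0,v_1\delta_0)$, forces the isomorphism that scrambles the two quaternion factors — and therefore the involution — and hence genuinely needs the explicit $\phi_P,\theta_P$ of \ref{blowuppoint-index4} and \ref{curve-point-index4}. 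With that adjustment your argument agrees with the paper's.
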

  
 \begin{proof}  By (\ref{index2}), we assume that ind$(D) = 4$.
 As in the proof of (\ref{index2}), we assume that
$D$ is division   as in  (\ref{disc-2dim} or \ref{disc-2dim-second} ) and not of the type 
    (\ref{disc-2dim}(iii) or \ref{disc-2dim-second}(iii)).
Let $\Gamma$ be the maximal $R$-order of $D$  as in (\ref{maximal-order-2dim}) 
and write  
 $h  = \langle a_1, \cdots , a_n\rangle$  with $a_i \in  \Gamma$. 
  
 Let  $\YY_1 \to Spec(R_0 )$ be a sequence of blowups such that  the support of Nrd$(a_i)$ is a union of 
 regular curves with normal crossings.
 Further replacing $\YY$ by a sequence of blow ups (\ref{notiii}),  we assume that for 
  every closed point $P$ of $\YY$, $ D\otimes F_{0P} $ is not of the form (\ref{disc-2dim}(iii) or \ref{disc-2dim-second}(iii)).  
  Once again we have a finite set of closed points $\PP_1$ of $\YY_1$ such that 
  $X(F_{0P}) \neq \emptyset$ for all $P \not\in \PP$.

Let $P \in \PP_1$ be a closed point. 
Suppose that $F\otimes F_{0P}$ is not a field. Then as in the proof of  (\ref{index2}),
$X(F_{0P}) \neq \emptyset$. 

Suppose that $F\otimes F_{0P}$ is a field. 
 If ind$(D \otimes F_{0P} ) \leq 2$, then  by (\ref{index2}), there exists a sequence of blowups $\XX_P$ of Spec$(\OO_P)$ such that 
 for every closed point $Q$ of $\XX_P$,   $X(F_{0Q}) \neq \emptyset$. 
 
 Suppose ind$(D\otimes F_P ) = 4$.      
 
 Suppose  that $D$ is not of type (\ref{disc-2dim}(iv) or \ref{disc-2dim-second}(iv)).
 Then  $D \simeq (u_0, w_0) \otimes (a, b)$ for some units $u, v, \in R_0$ and $a, b \in R_0 $ as in (\ref{blowup-index2}). 
 Then, by the choice, we have $\Gamma = R(u_0, v_0) \otimes R(a, b)$.
  By (\ref{blowup-index2}), $(a, b )  \otimes  F_{0P} \simeq (a_P, b_P)$ for some $a_P, b_P \in \OO_P$ as in (\ref{blowup-index2}) 
  and $R_0(a, b) \subset \hat{\OO}_P(a_P, b_P)$.  In particular $\Gamma \subset \hat{\OO}_P(u_0, v_0) \otimes \hat{\OO}_P(a_P, b_P)$.
  Since $a_i \in \Gamma$  and $D \otimes F_{0P}$ is not of type (\ref{disc-2dim}(iii)),  by  (\ref{local-results-1}),
 $X(F_{0P}) \neq \emptyset$.

  Suppose  that $D$ is   of type (\ref{disc-2dim}(iv) or  \ref{disc-2dim-second}(iv)).
  Then $D \simeq (u_0, u_1\pi_0) \otimes (v_0, v_1\delta_0)$ for some units $u_i, v_i  \in R_0$.  
  
 Suppose $P$ is a nodal point of $\YY_1$. Then,  by (\ref{blowuppoint-index4}),   there exists an isomorphism 
 $\phi_P : D \otimes F_{0P} \to (u_0', w_1'\pi_P) \otimes (v_0', v_1'\delta_P) $   and $\theta_P \in \hat{\OO}_P(u_0', w_1'\pi_P) \otimes \hat{\OO}_P(v_0', v_1'\delta_P)$
  such that 
 $\phi_P (  R_0(u_0, u_1\pi_0) \otimes R_0(v_0, v_1\pi_0)) \subset  
 \hat{\OO}_P(u_0', w_1'\pi_P) \otimes \hat{\OO}_P(v_0', v_1'\delta_P) $
 and $int(\theta_P) \sigma = \phi_P^{-1}\sigma'\phi_P$, where $\sigma'$ is the product of the canonical involutions on the right hand side. 
 Let $h'$ be the hermitian form on   $( (u_0', w_1'\pi_P) \otimes (v_0', v_1'\delta_P)), \phi_P \sigma \phi_P^{-1})$   which is the image of $h$ under $\phi_P$. 
 Since $h = \langle a_1, \cdots, a_n\rangle$, we have $h_1 = \langle\phi_P(a_1), \cdots , \phi_P(a_n)\rangle$. 
 Let $h' = \theta_P h_1$. Then $h'$ is an hermitian form with  respect $\sigma'$. 
 Let $X'$ be the projective homogeneous variety under $G( (u_0', w_1'\pi_P) \otimes (v_0', v_1'\delta_P),  \sigma', h')$ associated to $X$. 
 Then $X(F_{0P}) \neq \emptyset$ if and only if $X'(F_{0P}) \neq \emptyset$. 
 Since $\phi_P(a_i) , \theta_P \in  \hat{\OO}_P(u_0', w_1'\pi_P) \otimes \hat{\OO}_P(v_0', v_1'\delta_P)$,
 by  (\ref{local-results-1}), $X'(F_{0P}) \neq \emptyset$ and hence $X(F_{0P}) \neq \emptyset$.  
 
 If $P$ is a non-nodal point, then using (\ref{curve-point-index4}), we get $X(F_{0P}) \neq \emptyset$ as above. 
 \end{proof}

 \begin{prop}
 \label{index8} 
   Let $k$, $F_0$ and $F$ be as above.  
    Suppose that for finite extension $\ell/k$, every element in 
  ${}_2Br(\ell)$ has index at most 2. 
   Let $D \in {}_2Br(F)$ be  a central simple  algebra over $F$ with 
 a $F/F_0$-involution $\sigma$  and $h$ an hermitian form over $(D, \sigma)$.
  Let $X$ be a projective homogeneous variety under $G(D, \sigma, h)$ over $F_0$.
 If $X(F_{0\nu}) \neq \emptyset$  for all  divisorial discrete valuations $\nu$ of $F_0$, 
 then  there exists a sequence of blowups $\YY \to Spec(R_0)$ such that for every  closed point $P$ of $\YY$,  
 $X(F_{0P}) \neq \emptyset$. 
 \end{prop}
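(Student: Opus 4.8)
The plan is to run the argument of Proposition \ref{index4} one index level higher, using that Corollary \ref{disc-2dim-second8} produces only a single shape for an index-$8$ algebra with involution over a two-dimensional complete local field. First I would make the standard reductions: by Morita equivalence assume $D$ is division, and by Propositions \ref{index2} and \ref{index4} assume $\mathrm{ind}(D)=8$; using the fact recalled in \S\ref{prel} that two involutions of the same kind differ by an inner automorphism, I may also replace $h$, $\sigma$, $X$ so that $\sigma$ is the standard involution $\sigma_0\otimes\sigma_1$ of Notation \ref{maximal-order-inv}, with $\sigma_1$ a tensor product of canonical involutions. Then, using \cite[Corollary 11.3]{parimala2020localglobal}, I would take a sequence of blow-ups $\XX_0\to\mathrm{Spec}(R_0)$ whose integral closure $\XX$ in $F$ is regular and on which $\mathrm{ram}_\XX(D)$ is a normal-crossing union of regular curves, and, as in the proof of Proposition \ref{index2}, invoke Theorem \ref{finite} to reduce to the finitely many closed points $P$ of the closed fibre where $X$ is not yet known to have a rational point.

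At such a point $P$, working over $\hat{\OO}_P$, I would split into cases. If $F\otimes F_{0P}$ is not a field, then $D\otimes F_{0P}\simeq A_P\times A_P^{\mathrm{op}}$ and $G(D,\sigma,h)\otimes F_{0P}$ is a general linear group, so Corollary \ref{gla} gives a rational point, exactly as in the proof of Proposition \ref{index2}. If $F\otimes F_{0P}$ is a ramified quadratic field extension, then by Proposition \ref{ramified} the algebra $D\otimes F_{0P}$ is $(D_0\otimes(v_0,\delta_0))\otimes F_{0P}$ with $D_0$ unramified on $\hat{\OO}_P$, hence of index at most $2$ by the index hypothesis on residue fields, so $\mathrm{ind}(D\otimes F_{0P})\le 4$ and Proposition \ref{index4} applies after further blow-ups; the same conclusion holds if $F\otimes F_{0P}$ is an unramified field extension with $\mathrm{ind}(D\otimes F_{0P})\le 4$. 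This leaves the case where $F\otimes F_{0P}$ is an unramified field extension and $\mathrm{ind}(D\otimes F_{0P})=8$.

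In that remaining case, by Proposition \ref{deg8} and Corollary \ref{disc-2dim-second8}, $D\otimes F_{0P}\simeq(w_0,w_1)\otimes(u_0,\pi_P)\otimes(v_0,\delta_P)$ for units $w_0,w_1,u_0,v_0$ in $\OO_P$ and the local parameters $\pi_P,\delta_P$; by Corollary \ref{maximal-order-2dim} the order $\Gamma=\hat{\OO}_P(w_0,w_1)\otimes\hat{\OO}_P(u_0,\pi_P)\otimes\hat{\OO}_P(v_0,\delta_P)$ is the maximal $\hat{\OO}_P$-order, and it is of the shape of Notation \ref{maximal-order-inv} with $D_0=(w_0,w_1)$ and $D_1=(u_0,\pi_P)\otimes(v_0,\delta_P)$. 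Writing $h=\langle a_1,\cdots,a_n\rangle$ with $a_i\in\Gamma$, I would blow up $\mathrm{Spec}(\hat{\OO}_P)$ so that the support of each $\mathrm{Nrd}(a_i)$ becomes a normal-crossing union of regular curves. Since the quaternion factor $(w_0,w_1)$ is Azumaya it simply rides along under every chart change; so at a closed point $Q$ of this last blow-up, if $\mathrm{ind}(D\otimes F_{0Q})\le 4$ I recurse into Proposition \ref{index4}, and otherwise Lemmas \ref{blowuppoint-index4} and \ref{curve-point-index4} — applied to the factor $(u_0,\pi_P)\otimes(v_0,\delta_P)$ and tensored with $\hat{\OO}_Q(w_0,w_1)$ — produce an isomorphism $\phi_Q$ of $D\otimes F_{0Q}$ onto an algebra as in \ref{maximal-order-inv}(iv) carrying $\Gamma$ into the corresponding maximal $\hat{\OO}_Q$-order, together with an element $\theta_Q$ of that order satisfying $\mathrm{int}(\theta_Q)\sigma'=\phi_Q\sigma\phi_Q^{-1}$ and $\mathrm{Nrd}(\theta_Q)$ supported on the two local parameters. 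Replacing $h$ by $\theta_Q h_Q$ with $h_Q=\langle\phi_Q(a_1),\cdots,\phi_Q(a_n)\rangle$, and $X$ by the corresponding projective homogeneous variety $X'$ over $F_0$, Corollary \ref{local-results-1} gives $X'(F_{0Q})\neq\emptyset$, hence $X(F_{0Q})\neq\emptyset$. Assembling the finitely many local blow-up sequences into a single $\YY\to\mathrm{Spec}(R_0)$ completes the argument.

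The hard part will be purely organizational: checking that the single index-$8$ shape $(w_0,w_1)\otimes(u_0,\pi_0)\otimes(v_0,\delta_0)$ is stable under the chart changes coming from the normal-crossing resolutions and never degenerates, at an intermediate closed point of index still $8$, into a form lying outside the list of Notation \ref{maximal-order-inv} — in contrast with the index-$4$ case there is no analogue of the ``bad'' type \ref{disc-2dim}(iii) to eliminate here, since \ref{disc-2dim-second8} already delivers exactly the shape \ref{maximal-order-inv}(iv) — and verifying that the recursions into Proposition \ref{index4} at the points where the index drops glue with the index-$8$ blow-ups into one sequence over $\mathrm{Spec}(R_0)$. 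The substantive inputs (existence of the right maximal orders, the behaviour of these orders under blow-ups, and the two-parameter local--global principle for hermitian forms) are already in place as \ref{maximal-order-2dim}, \ref{blowuppoint-index4}, \ref{curve-point-index4} and \ref{local-results-1}.
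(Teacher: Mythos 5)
Your proposal follows essentially the same route as the paper's proof: reduce to the division case with $\mathrm{ind}(D)=8$ via Propositions \ref{index2} and \ref{index4}, use \ref{deg8} and \ref{disc-2dim-second8} to get the single admissible shape, take the maximal order from \ref{maximal-order-2dim} and blow up so that the reduced norms of the diagonal entries have normal-crossing support, then dispatch the closed points via \ref{gla} (split extension), \ref{index4} (index drops), or \ref{blowuppoint-index4}, \ref{curve-point-index4} and \ref{local-results-1} (index stays $8$), exactly as the paper's ``arguing as in the proof of \ref{index4}.'' Your explicit treatment of the ramified quadratic case via \ref{ramified} (forcing $\mathrm{ind}(D\otimes F_{0P})\le 4$) is a clean way to account for why the index-$8$ branch only arises in the unramified setting, which the paper leaves implicit.
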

 
 \begin{proof}  By (\ref{index4}), we assume that   ind($D) > 4$.  
 As in the proof of (\ref{index2}), we assume that $D$ is unramified on $R_0$ except possibly at 
 $(\pi_0)$ and  $(\delta_0)$.   Then,  by (\ref{deg8}, \ref{disc-2dim-second8}),  ind$(D) = 8$ and 
$D\simeq (u_0, u_1) \otimes (v_0, v_1\pi_0) \otimes (w_0, w_1\delta_0) \otimes F$
for some units $u_, v_i, w_i  \in  R_0$. 
 Without loss of generality we assume that 
 $\sigma$ is the tensor product of canonical involutions on
$ (u_0, u_1)$, $(v_0, v_1\pi)$ and $(w_0, w_1\delta)$ and $\tau$. 
Let $\Gamma = R(u_0, u_1) \otimes R(v_0, v_1\pi) \otimes R(w_0, w_1\delta)$. Then 
 $\Gamma$ is a maximal $R$-order of $D$.    We have $h = \langle a_1, \cdots, a_n\rangle$ for some $a_i \in \Gamma$. 
 
 Let $\XX_0 \to Spec(R_0)$ be a sequence of blowups such that  
 the integral closure of $\YY_0$ in $F$ is regular and the support of $Nrd(a_i)$   and ram$_\YY(D)$ is a 
 union of regular curves with normal crossings. 
   By (\ref{finite}), there exists a  finite subset $\PP$ of $Y_0$ such that 
$X(F_{0P} ) \neq \emptyset $ for all $P \not \in \PP_0$. 

Let $P \in \PP_0$.  
Suppose that $F\otimes F_{0P}$ is not a field. Then as in the proof of  (\ref{index2}),
$X(F_{0P}) \neq \emptyset$. 

Suppose that $F\otimes F_{0P}$ is a field. 
If ind$(D \otimes F_{0P}) \leq 4$, then by (\ref{index4}),  
there exists a sequence of blowups $\XX_P$ of Spec$(\OO_P)$ such that 
 for every closed point $Q$ of $\XX_P$,   $X(F_{0Q}) \neq \emptyset$. 
 
 Suppose ind$(D \otimes F_{0P}) >  4$.  Then as above we have  ind$(D \otimes F_{0P}) =  8$.
 Arguing as in the proof of (\ref{index4}), we get that    $X(F_{0P}) \neq \emptyset$. 
  \end{proof}

\section{Main theorem}
\label{main}

In this section we prove the main theorems. 
\begin{theorem} \label{main-theorem1}
 Let $K$ be a complete discretely valued field with valuation ring $T$ and residue field $k$.
 Suppose that char$(k) \neq 2$. Let $F $ be the function field of a smooth projective geometrically 
 integral curve over $K$.   
 Let $A \in {}_2Br(F)$ be a central simple algebra over $F$ with an  involution  $\sigma$ of any kind, $F_0 = F^\sigma$
  and $h$ a hermitian form over $(A, \sigma)$. 
 Suppose that ind$(A) \leq 4$.  Let $G = SU(A, \sigma, h)$ if $\sigma$ is first kind or 
 $U(A,  \sigma, h)$ if $\sigma$ is of second kind.  
  Let $X$ be a projective homogeneous variety under $G$ over $F_0$.
 If $X(F_{0\nu}) \neq \emptyset$ for all divisorial  discrete valuations $\nu$ of $F $, then $X(F_0 ) \neq \emptyset$.  
 \end{theorem}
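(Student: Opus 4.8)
The plan is to reduce to the local results of \S\ref{lgp-local}--\S\ref{blowups} and then to patch. By Morita equivalence (cf. \S\ref{prel}) we may assume that $A = D$ is a central \emph{division} algebra over $F$: replacing $h$ by a Morita-equivalent hermitian form over the underlying division algebra and $X$ by the corresponding projective homogeneous variety changes neither the validity of the hypothesis and conclusion, nor the rationality of $G$ (\cite[Lemma 5]{chernusov}), nor $\mathrm{ind}(A)$; in particular $\mathrm{ind}(D)\leq 4$. Also, since $[F:F_0]\leq 2$, every divisorial discrete valuation $\nu_0$ of $F_0$ is the restriction of a divisorial discrete valuation $\nu$ of $F$ (pass to the normalisation in $F$ of a model of $F_0$), and then $F_{0\nu}=F_{0\nu_0}$; so the hypothesis already gives $X(F_{0\nu_0})\neq\emptyset$ for every divisorial discrete valuation $\nu_0$ of $F_0$.

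\emph{Choice of model and reduction to bad points.} Fix a regular model $\XX$ of $F_0$ with reduced normal-crossings closed fibre. By \ref{finite} applied to $X$, each irreducible component of the closed fibre contains a nonempty open $U$ with $X(F_U)\neq\emptyset$, and there is a finite set $\PP$ of closed points such that $X(F_P)\neq\emptyset$ for every closed point $P\notin\PP$; enlarging $\PP$, we may assume it contains every singular point of the closed fibre and every closed point outside the chosen opens. By the Harbater--Hartmann--Krashen patching theorem (\cite[Theorem 3.7]{HHK-quadratic}), applied to the connected rational group $G$ and the projective homogeneous variety $X$ over the model $\XX$, it therefore suffices to prove $X(F_P)\neq\emptyset$ for the finitely many $P\in\PP$.

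\emph{The bad points.} Fix $P\in\PP$ and set $R_0:=\hat{\OO}_{\XX,P}$, a two-dimensional complete regular local ring with fraction field $F_P$. Any blow-up of $\Spec R_0$ centred over its closed point spreads out to a blow-up of $\XX$ near $P$, so every divisorial discrete valuation of $F_P$ restricts to a divisorial discrete valuation of $F_0$; hence, by the previous paragraph, $X$ acquires a rational point over the completion of $F_P$ at every divisorial discrete valuation of $F_P$. Now $D$ base-changes to an algebra over $F\otimes_{F_0}F_P$ of index at most $4$ carrying the involution induced by $\sigma$ (the case where $F\otimes_{F_0}F_P$ is split is absorbed in the proof of \ref{index4}), so Proposition \ref{index4} applies over $R_0$ and yields a finite sequence of blow-ups $\YY\to\Spec R_0$ such that $X$ has a rational point over the completion of $F_P$ at every closed point of $\YY$. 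Regarding $\YY$ as a model of $F_P$, the opens of its closed fibre also satisfy the required condition (apply \ref{finite} over $F_P$, using the divisorial conditions just established) and all of its closed points are good, so a second application of \cite[Theorem 3.7]{HHK-quadratic} gives $X(F_P)\neq\emptyset$. Since this holds for every $P\in\PP$, patching over $\XX$ yields $X(F_0)\neq\emptyset$.

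\emph{Where the difficulty lies.} Essentially all of the substance is in the preceding sections: the Saltman-style classification of two-torsion division algebras with involution of index at most $4$ over a two-dimensional complete local field (\S\ref{disc}--\S\ref{disc2}), the construction of maximal orders adapted to these normal forms (\S\ref{maximal-order}), the local--global principle for hermitian forms carried by such orders (\S\ref{lgp-local}), and the stability of the whole package under blow-ups (\S\ref{blowups}, culminating in \ref{index4}). In the assembly above the only points requiring care are the standard facts that a regular model with normal-crossings closed fibre exists, that divisorial data on $F_0$ restricts to divisorial data on each $F_P$, and that the Harbater--Hartmann--Krashen patching theorem may be applied both over the local model $\YY$ and over the global model $\XX$; none of these is a genuine obstacle, but they are what glues the local results of \S\ref{lgp-local}--\S\ref{blowups} into the global statement.
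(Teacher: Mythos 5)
Your overall architecture is the same as the paper's (reduce to $D$ division, fix a regular model, isolate a finite bad set $\PP$ via \ref{finite}, handle bad points by blow-ups via \ref{index4}, and patch), but you organize the final patching differently, and that difference opens a genuine gap. You propose to establish $X(F_{0P})\neq\emptyset$ for each $P\in\PP$ \emph{directly}, by regarding the blow-up $\YY\to\Spec R_0$ (where $R_0=\hat{\OO}_{\XX,P}$) as a model of $F_{0P}$ and applying \cite[Theorem 3.7]{HHK-quadratic} together with \ref{finite} over $F_{0P}$. But $F_{0P}=\Frac(\hat{\OO}_{\XX,P})$ is not a semi-global field in the sense of \cite{HHK-quadratic}: the scheme $\YY$ is proper over the two-dimensional complete local ring $R_0$, not projective over a complete DVR, so neither \cite[Theorem 3.7]{HHK-quadratic} nor \ref{finite} (which is stated for function fields of projective curves over a complete DVR) applies as cited. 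Patching and local-global results over $F_{0P}$ do exist (cf.\ \cite{HHK-refinements}, and the paper's own \ref{pgla} which is justified via \cite{parimala2020localglobal}), but you do not invoke them, and the ``second application'' of the cited theorem is not justified.

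The paper sidesteps this entirely and more economically: rather than concluding $X(F_{0P})\neq\emptyset$ for the bad $P$, it spreads out each sequence of blow-ups from \ref{index4} to a new \emph{global} regular model $\XX'$ of $F_0$, over which every closed point $Q$ satisfies $X(F_{0Q})\neq\emptyset$ and every generic point has $X$-points by the divisorial hypothesis; a \emph{single} application of \cite[Theorem 3.7]{HHK-quadratic} over $\XX'$ then yields $X(F_0)\neq\emptyset$. Your argument becomes correct if you replace the ``local patching over $\YY$'' step by this globalization (the closed points of $\YY$ are precisely the new closed points of the global blow-up near $P$, so the conclusions you draw at those points are exactly what is needed), or alternatively if you cite a patching/local-global theorem that is actually valid over $\Frac(\hat{\OO}_{\XX,P})$. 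As written, the local patching step is the one place where the proof does not follow from the references you invoke.
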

 
\begin{proof}
Since $F$ is the function field of a curve over $K$, $F_0$ is also 
the function field of a curve over $K$. 
Let $\XX_0 \to Spec(R_0)$ be a regular proper model of $F_0$  with the closed fibre $X_0$
a union of regular curves with normal crossings (\cite{abhyankar}).  
Let $\eta \in   X_0$ be a  generic point.  Then $\eta$ gives a divisorial discrete valuation $\nu$ of $F $.
Since $X(F_{0\eta}) \neq \emptyset$, by (\cite[5.8]{HHK-torsors}), there exists a nonempty open set $U_\eta$ of the closure of
 $\eta$ in $X_0$ such that 
$X(F_{0U_\eta}) \neq \emptyset$. By shrinking $U_\eta$, we assume that $U_\eta$ does not contain any singular points of $X_0$. 

Let $\PP = X_0 \setminus \cup_\eta U_\eta$.  Then $\PP$ is a finite set of closed points of $X_0$ containing all the singular points of $X_0$.  Let $P$ be a closed point of $X_0$.  Suppose $P \not\in \PP$. Then $P \in U_\eta$ for some 
$\eta$. Since $F_{0U_\eta} \subset F_{0P}$,  $X(F_{0P}) \neq \emptyset$.

Let $P \in \PP$. Since ind$(A) \leq 4$ and $A \in {}_2Br(F)$, by (\ref{index4}), there exists a sequence 
of blowups $\XX_P$ of Spec$(\OO_P)$ such that 
$X(F_{0Q}) \neq \emptyset$ for all closed points of $\XX_P$. Thus replacing $\XX$ by 
these finitely many sequences of blowups at all $P \in \PP$, we assume that 
$X(F_{0Q}) \neq \emptyset$ for all closed points $Q$ of $X_0$.  
 Since for any generic point  $\eta$ of $X_0$,   $X(F_{0\eta}) \neq \emptyset$, 
we have $X(F_{0x}) \neq \emptyset$ for all   points $x  \in X_0$. 
Since $G$ is a connected  rational group (\cite[Lemma 5]{chernusov}), by (\cite[Theorem 3.7]{HHK-quadratic}), we have $X(F) \neq \emptyset$. 
\end{proof}

\begin{theorem}
 \label{main-theorem2}
  Let $K$ be a complete discretely valued field with valuation ring $T$ and residue field $k$.
 Suppose that char$(k) \neq 2$. Let $F $ be the function field of a smooth projective geometrically 
 integral curve over $K$.   
 Let $A$ be a central simple algebra over $F$ with an  $\sigma$ of any kind, $F_0 = F^\sigma$
  and $h$ a hermitian form over $(A, \sigma)$. 
 Suppose that  for  every finite extension $\ell/k$, every element in ${}_2Br(\ell)$ has index at most 2. 
 Let $G = SU(A, \sigma, h)$ if $\sigma$ is first kind or 
 $U(A,  \sigma, h)$ if $\sigma$ is of second kind. 
 Let $X$ be a projective homogeneous variety under $G$ over $F_0$.
 If $X(F_{0\nu}) \neq \emptyset$ for all divisorial  discrete valuations $\nu$ of $F_0 $, then $X(F_0 ) \neq \emptyset$.  
 \end{theorem}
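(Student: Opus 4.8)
The plan is to follow the proof of Theorem \ref{main-theorem1} essentially line by line, the only change being that the local input Proposition \ref{index4} is replaced by its analogue Proposition \ref{index8}, which is precisely the statement valid under the present hypothesis on the residue field.

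First I would record that, since $\sigma$ is an $F/F_0$-involution, $F_0$ is again the function field of a smooth projective geometrically integral curve over $K$: if $\sigma$ is of the first kind then $F_0 = F$, while if $\sigma$ is of the second kind then $F/F_0$ is a separable quadratic extension and $F_0$ is the function field of the quotient curve. By Abhyankar's theorem (\cite{abhyankar}) there is a regular proper model $\XX_0 \to \Spec(T)$ of $F_0$ whose closed fibre $X_0$ is a union of regular curves with normal crossings. For each generic point $\eta$ of $X_0$ the associated discrete valuation is divisorial, hence $X(F_{0\eta}) \neq \emptyset$ by hypothesis; by \cite[Proposition 5.8]{HHK-torsors} there is a nonempty open subset $U_\eta$ of the closure $\overline{\{\eta\}}$ of $\eta$ in $X_0$, which I may shrink so as to avoid the singular locus of $X_0$, with $X(F_{0U_\eta}) \neq \emptyset$. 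Put $\PP = X_0 \setminus \bigcup_{\eta} U_\eta$; this is a finite set of closed points of $X_0$ containing every singular point, and since $F_{0U_\eta} \subset F_{0P}$ one already has $X(F_{0P}) \neq \emptyset$ for every closed point $P \notin \PP$.

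The crux is the finitely many points of $\PP$. For $P \in \PP$ the completed local ring $\hat{\OO}_P$ is a two-dimensional complete regular local ring whose residue field is a finite extension $\ell$ of $k$; as every finite extension of $\ell$ is again a finite extension of $k$, the hypothesis of Proposition \ref{index8} holds at $P$. Applying that proposition — which already incorporates the Morita reduction to a division algebra, the structure results of \S\ref{disc} and \S\ref{disc2}, the explicit maximal orders of \S\ref{maximal-order}, and the local isotropy criterion \ref{local-results-1} — I obtain a sequence of blowups $\XX_P \to \Spec(\OO_P)$ such that $X(F_{0Q}) \neq \emptyset$ for every closed point $Q$ of $\XX_P$. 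Performing these finitely many blowing-ups, I may replace $\XX_0$ by a model on which $X(F_{0Q}) \neq \emptyset$ for every closed point $Q$ of the new closed fibre, while $X(F_{0\eta}) \neq \emptyset$ for each of its generic points; hence $X(F_{0x}) \neq \emptyset$ for all points $x$ of the closed fibre. Since $G(A,\sigma,h)$ is a connected rational linear algebraic group over $F_0$ (\cite[Lemma 5]{chernusov}), the patching theorem \cite[Theorem 3.7]{HHK-quadratic} then yields $X(F_0) \neq \emptyset$.

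The substantive work is thus entirely concentrated in Proposition \ref{index8}, and within it in Proposition \ref{deg8} together with its second-kind counterpart \ref{disc-2dim-second8}: the delicate point, which I expect to be the main obstacle, is to show via Saltman's classification that the index-at-most-$2$ assumption on residue Brauer groups forces a two-torsion division algebra over the fraction field of a two-dimensional complete regular local ring to have index at most $8$, with the rigid normal form $(u_0,u_1)\otimes(v_0,v_1\pi_0)\otimes(w_0,w_1\delta_0)$ when the index is exactly $8$ — rigid enough that the maximal order $R(u_0,u_1)\otimes R(v_0,v_1\pi_0)\otimes R(w_0,w_1\delta_0)$ and the distinguished elements $\pi_D,\delta_D$ of Notation \ref{maximal-order-inv} place one in position to invoke \ref{local-results-1}, and that the behaviour under blowups of \S\ref{blowups} can be controlled. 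Granting these ingredients, the proof above is a formal repetition of that of Theorem \ref{main-theorem1}.
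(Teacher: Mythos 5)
Your proposal is correct and follows essentially the same approach as the paper, which states that Theorem \ref{main-theorem2} is proved by substituting Proposition \ref{index8} for Proposition \ref{index4} in the proof of Theorem \ref{main-theorem1}. You have simply spelled out that substitution in full, including the correct observation that the residue field at a closed point $P$ is a finite extension of $k$ so the hypothesis of \ref{index8} propagates to $\hat{\OO}_P$.
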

 
\begin{proof}   Using (\ref{index8}), the   proof  is similar to the proof of (\ref{main-theorem1}).
\end{proof}

\begin{cor}
\label{cor1}
 Let $K$ be a complete discretely valued field with valuation ring $T$ and residue field $k$.
 Suppose that $k$ is a global field, local field or a $C_2$-field  with char$(k) \neq 2$. 
 Let $F $ be the function field of a smooth projective geometrically 
 integral curve over $K$.   
 Let $A \in {}_2Br(F)$ be a central simple algebra over $F$ with an   involution $\sigma$ of any kind and
 $h$ a hermitian form over $(A, \sigma)$. 
 Let  $F_0 = F^\sigma$, $G = SU(A, \sigma, h)$ if $\sigma$ is first kind and $G = 
 U(A,  \sigma, h)$ if $\sigma$ is of second kind. 
 Let $X$ be a projective homogeneous variety under $G$ over $F_0$.
 If $X(F_{0\nu}) \neq \emptyset$ for all divisorial  discrete valuations $\nu$ of $F_0 $, then $X(F_0 ) \neq \emptyset$.  

 \end{cor}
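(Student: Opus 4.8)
The plan is to deduce Corollary~\ref{cor1} from Theorem~\ref{main-theorem2}, whose conclusion is word-for-word the statement sought here. Thus it suffices to verify the hypothesis of that theorem: for every finite extension $\ell/k$, every class in ${}_2\mathrm{Br}(\ell)$ has index at most $2$. Since a finite extension of a local field is again a local field, a finite extension of a global field is again a global field, and a finite (indeed algebraic) extension of a $C_2$-field is again a $C_2$-field by Tsen--Lang, it is enough to prove the stated index bound for $k$ itself in each of the three cases; recall that $\mathrm{char}(k)\neq 2$ is assumed throughout.

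First I would dispose of the two classical cases. If $k$ is a nonarchimedean local field, then $\mathrm{Br}(k)\cong\Q/\Z$ with the index of a class equal to its period, so a $2$-torsion class has index dividing $2$; the archimedean possibilities $\mathrm{Br}(\mathbb{R})\cong\Z/2$ and $\mathrm{Br}(\C)=0$ are immediate. If $k$ is a global field, then period equals index in $\mathrm{Br}(k)$ by the Albert--Brauer--Hasse--Noether theorem, so again every $2$-torsion class has index at most $2$.

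The substantive case is that of a $C_2$-field, and here I would argue as follows. Over a $C_2$-field every quadratic form in more than $4$ variables is isotropic; in particular no $6$-dimensional form, hence no Albert form attached to a pair of quaternion algebras, is anisotropic, so by Albert's theorem no biquaternion division algebra exists over $k$, i.e.\ the tensor product of any two quaternion $k$-algebras has index at most $2$. Now let $D\in{}_2\mathrm{Br}(k)$. By Merkurjev's theorem $D$ is Brauer-equivalent to a tensor product $Q_1\otimes\cdots\otimes Q_r$ of quaternion algebras; since $Q_1\otimes Q_2$ has index at most $2$ it is Brauer-equivalent to a single (possibly split) quaternion algebra $Q'$, so $D$ is Brauer-equivalent to $Q'\otimes Q_3\otimes\cdots\otimes Q_r$, a product of $r-1$ quaternion algebras. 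Iterating, $D$ is Brauer-equivalent to a quaternion algebra and therefore has index at most $2$.

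Having checked the hypothesis of Theorem~\ref{main-theorem2} in all three cases, that theorem yields $X(F_0)\neq\emptyset$, which is the assertion of the corollary. The only step requiring more than a citation is the $C_2$ case, and there the work lies entirely in low-dimensional quadratic form theory together with the structure of $2$-torsion Brauer classes; none of the patching or blow-up machinery of \S\ref{main} has to be revisited.
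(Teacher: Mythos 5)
Your proof is correct and takes exactly the paper's approach: reduce to Theorem~\ref{main-theorem2} by checking its index hypothesis, using that finite extensions of global, local, and $C_2$-fields remain of the same type. Where the paper simply cites standard references for the fact that $2$-torsion Brauer classes over such fields have index at most $2$, you supply the underlying arguments (structure of local and global Brauer groups, and Merkurjev's theorem together with Albert's criterion in the $C_2$ case), but the logical route is identical.
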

 
 \begin{proof} Suppose $k$ is a global field or a local field or a $C_2$-field, then  any finite extension $\ell$ of $k$ is
 also same type and hence every element in 
 $_2Br(k)$ is of index at most 2 (\cite[Chapter 10, 2.3(vi)]{scharlau}, \cite[Chapter 10, 2.2(i)]{scharlau}, \cite[Theorem 4.8]{lam}).  Hence the corollary follows from (\ref{main-theorem2}).  
 \end{proof}

\begin{cor}
\label{springer}
 Let $K$ be a complete discretely valued field with valuation ring $T$ and residue field $k$.
 Suppose that  $k$ is a local field   with  the characteristic of the  residue field   not equal  2. 
 Let $F $ be the function field of a smooth projective geometrically 
 integral curve over $K$.   
 Let $A \in {}_2Br(F)$ be a central simple algebra over $F$ with an   involution $\sigma$   and
 $h$ a hermitian form over $(A, \sigma)$ and $F =  F^\sigma$.   
If $h \otimes L$ is isotropic for some odd degree extension $L/F_0$, then $h$ is isotropic. 
 \end{cor}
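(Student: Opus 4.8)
The plan is to convert the isotropy of $h$ into the existence of a rational point on a projective homogeneous variety, deduce the result from the local--global principle \ref{cor1}, and reduce the remaining local statement to the classical Springer theorem for quadratic forms by means of the structural results of \S\S\ref{disc}--\ref{disc2}. First I would use Morita equivalence (as in \S\ref{prel}) to replace $(A,\sigma,h)$ by the associated triple $(D,\sigma_0,h_0)$ over the underlying division algebra $D$ of $A$, which affects neither isotropy nor the relevant projective homogeneous varieties over any extension of $F_0$. Let $X$ be the projective homogeneous variety under $G=G(D,\sigma_0,h_0)$ over $F_0$ whose points over an extension $E/F_0$ correspond to the isotropic lines of $h_0\otimes E$; thus $h$ is isotropic over $E$ if and only if $X(E)\neq\emptyset$. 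We may assume $L/F_0$ is separable; the hypothesis gives $X(L)\neq\emptyset$, and the goal is $X(F_0)\neq\emptyset$. Since $k$ is a local field and $F_0$ is, like $F$, the function field of a curve over $K$, \ref{cor1} applies, so it suffices to prove $X(F_{0\nu})\neq\emptyset$ for every divisorial discrete valuation $\nu$ of $F_0$.

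Fix such a $\nu$. Then $L\otimes_{F_0}F_{0\nu}\cong\prod_{w\mid\nu}L_w$, a finite product of finite field extensions of $F_{0\nu}$ with $\sum_w[L_w:F_{0\nu}]=[L:F_0]$; since $[L:F_0]$ is odd, at least one $L_w/F_{0\nu}$ has odd degree, and $X(L_w)\supseteq X(L)\neq\emptyset$. Everything thus comes down to a Springer-type statement over the complete discretely valued field $F_{0\nu}$: if $X$ has a point over an odd-degree extension of $F_{0\nu}$, then $X(F_{0\nu})\neq\emptyset$. When $F\otimes_{F_0}F_{0\nu}$ is not a field this is immediate, since isotropy then becomes vacuous and $X$ has an obvious $F_{0\nu}$-point, exactly as in the proofs of \S\ref{blowups}.

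In the remaining case I would pass to residue forms with respect to the reduced-norm valuation on $D\otimes F_{0\nu}$: anisotropy of $h_0\otimes F_{0\nu}$ is controlled by anisotropy of its residue hermitian forms over the residue division algebra with the induced involution, and an odd-degree extension $L_w/F_{0\nu}$, split into an unramified part (an odd-degree extension of the residue field $\kappa(\nu)$) and a totally ramified part of odd ramification index that leaves the residue data essentially unchanged, reduces the assertion to the Springer theorem for hermitian forms over $\kappa(\nu)$. Since $\kappa(\nu)$ is either the function field of a curve over the local field $k$ or a complete discretely valued field whose residue field is a finite extension of $k$, iterating this reduction and using the explicit classifications of \S\S\ref{disc}--\ref{disc2} brings us to a situation in which the underlying division algebra is split or is a quaternion algebra with its canonical involution; there $h_0=\langle a_1,\dots,a_n\rangle$ is already a quadratic form or corresponds to the quadratic form $\langle a_1,\dots,a_n\rangle\otimes N_D$ with $N_D$ the reduced-norm form, and the classical Springer theorem for quadratic forms over a field of characteristic $\neq2$ concludes. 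Hence $X(F_{0\nu})\neq\emptyset$ for all $\nu$, and then $X(F_0)\neq\emptyset$ by \ref{cor1}, i.e.\ $h$ is isotropic.

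The main obstacle is this last step. One has to make the residue-form calculus over $F_{0\nu}$ precise while controlling how the underlying division algebra and the kind of its involution degenerate, both under completion and under the ramified part of $L_w/F_{0\nu}$ (the type of $\sigma$ can switch according to whether $F\otimes_{F_0}F_{0\nu}/F_{0\nu}$ is split, unramified, or ramified), and one has to check that the iteration terminates in the split-or-quaternion case rather than, say, at a biquaternion algebra over a function field where quadratic Springer is not directly available. Because $D\otimes F_{0\nu}$ can have index as large as $8$, the explicit structural descriptions of \S\S\ref{disc}--\ref{disc2}, together with the maximal-order constructions of \S\ref{maximal-order}, are precisely what make this control possible.
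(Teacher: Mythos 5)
The overall architecture you propose --- reduce to the local statement at each divisorial $\nu$ via \ref{cor1}, then prove a Springer-type theorem over $F_{0\nu}$ --- does match what the paper is doing, and your observation that for each $\nu$ one can choose a completion $L_w$ of $L$ with odd $[L_w:F_{0\nu}]$ is also the right first step. However, the paper does not then redo the local Springer analysis from scratch: it observes that the residue field $\kappa(\nu)$ is either a finite extension of $K$ (hence a complete discretely valued field whose residue field is a finite extension of the local field $k$) or the function field of a curve over a finite extension of $k$, and in both cases it invokes Wu's already-established Springer-type results over such fields (\cite[Lemma 5.1, Lemma 5.6, Theorem 4.4]{wu}) together with the argument in the proof of \cite[Theorem 5.8]{wu}. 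Your plan replaces that citation by a residue-form calculus intended to iterate down to a split or quaternion algebra where quadratic Springer applies.

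That last step is where your proposal has a real gap, and you yourself flag it correctly. After taking residues at $\nu$ you land on $\kappa(\nu)$, which is either a two-dimensional local field or a $p$-adic curve function field; over such a field the division algebra underlying $D\otimes F_{0\nu}$ can still have index $4$ or $8$, and there is no further discrete valuation whose residue field is simpler in a way that eventually forces you into the quaternion or split case. The reduced-norm-form trick ($h_0 \leftrightarrow \langle a_1,\dots,a_n\rangle\otimes N_D$) only converts a hermitian form to a quadratic form when $D$ is a quaternion algebra with canonical involution; for biquaternion $D$ (or index $8$) there is no such functorial quadratic-form avatar, so quadratic Springer is not available and the iteration does not terminate as you hope. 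What actually closes this gap is precisely a hermitian Springer theorem over $\kappa(\nu)$ for division algebras of index up to $8$, which is the content of Wu's Theorem 4.4 and Lemma 5.6; without citing those (or reproving them, which would be a substantially longer argument using the structural results of \S\S\ref{disc}--\ref{disc2} and \S\ref{maximal-order} in an essential way, not merely as bookkeeping), the proposal is incomplete. So: same outer strategy, but the inner local step, which carries most of the weight, is left open in your proposal and handled in the paper by an appeal to Wu.
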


\begin{proof}   
Let $\nu$ be a  divisorial discrete valuation of $F_0$. 
Then the residue field $\kappa(\nu)$ is either a finite extension of $K$ or a function field of a curve over 
a finite extension of $k$ (\cite[Theorem 8.1]{parimalaHassequadraticfunctionfields}). 

Let $\kappa'$ be an extension of $\kappa(\nu)$ with $[\kappa' : \kappa(\nu)] \leq 2$.  
Let $D$ be a central simple algebra over $\kappa'$ with a $\kappa'/\kappa(\nu')$-involution $\theta$ and 
$f$ an hermitian form over $(D, \theta)$.
Let $\ell$ be an extension of $\kappa(\nu)$ of  odd degree. Suppose that $f$ is isotropic over 
$\ell$.

Suppose  $\kappa(\nu)$ is  a finite extension of $K$.   Since 
$K$ is a complete discretely valued field with residue field $k$, $\kappa(\nu)$ 
is a compete discretely valued field with residue field a finite extension of $k$. 
Then by  (\cite[Lemma 5.6]{wu} and  \cite[Lemma 5.1]{wu}), we get that 
$f  $ is isotropic. 

 Suppose the residue field $\kappa(\nu)$ is  a function field of a curve over 
a finite extension of $k$. 
Since $k$ is local field, by (\cite[Lemma 5.1]{wu}) and  (\cite[Theorem 4.4]{wu}), 
$ f$ is isotropic.

Suppose that $h\otimes L$ is isotropic for some odd degree extension $L$ of $F_0$.
Then,  as in the proof of (\cite[Theorem 5.8]{wu}) and using (\ref{cor1}), we get that $h$ is isotropic.

\end{proof}

\printbibliography

\end{document}